\documentclass{amsart}

\usepackage[all]{xy}

\usepackage{amsmath,amssymb,tabularray,verbatim}
\usepackage{color}

\numberwithin{equation}{section}

\theoremstyle{plain}
\newtheorem{theorem}{Theorem}[section]
\newtheorem{corollary}[theorem]{Corollary}
\newtheorem{lemma}[theorem]{Lemma}
\newtheorem{proposition}[theorem]{Proposition}
\theoremstyle{remark}
\newtheorem{remark}[theorem]{Remark}
\theoremstyle{definition}
\newtheorem{definition}[theorem]{Definition}

\makeatletter

\def\R{\mathbb{R}}
\def\C{\mathbb{C}}
\def\Z{\mathbb{Z}}

\newcommand{\lie}[1]{\mathfrak{#1}}
\newcommand{\lier}[1]{\mathfrak{#1}_{0}}

\newcommand\stWh[3]{I_{#1,#2}^{#3}}
\newcommand\stWhmg[3]{I_{#1,#2}^{#3}}
\def\WhL{\stWh{\eta}{\Lambda}{\circ}}
\def\WhLmg{\stWhmg{\eta}{\Lambda}{}}
\def\WhLast{\stWh{\eta}{\Lambda}{\ast}}
\newcommand\indWh[2]{Y_{#1,#2}}
\def\indWhL{\indWh{\Lambda}{\eta}}
\newcommand\duWh[2]{\Pi_{\lie{g},\{e\}}^{\gK}(Y_{#1,#2})}
\def\duWhL{\duWh{\Lambda}{\eta}}
\def\gK{\lie{g},K}
\def\brgK{(\gK)}
\def\kK{\lie{k},K}
\def\Ug{U(\lie{g})}
\def\Zg{Z(\lie{g})}
\def\Un{U(\lie{n})}
\def\Uk{U(\lie{k})}

\def\soc{\mathrm{soc}\,}

\def\Hom{\mathrm{Hom}}
\def\Ext{\mathrm{Ext}}

\def\Obj{\mathrm{Obj}}
\def\Mor{\mathrm{Mor}}

\def\tr{\mathrm{tr}}

\def\Ind{\mathrm{Ind}}
\def\Ad{\mathrm{Ad}}
\def\ad{\mathrm{ad}}

\def\pro{\mathrm{pro}}
\def\res{\mathrm{res}}
\def\ch{\mathrm{ch}}

\def\Dim{\mathrm{Dim}}

\def\WF{\mathrm{WF}}
\def\Wh{\mathrm{Wh}}

\def\Real{\mathrm{Re}}

\allowdisplaybreaks[3]

\title[Self-Adjoint Properties of Whittaker modules]
{On the Self-adjoint properties of the standard Whittaker
  $\brgK$-modules}
\author{Kenji Taniguchi}
\address{
Department of Mathematics, 
Aoyama Gakuin University, 
5-10-1, Fuchinobe, Chuo-ku, Sagamihara, Kanagawa 252-5258, Japan. }
\email{taniken@math.aoyama.ac.jp}
\thanks{2020 {\it Mathematics subject Classification}. 
Primary 22E46,22E47}
\keywords{Whittaker module, injective $\brgK$-module, split group}

\begin{document}

\begin{abstract}
The structure of the standard Whittaker $\brgK$-module is examined in
the case when the group in question is a real split reductive linear Lie group. 
This module is an injective object in the category of Harish-Chandra
$\brgK$-modules which admit a fixed infinitesimal character. 
The global character of this module is determined. 
The main theorem of this paper is that it has a self-adjoint structure. 
Also obtained are the explicit socle filtrations of the standard
Whittaker $\brgK$-modules for the rank two split groups $SL(3,\R)$,
$Sp(2,\R)$ and $G_{2}$(split). 
\end{abstract}

\maketitle

\section{Introduction}
\label{section:introduction}

Let $G$ be a real reductive linear Lie group in the sense 
of \cite{Vogan Green} 
and $G=KAN$ be an Iwasawa decomposition of it. 
Let $\eta: N \longrightarrow \C^{\times}$ be a unitary character of
$N$. 
We assume $\eta$ is non-degenerate, i.e. it is
non-trivial on every root space corresponding to a simple root of
$\Sigma(\lier{n}, \lier{a})$. 
Define 
\begin{equation}\label{eq:space of Whittaker functions}
C^{\infty}(G/N; \eta) 
:= \{f: G \overset{C^{\infty}}{\longrightarrow} \C \,|\, 
f(gn) = \eta(n)^{-1} f(g), \enskip g \in G, n \in N\}
\end{equation}
and call it the space of Whittaker functions on $G$. 
This is a representation space of
$G$ by the left translation, which is denoted by $\ell$. 
Let $C^{\infty}(G/N;\eta)_{K}$ be the subspace of
$C^{\infty}(G/N;\eta)$ consisting of $K$-finite vectors.

As for the subrepresentations of this space, 
many deep and interesting results are known, 
called the theory of Whittaker models. 
For example, an irreducible $\brgK$-module $X$ is a submodule of 
$C^{\infty}(G/N;\eta)_{K}$ if and only if $X$ is quasi-large 
(cf. Theorem~\ref{theorem:irred sub of WhL}). 
Here, $X$ is called quasi-large if the Gelfand-Kirillov dimension 
(\cite{Vogan GK-dim}) of it is equal to the dimension of $N$. 
If the group $G$ is quasi-split, then $X$ is quasi-large if and only
if it is {\it large} in the sense of \cite{Vogan GK-dim}. 
In this case, $X$ is large if and only if the $\tau$-invariant 
of it is empty. 

On the other hand, there are not so many results on the structure of
the whole space. 
Since the space $C^{\infty}(G/N; \eta)_{K}$ is too large to analyze, 
we need to cut off a submodule of suitable size from it. 
Consider the subspace of $C^{\infty}(G/N; \eta)_{K}$ consisting of
those functions $f$ which satisfy the following conditions: 
\begin{enumerate}
\item
$f$ is a joint eigenfunction of $\Zg$ (the center of
  the universal enveloping algebra $\Ug$) 
with infinitesimal character  
$\chi_{\Lambda}$: 
$\ell(z) f = \chi_{\Lambda}(z) f$, $z \in \Zg$. 
\item
$f$ is of moderate growth (\cite{W}).  
\end{enumerate}
Denote by $\WhL$ and $\WhLmg$ the subspaces consisting of 
$f \in C^{\infty}(G/N; \eta)_{K}$ satisfying (1) and (1), (2),
respectively. 
If we want to specify the group $G$, we also denote them by 
$\stWh{\eta}{\Lambda}{G}$ and $\stWh{\eta}{\Lambda}{G, \circ}$,
respectively. 
Apparently, $K$ and $\lie{g}$ act on these spaces by the left
translation $\ell$. 
We call these modules the {\it standard Whittaker $\brgK$-modules} 
(actually, this definition is slightly different from that 
in \cite{Taniguchi 2013}). 
In \cite{Taniguchi 2013}, the author examined basic properties of
these modules. 
The main results of \cite{Taniguchi 2013} are 
(i) these modules are $K$-admissible and then have finite
length \cite[Corollary~2.4]{Taniguchi 2013}), 
(ii) if the infinitesimal character $\Lambda$ is {\it generic}, i.e. 
every irreducible $\brgK$-module which admits the infinitesimal
character $\Lambda$ is a principal series module, 
then 
$\WhL$ and $\WhLmg$ are
completely reducible, and 
(iii) the socle filtrations of these modules 
in the case when $G = U(n,1)$ and $\Lambda$ is non-singular integral 
are obtained.

The author investigated the socle filtrations of 
such modules in the case when $\Lambda$ is non-singular integral and 
$G = SL(3,\R)$ or $Sp(2,\R)$. 
The results are 
\eqref{eq:socle filtration of E(X(gamma_0)), PSU(2,1)}, 
\eqref{eq:socle filtration of E(X(gamma_0)), PSO(3,2)} and 
\eqref{eq:socle filtration of E(X(gamma_0')), PSO(4,1)} of this
paper. 
It should be noted that the method used at that time was direct
calculation of Whittaker functions, 
so it is quite different from the method used in this paper. 
The first observation of this investigation is that 
the global character of $\WhLmg$, i.e. the information of the
composition factors of it, is closely related to that of
the principal series module with infinitesimal character $\Lambda$. 
When $G$ is a connected real split semisimple Lie group with finite
center, 
the global character of $\WhL$ had been already obtained by 
Matumoto (\cite{M0}). 
From his result, we obtain the global character of the module
$\WhLmg$ for (not necessarily connected) general real split 
reductive linear Lie group. 
For simplicity, we abbreviate ``real split reductive
linear Lie group'' to ``split group'' from now on. 
Let $\ch V$ be the global character of a  $\brgK$-module $V$. 
Let $W$ be the little Weyl group $W(\lier{g}, \lier{a})$ and $|W|$ its
order. 
The result is as follows: 
\begin{theorem}{\rm (Matumoto \cite{M0} and 
Theorem~\ref{theorem:character of WhLmg})} 
Suppose $G$ is a split group. 
The global characters of $\WhL$ and $\WhLmg$ are given by 
\begin{align*}
& 
\ch \WhL 
= 
|W|\, \ch (\Ind_{AN}^{G}(e^{\Lambda+\rho} \otimes 1_{N})_{K}), 
&
&
\ch \WhLmg 
= 
\ch(\Ind_{AN}^{G}(e^{\Lambda+\rho} \otimes 1_{N})_{K}), 
\end{align*}
respectively. 
\end{theorem}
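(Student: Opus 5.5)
The first formula is Matumoto's theorem for connected split semisimple groups with finite center, so the work is twofold: extend it to general (possibly disconnected, reductive) split groups $G$, and then deduce the moderate-growth version, which has exactly one copy of the principal series instead of $|W|$.

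\textbf{Reduction to Matumoto's setting.} Write $G = G^{\circ}\cdot M$, where $M = Z_{K}(\lier{a})$ is a finite abelian group meeting every component in the split case. The space $C^{\infty}(G/N;\eta)_{K}$ decomposes as a $G^{\circ}$-module into copies indexed by $G/G^{\circ}$. Since $M$ normalizes $N$, the translate of $\eta$ by $m\in M$ is again non-degenerate and $\Ad(M)$-conjugate to $\eta$; the infinitesimal character is preserved (up to the $W$-action, which does not change $\chi_\Lambda$). Hence, as a $(\lie{g},K\cap G^\circ)$-module, $\WhL$ is induced from the corresponding module for $G^\circ$, and the same holds for the principal series $\Ind_{AN}^{G}(e^{\Lambda+\rho}\otimes 1_N)$, since $AN\subset G^\circ$. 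Characters are compatible with this induction, so Matumoto's identity for $G^\circ$ (after passing to the finite-center quotient and adjusting the center by central characters) gives the first formula for $G$.

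\textbf{The moderate-growth part.} To get the second formula I would use the asymptotic (Jacquet-type) expansion of Whittaker functions along $A$. By the Iwasawa decomposition, $f\in \WhL$ is determined by its restriction to $A$ together with its $K$-type, and this restriction satisfies a holonomic system (a Toda-type system) of rank $|W|$ with regular singularity at infinity in the negative chamber. Near infinity in the positive chamber, the $\eta$-twisting produces solutions of exponential growth or exponential decay. Matumoto's count $|W|$ corresponds to the $|W|$-dimensional local solution space. Moderate growth singles out exactly the solutions with no exponentially increasing terms; for each $K$-type, standard results on confluent systems (as in Hashizume/Kostant for the class-one case) give that this subspace has $1/|W|$ of the dimension. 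Concretely, I would realize $\WhLmg$ as the image of the Jacquet integral map from the principal series of the contragredient/dual parameter, and, for generic $\Lambda$, prove that this image is isomorphic to the principal series (Jacquet integrals converge and are injective). This yields equality of $K$-multiplicities, hence of characters, for generic $\Lambda$. The equality then extends to all $\Lambda$ by a coherent-continuation or translation-functor argument: both sides are coherent families in $\Lambda$, since translation functors preserve moderate growth and $\eta$-equivariance, and exactness on the Whittaker side follows from the injectivity result in \cite{Taniguchi 2013}.

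\textbf{Main obstacle.} The hardest step is the last one: showing that $\Lambda\mapsto \ch\WhLmg$ behaves as a coherent family, and in particular that the moderate-growth condition is compatible with translation at singular or integral $\Lambda$ without loss of multiplicity. I would first try to prove that $K$-type multiplicities of $\WhLmg$ are independent of $\Lambda$, via the rank count of decaying solutions, which is uniform in $\Lambda$. Combined with the generic case, this suffices, because two coherent-type families with equal $K$-characters and equal generic characters agree.
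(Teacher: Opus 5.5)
Your reduction of the first formula to Matumoto's connected, finite-center case is fine; it is essentially Remark~\ref{remark:generalization to our groups}. The moderate-growth formula, however, has a genuine gap in two places.

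First, you assert that for every $K$-type $\delta$ the moderate-growth solutions make up exactly $1/|W|$ of the $|W|\dim\delta$-dimensional solution space. This is not something you can quote. It is known for the class-one $K$-type, where the radial system is the scalar Toda system. For general $\delta$ you have a vector-valued system with an irregular singularity, and you give no argument counting its non-increasing solutions. That count is equivalent to $\WhLmg|_{K}\simeq C(K)$, i.e.\ to the $K$-type content of the theorem itself. Your generic step silently relies on it as well, unless you import the complete reducibility result of \cite{Taniguchi 2013}, as the paper does in Step~II.

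Second, the passage from generic to arbitrary $\Lambda$ cannot work. Translation functors never leave the coset $\Lambda+\mathcal{L}$. By the translation principle, all regular parameters of one coset have equivalent categories, with principal series corresponding to principal series. So a coset containing a non-generic regular parameter (e.g.\ any integral one) contains no generic regular parameter, and nothing can be transported from the generic case. Nor is $\Lambda\mapsto\ch\WhLmg$ known to be coherent:
\begin{itemize}
\item tensoring with a finite-dimensional $F$ turns $\eta$-equivariance into mere $\eta$-finiteness;
\item translating off a wall yields only a generalized infinitesimal character (compare Lemma~\ref{lemma:translation of duWhL from wall}, where even for the dual module one must cut down to the $\chi_{\Lambda'}$-eigenspace);
\item the only available statement, Theorem~\ref{theorem:behavior under the translation functors}, goes toward the wall and rests on injectivity (Corollary~\ref{corollary:injectivity of Wh modules}, from Casselman's exactness, not \cite{Taniguchi 2013}) plus a comparison of socles.
\end{itemize}
Your closing principle about ``coherent-type families'' therefore has nothing to apply to.

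The idea you are missing is the paper's purely algebraic upper bound.
\begin{itemize}
\item For $G=G_{\max}$, Kostant's Theorems~K and~L (extended to split groups) give $\soc\WhL\simeq(\soc\WhLmg)^{\oplus|W|}$. Both modules are injective in $\mathcal{H}_{G}[\Lambda]^{(1)}$, and an injective object is determined by its socle (Proposition~\ref{proposition:decomp of injective}). Hence $\WhL\simeq(\WhLmg)^{\oplus|W|}$, and Matumoto's formula gives $\ch\WhLmg=\ch\Ind_{AN}^{G}(e^{\Lambda+\rho}\otimes 1_{N})_{K}$ with no asymptotic analysis at all.
\item For general $G$ and regular $\Lambda$, restriction from a type II envelope $G^{+}$ gives $\sum_{i}\ch I^{G}_{y_{i}\cdot\eta,\Lambda}=l\cdot\ch\Ind_{AN}^{G}(e^{\Lambda+\rho}\otimes 1_{N})_{K}$. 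Proposition~\ref{proposition:analy. conti of Jacquet}, applied to Jacquet integrals deformed along $\Lambda+t\rho$, produces inside each $I^{G}_{y_{i}\cdot\eta,\Lambda}$ a submodule with the principal series character. These lower bounds and the sum identity force equality.
\item Only afterwards is translation used, and only toward the wall.
\end{itemize}
Note that it is this $t$-deformation of Jacquet integrals, not a translation functor, that links special parameters to generic ones. If you could actually prove your per-$K$-type count, combining it with the submodule $L^{\infty}$ of Proposition~\ref{proposition:analy. conti of Jacquet} would give a legitimate alternative proof for regular $\Lambda$, since an inclusion of admissible modules with equal $K$-multiplicities is an equality.
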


Another observation of the author's is that there seems to be 
a ``self-adjoint'' structure in the socle filtrations of $\WhL$ and
$\WhLmg$. 
The main purpose of this paper is to prove such self-adjoint
properties in the case when $G$ is a general split group. 
The result is as follows: 
\begin{theorem}
\label{theorem=7.4}
{\rm (Theorem~\ref{theorem:2nd duality theorem})} 
Suppose $G$ is a split group. 
Then 
\[
(\WhL)^{c} 
\simeq 
\stWh{\eta}{-\Lambda}{\circ}. 
\]
Here, $(\ast)^{c}$ denotes the contragredient $\brgK$-module of
$\ast$. 
If moreover $G=G_{\max}$, then 
\[
(\WhLmg)^{c} 
\simeq 
\stWh{\eta}{-\Lambda}{}. 
\]
\end{theorem}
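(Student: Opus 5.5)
The plan is to use the injectivity of the standard Whittaker modules, together with a duality between injective and projective objects given by the contragredient functor.

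Write $\mathcal{C}_{\Lambda}$ for the category of Harish-Chandra $\brgK$-modules with generalized (or fixed) infinitesimal character $\chi_{\Lambda}$. The abstract states that $\WhLmg$ is injective in the relevant category. The contragredient functor $V\mapsto V^{c}$ is an exact contravariant equivalence from $\mathcal{C}_{\Lambda}$ to $\mathcal{C}_{-\Lambda}$, since $\chi_{\Lambda}$ goes to $\chi_{-\Lambda}$ under the antipode, up to the usual Weyl group twist. It therefore sends injectives to projectives.

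So $(\WhL)^{c}$ is projective in $\mathcal{C}_{-\Lambda}$. To show it is isomorphic to $\stWh{\eta}{-\Lambda}{\circ}$, I will show that $\stWh{\eta}{-\Lambda}{\circ}$ is \emph{also} projective, with the same multiplicities of projective covers as $(\WhL)^{c}$. An injective-and-projective module in a finite-length category is determined up to isomorphism by its socle, i.e.\ by which indecomposable injective hulls occur and how often.

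The key steps, in order, are as follows.

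\textbf{Step 1: decomposition via Whittaker models.} Decompose $\WhL$ as $\bigoplus_{X} E(X)^{m_X}$. Here $X$ runs over irreducible quasi-large modules with infinitesimal character $\Lambda$, and $E(X)$ is the injective hull. By Theorem~\ref{theorem:irred sub of WhL} the socle consists exactly of such $X$, and $m_X=\dim \Hom(X,\WhL)$. This is the dimension of the space of Whittaker vectors, which for split groups and large $X$ is computed by Kostant/Matumoto: it equals $|W|$ times the appropriate count for $\WhL$, or $1$ for the moderate-growth version.

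\textbf{Step 2: self-duality of $E(X)$.} Show that each $E(X)^{c}$ is isomorphic to $E(X^{c})$, where $E(X^{c})$ is the injective hull of the large module $X^{c}$ with infinitesimal character $-\Lambda$. The natural route is to prove that $E(X)$ is also the projective cover of $X$, i.e.\ $\mathrm{top}\,E(X)\simeq X$. For this I would use the global character formula above: by Theorem~\ref{theorem:character of WhLmg}, $\ch\WhLmg$ equals the principal series character. By BGG-type reciprocity in $\mathcal{C}_\Lambda$ (Casian's or Beilinson--Bernstein localization to twisted $\mathcal{D}$-modules on the flag variety), the sum of projective covers of the large modules has that same character. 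Comparing characters then forces the injective hulls and projective covers of the large irreducibles to coincide.

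\textbf{Step 3: matching multiplicities.} Since $X\mapsto X^{c}$ preserves largeness (Gelfand--Kirillov dimension is preserved), and $m_X=m_{X^{c}}$ (the Whittaker vector dimension is invariant under the duality, using $\eta$ versus $\bar\eta=\eta^{-1}$ conjugate under $A$ for split groups), we get
\[
(\WhL)^{c}\simeq\bigoplus E(X^{c})^{m_X}\simeq \stWh{\eta}{-\Lambda}{\circ}.
\]
For $\WhLmg$ the same argument applies. The hypothesis $G=G_{\max}$ is needed so that every large module with infinitesimal character $\Lambda$ has a one-dimensional Whittaker space, making the multiplicity pattern duality-invariant. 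For general split $G$ the component group permutes the characters $\eta$, and the count would pair $\eta$ with a different character.

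\textbf{Main obstacle.} I expect Step 2, that $E(X)$ is simultaneously a projective cover of $X$, to be the hardest. My first attempt is via the $\mathcal{D}$-module picture: $\WhL$ should correspond to a tilting or big projective object (Whittaker/Kostant functor), and Verdier duality exchanges $\Lambda$ with $-\Lambda$. Failing that, I would compute $\Hom(\WhL,Y)$ for irreducible $Y$ using Frobenius reciprocity into principal series, and deduce the top by comparison with the character formula.
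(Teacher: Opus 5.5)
\textbf{The gap is Step~2.} In the generality of the theorem it is not merely unproved but false. You want $E(X)^{c}\simeq E(X^{c})$, i.e.\ that the top of $E(X)$ is $X$.

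Take $G=SL(2,\R)$ and $\Lambda=\rho$. By Theorem~\ref{theorem:character of WhLmg}, $E(D_{+})$ has composition factors $D_{+},F,D_{-}$. Since $\Ext^{1}(D_{-},D_{+})=0$, it is uniserial with socle $D_{+}$, middle $F$ and top $D_{-}$. Hence $E(D_{+})^{c}\simeq E(D_{+})\not\simeq E(D_{-})=E(D_{+}^{c})$. The paper's $Sp(2,\R)$ computation shows the same thing: $E(\overline{X}(\gamma_{0}))$ has top $\overline{X}(\gamma_{1})\simeq\overline{X}(\gamma_{0})^{c}$.

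What is true is $E(X)^{c}\simeq E(\widetilde{X}^{c})$ for a permutation $X\mapsto\widetilde{X}$ of the large modules. This permutation is the identity when $G=G_{\max}$, but not in general. So for non-$G_{\max}$ groups your Step~3 rests on a false isomorphism. You are missing the paper's reduction: prove the $G_{\max}$ case, then descend from a type II envelope $G^{+}$ via $\res_{\lie{g},K^{+}}^{\gK}\stWh{\eta}{\Lambda}{G^{+},\circ}\simeq(\stWh{\eta}{\Lambda}{G,\circ})^{\oplus l}$ and Krull--Schmidt.

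Your proposed mechanism for Step~2 also cannot work, even where the claim holds. Characters do not detect tops: $E(D_{+})$ and $E(D_{-})$ above have the same character, so no character identity can force $E(X)\simeq P(X)$. Moreover, the BGG reciprocity you invoke is not available in $\mathcal{H}_{G}[\Lambda]^{(1)}$. Here $P(D_{-})=E(D_{+})$ has the character of a single principal series, so it has no standard filtration with $D_{-}$ on top.

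\textbf{What is missing even for $G=G_{\max}$.} You do not need tops at all. You need injectivity of $(\WhL)^{c}$, and that comes from computing its \emph{socle}; this is the real content absent from your plan.

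The paper realizes $\WhL\simeq\Gamma_{\lie{g},\{e\}}^{\gK}(\Hom_{\Zg\otimes\Un}(\Ug,\C_{\Lambda,\eta}))$. By Zuckerman--Bernstein duality, $(\WhL)^{c}\simeq\Pi_{\lie{g},\{e\}}^{\gK}(Y_{-\Lambda,-\eta})$, where $Y_{-\Lambda,-\eta}$ is Kostant's irreducible Whittaker module. It then shows two facts:
\begin{itemize}
\item every irreducible submodule of $\Pi_{\lie{g},\{e\}}^{\gK}(Y_{\Lambda,\eta})$ is large;
\item for $G=G_{\max}$, each large irreducible embeds with multiplicity exactly $|W|$.
\end{itemize}
The tools are Kostant's theory, translation functors, and the Goodman--Wallach leading term of the Jacquet integral composed with Matumoto's boundary value map.

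Counting summands then forces each $E(X)^{c}$ to have a simple large socle $X''$. Injectivity of $E(X'')$ gives an embedding $E(X)^{c}\hookrightarrow E(X'')$, which is an isomorphism by the $K$-type count of Corollary~\ref{corollary:character of WhL}. So $(\WhL)^{c}$ is injective with the same socle as $\stWh{\eta}{-\Lambda}{\circ}$. This is your ``determined by the socle'' endgame, applied on the correct side, and it uses only the bijection $X\mapsto X''$, not $\widetilde{X}\simeq X$.

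A smaller point in Step~3: $\eta$ and $\eta^{-1}$ are not $A$-conjugate, since $\Ad(a)$ rescales root spaces by positive numbers. For $\WhL$ this is harmless, because the multiplicities are Bernstein degrees and do not depend on $\eta$. For $\WhLmg$, the actual role of $G=G_{\max}$ is that there is a single principal nilpotent orbit. Hence $\eta\in\WF(X)$ for every large $X$, and $\WhLmg\simeq\bigoplus_{X\ \mathrm{large}}E(X)$.
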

For the definition of $G=G_{\max}$, see the proof of 
Theorem~\ref{theorem:character of WhLmg}. 
Note that, as is shown in \cite{Taniguchi 2013} for $G=U(n,1)$ 
and in \cite{Taniguchi 2015} for $G=Spin(r,1)$, 
if $G$ is not a split group, 
such self-adjoint properties do not hold. 

One of the important facts used in this research is 
that 
the modules $\WhL$ and $\WhLmg$ are injective objects in the category
$\mathcal{H}_{G}[\Lambda]^{(1)}$ 
of Harish-Chandra $\brgK$-modules which admit the infinitesimal
character $\Lambda$ 
(Corollary~\ref{corollary:injectivity of Wh modules}). 
This is a restatement of the well known fact that the functor of 
``taking Whittaker vectors'' is exact 
(Theorem~\ref{theorem:exactness of Wh vector}). 
From this injectivity and well known results on the Whittaker models, 
we obtain the direct sum decomposition of 
$\WhL$ and $\WhLmg$ into indecomposable injective modules. 
The results are stated as 
Corollary~\ref{corollary:WhL as injective module}. 
Also, this injectivity captures the behavior of the modules $\WhL$
and $\WhLmg$ under the translation functor 
$\psi_{\Lambda}^{\Lambda'}$. 
The results are stated as 
Theorem~\ref{theorem:behavior under the translation functors}. 

Initially the main purpose of this research was to prove the
abovementioned self-adjoint property. 
However, once such injective property is known, 
this research also has the significance of investigating the injective
objects in the category $\mathcal{H}_{G}[\Lambda]^{(1)}$. 
For a module $X \in \mathcal{H}_{G}[\Lambda]^{(1)}$, 
let $E(X)$ be its injective envelope 
(for the definition, 
see Definition~\ref{definition:injective envelope}) 
in $\mathcal{H}_{G}[\Lambda]^{(1)}$ 
and let $P(X)$ be the projective cover of $X$ in it. 
The main results on the injective envelope of an irreducible large
module are as follows: 
\begin{theorem}{\rm (Theorem~\ref{theorem:1st duality theorem} 
and Corollary~\ref{corollary:proj hull is inj env})}
Suppose $G$ is a split group. 
\begin{enumerate}
\item
If $X \in \mathcal{H}_{G}[\Lambda]^{(1)}$ is an irreducible large module, 
then its injective envelope $E(X)$ in $\mathcal{H}_{G}[\Lambda]^{(1)}$ 
has a unique irreducible quotient module $\widetilde{X}$, which is large. 
It satisfies 
\[
E(X)^{c} \simeq E(\widetilde{X}^{c}). 
\]
In other words, 
\[
E(X) = P(\widetilde{X}) 
\]
and 
$\WhL$, $\WhLmg$ are also projective modules in the category 
$\mathcal{H}_{G}[\Lambda]^{(1)}$. 
\item
The correspondence $X \mapsto \widetilde{X}$ is a permutation on
the set of irreducible large modules in
$\mathcal{H}_{G}[\Lambda]^{(1)}$. 
\item
Moreover, if $G = G_{\max}$, then 
$\widetilde{X} \simeq X$. 
\end{enumerate}
\end{theorem}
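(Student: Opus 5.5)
We plan to derive all three parts from the injectivity of $\WhL$ and $\WhLmg$ in $\mathcal{H}_{G}[\Lambda]^{(1)}$ (Corollary~\ref{corollary:injectivity of Wh modules}), their decomposition into indecomposable injectives $E(X)$ with $X$ irreducible large (Corollary~\ref{corollary:WhL as injective module}), and the self-adjointness $(\WhL)^{c}\simeq\stWh{\eta}{-\Lambda}{\circ}$ (Theorem~\ref{theorem:2nd duality theorem}). Taking contragredients is an exact contravariant equivalence $\mathcal{H}_{G}[\Lambda]^{(1)}\to\mathcal{H}_{G}[-\Lambda]^{(1)}$, so it exchanges injectives with projectives, socles with tops, and injective envelopes with projective covers.

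\textbf{Step 1: $\WhL$ is projective.} The module $(\WhL)^{c}$ is projective in $\mathcal{H}_{G}[-\Lambda]^{(1)}$ because $\WhL$ is injective. By Theorem~\ref{theorem:2nd duality theorem} it is isomorphic to $\stWh{\eta}{-\Lambda}{\circ}$, so $\stWh{\eta}{-\Lambda}{\circ}$ is projective. Replacing $\Lambda$ by $-\Lambda$ shows that $\WhL$ is both injective and projective. Each direct summand $E(X)$ of $\WhL$ is therefore also projective. It is indecomposable, so it is the projective cover $P(Y)$ of its unique irreducible quotient $Y=:\widetilde{X}$.

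Two points need care here.
\begin{itemize}
\item Every irreducible large $X$ must occur as a summand of $\WhL$. We take this from Theorem~\ref{theorem:irred sub of WhL} together with the decomposition in Corollary~\ref{corollary:WhL as injective module}.
\item Indecomposable projectives in this finite-length category with enough projectives have simple tops. This holds because $\mathcal{H}_{G}[\Lambda]^{(1)}$ is equivalent to modules over a finite-dimensional algebra: it has finitely many irreducibles and admissible objects of finite length.
\end{itemize}

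\textbf{Step 2: $\widetilde{X}$ is large, and the duality formula.} Dualizing $E(X)$ gives an injective indecomposable module with simple socle $\widetilde{X}^{c}$, so $E(X)^{c}\simeq E(\widetilde{X}^{c})$. It is a direct summand of $(\WhL)^{c}\simeq\stWh{\eta}{-\Lambda}{\circ}$. All summands of the latter are injective envelopes of large irreducibles, by Corollary~\ref{corollary:WhL as injective module} applied at $-\Lambda$. By Krull--Schmidt, $\widetilde{X}^{c}$ is large. Gelfand--Kirillov dimension is preserved under contragredient, so $\widetilde{X}$ is large. The statement $E(X)=P(\widetilde{X})$ is exactly Step 1. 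The same argument, using the second half of Theorem~\ref{theorem:2nd duality theorem}, shows that $\WhLmg$ is projective when $G=G_{\max}$. For general $G$, we would try to realize $\WhLmg$ as a direct summand of $\WhL$ via Corollary~\ref{corollary:WhL as injective module}; this is possible since it is injective and embeds in $\WhL$.

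\textbf{Step 3: permutation, and part (3).} The map $X\mapsto\widetilde{X}$ is injective, since $P(\widetilde{X})\simeq E(X)$ determines $X$ as its socle. It sends the finite set of large irreducibles into itself, so it is a permutation. For (3), compare multiplicities. When $G=G_{\max}$, Corollary~\ref{corollary:WhL as injective module} should give $\WhLmg\simeq\bigoplus_{X}E(X)$, each large $X$ appearing once. Its dual is then $\bigoplus_X E(\widetilde{X}^{c})$, and by Theorem~\ref{theorem:2nd duality theorem} this equals $\bigoplus_{X'}E(X')$ at $-\Lambda$. Knowing $\widetilde{X}\simeq X$ requires more than this matching of sets, and we expect this to be the main obstacle. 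Our first attempt is to use that the isomorphism in Theorem~\ref{theorem:2nd duality theorem} comes from an explicit invariant pairing between $\WhLmg$ and $\stWh{\eta}{-\Lambda}{}$. This pairing would be compatible with the decomposition by Whittaker vectors, and we would use the canonical identification of summands via the one-dimensionality of the Whittaker vectors of each large $X$. If that fails, we would compute the top of $E(X)$ directly, via $\Hom(E(X),Y)$ and the exactness of the Whittaker functor (Theorem~\ref{theorem:exactness of Wh vector}).
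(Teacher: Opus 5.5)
\textbf{Circularity in parts (1) and (2).} Granting Theorem~\ref{theorem:2nd duality theorem}, your deduction of (1) and (2) is correct. Your argument that an indecomposable projective of finite length has a simple top is also a fine substitute for the paper's dualization. But that input is not available. In the paper, $(\WhL)^{c}\simeq\stWh{\eta}{-\Lambda}{\circ}$ is proved \emph{from} the isomorphism $E(X)^{c}\simeq E(X'')$, which is established in the discussion just before Theorem~\ref{theorem:1st duality theorem}. The $\WhLmg$ half of Theorem~\ref{theorem:2nd duality theorem} even cites Theorem~\ref{theorem:1st duality theorem} outright. So your argument is circular. Knowing that $(\WhL)^{c}$ is injective is the same as knowing that $\WhL$ is projective, and that is the real content of (1).

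The missing idea is an independent determination of $\soc (\WhL)^{c}$. The paper proceeds as follows:
\begin{enumerate}
\item It realizes $(\WhL)^{c}\simeq\duWh{-\Lambda}{-\eta}$ by cohomological induction.
\item Using translation functors, it shows that every irreducible submodule of $\duWhL$ is large, and that for $G=G_{\max}$ each large one occurs in the socle with multiplicity exactly $|W|$. The key step is Lemma~\ref{lemma:translation of duWhL from wall}, which rests on the Hom bounds of Lemmas~\ref{lemma:genPS to Pi0-2} and~\ref{lemma:sphePS to Pi0}.
\item Counting socle constituents then shows that each $E(X)^{c}$ has a simple large socle $X''$.
\item Injectivity of $E(X'')$ gives an embedding $E(X)^{c}\hookrightarrow E(X'')$. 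A $K$-type comparison of the direct sums, via Corollary~\ref{corollary:character of WhL}, forces this to be an isomorphism.
\item For $G\neq G_{\max}$ one restricts from $G^{+}$ and uses indecomposability.
\end{enumerate}
The self-duality of $\WhL$ is a consequence of this argument, not an input to it.

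\textbf{No argument for part (3).} You correctly identify the obstacle but give nothing that works. The isomorphism in Theorem~\ref{theorem:2nd duality theorem} is not constructed from any invariant pairing. Exactness of the Whittaker functor controls homomorphisms \emph{into} $\WhL$ and $\WhLmg$, not the top of $E(X)$.

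Note also that (3) genuinely fails without $G=G_{\max}$. For $Sp(2,\R)$ the top of $E(\overline{X}(\gamma_{0}))$ is $\overline{X}(\gamma_{1})$. So any proof must use $G_{\max}$ in an essential way, and ``each large $X$ occurs once in $\WhLmg$'' is not enough.

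The paper's input is block theory:
\begin{enumerate}
\item For $G=G_{\max}$ the number of blocks equals the number of big cones \cite[Proposition~4.4.5]{M2}.
\item Each big cone contains exactly one large irreducible \cite[Corollary~7.3.19]{Vogan Green}, because for split groups ``large'' means empty $\tau$-invariant.
\item Hence each block contains exactly one large irreducible.
\end{enumerate}
Since $E(X)$ is indecomposable, all its composition factors lie in the block of $X$. Its large irreducible quotient $\widetilde{X}$ must therefore be $X$.
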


In Section~\ref{section:examples}, we present explicit examples of the
socle filtration of $E(X)$ when $G$ is a real rank two connected split
group and $X$ is an irreducible large module.

Before going ahead, we introduce notation used in this paper. 
For a real Lie group $L$, the Lie algebra of it is denoted by
$\lier{l}$ and its complexification by 
$\lie{l} = \lier{l} \otimes_{\R} \C$. 
Its universal enveloping algebra is denoted by $U(\lie{l})$. 
The center of $U(\lie{l})$ is denoted by $Z(\lie{l})$. 
This notation will be applied to groups denoted by other Roman letters
in the same way without comment. 
For a closed subgroup $L'$ of $L$ and a representation $\delta$ of $L'$, 
denote by $\Ind_{L'}^{L}(\delta)$ the representation of $L$ 
induced from $\delta$. 
For a compact Lie group $L$, the set of equivalence classes of
irreducible representations of $L$ is denoted by $\widehat{L}$.

Throughout this paper, 
let $G$ be a real reductive linear Lie group in the sense 
of \cite{Vogan Green}, 
and $G=KAN$ be its Iwasawa decomposition. 
We choose the set $\Sigma^{+}$ of positive roots in the root system
$\Sigma := \Sigma(\lier{g}, \lier{a})$ so that it corresponds to the
Lie algebra $\lier{n}$ of $N$. 
Half the sum of elements in $\Sigma^{+}$ with multiplicity is
denoted by $\rho$, as usual. 
The little Weyl group $W(\lier{g}, \lier{a})$ is denoted by $W$ and
its order by $|W|$. 
The longest element of $W$ with respect to the positive system
$\Sigma^{+}$ is denoted by $w_{0}$. 
The centralizer $Z_{G}(A)$ of $A$ in $G$ is denoted by $M$. 
Therefore, $MAN$ is a minimal parabolic subgroup of $G$. 

If $X$ is a representation of $G$, the subspace of $K$-finite vectors
in $X$ is denoted by $X_{K}$. 
For example, 
$\Ind_{MAN}^{G}(\sigma \otimes e^{\nu+\rho} \otimes 1_{N})_{K}$ is 
the Harish-Chandra $\brgK$-module of the principal series
representation induced from the tensor product representation of 
$\sigma \in \widehat{M}$, representation $e^{\nu + \rho}$ of $A$ 
($\nu \in \lie{a}^{\ast}$) and the trivial representation $1_{N}$ of
$N$. (We do not abbreviate the twist by $\rho$.)  
As in Theorem~\ref{theorem=7.4}, for a $\brgK$-module $V$, 
the $K$-finite contragredient $\brgK$-module of it is denoted by $V^{c}$. 

\noindent
\textbf{Acknowledgements.} 
The author would like to thank Professor Hiroshi Oda and 
Professor Noriyuki Abe who taught him the abovementioned 
interpretation of the exactness of the 
``taking the Whittaker vectors'' functor.


\section{Injectivity of the standard Whittaker $\brgK$-modules}
\label{section:injectivity}

In this section, we first recall some theorems of Whittaker models and
translate them into properties of $\WhL$ and $\WhLmg$. 

Let $\eta : N \rightarrow \C^{\times}$ be a unitary character of $N$. 
We denote the differential character of $\lier{n}$ to $\C$ 
and its complexification on $\lie{n}$ by the same letter $\eta$. 
We call $\eta$ {\it non-degenerate} if the differential of $\eta$ is 
non-trivial on the root space $(\lier{g})_{\alpha}$ for each simple
root $\alpha$ of $\Sigma(\lier{n}, \lier{a})$. 
Throughout this paper, we assume a unitary character of $N$ to be
non-degenerate. 
For a $\Ug$-module $V$, 
define {\it the space of algebraic Whittaker vectors 
$\Wh_{\eta}(V)$} by 
\[
\Wh_{\eta}(V) 
:=\{v \in V \mid X v = \eta(X) v, \ (X \in \lie{n})\}.
\]

We denote by $\mathcal{H}_{G}$ the category of Harish-Chandra $\brgK$-modules. 
The subcategory of $\mathcal{H}_{G}$ consisting of those Harish-Chandra 
$\brgK$-modules which admit generalized infinitesimal character $\Lambda$ 
(resp. infinitesimal character $\Lambda$) is denoted by 
$\mathcal{H}_{G}[\Lambda]$ (resp. $\mathcal{H}_{G}[\Lambda]^{(1)}$). 

For $V \in \mathcal{H}_{G}$, let $V_{\infty}$ be the
$C^{\infty}$-globalization of $V$. 
It is knows that this is a nuclear Fr\'{e}chet $G$-representation 
and $V \mapsto V_{\infty}$ is exact 
(\cite{Casselman}). 
We denote the algebraic dual $\Ug$-module of $V$ by $V^{\ast}$, 
and the topological dual $G$-representation 
of $V_{\infty}$ by $V_{\infty}'$. 
We define {\it the space of continuous Whittaker vectors 
$\Wh_{\eta}^{-\infty}(V)$} by 
\[
\Wh_{\eta}^{-\infty}(V) := 
\Wh_{\eta}(V_{\infty}') 
= 
\{w \in V_{\infty}' \mid X w = \eta(X) w, \ (X \in \lie{n})\}. 
\]
The correspondences $V \mapsto \Wh_{\eta}(V^{\ast})$ and 
$V \mapsto \Wh_{\eta}^{-\infty}(V)$ define functors from 
$\mathcal{H}_{G}$ to the category $\mathrm{Vect}_{f}$ of finite
dimensional vector spaces. 
\begin{theorem}
\label{theorem:exactness of Wh vector}
Suppose $\eta$ is a non-degenerate unitary character of $N$. 
\begin{enumerate}
\item {\rm (Kostant \cite{Kos}, Lynch \cite{Lynch})} 
The functor 
$\mathcal{H}_{G} \ni V 
\mapsto 
\Wh_{\eta}(V^{\ast}) \in \mathrm{Vect}_{f}$ 
is exact. 
\item {\rm (Casselman, cf. \cite{CHM})}
The functor 
$\mathcal{H}_{G} \ni V 
\mapsto \Wh_{\eta}^{-\infty}(V) \in \mathrm{Vect}_{f}$ 
is exact. 
\end{enumerate}
\end{theorem}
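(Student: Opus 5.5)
Both assertions are classical; the plan is to reduce each to a vanishing statement for Lie algebra homology of $\lie{n}$ twisted by $\eta$, after reducing to the case of modules with generalized infinitesimal character.

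\emph{Reductions.} Every $V\in\mathcal{H}_{G}$ is a finitely generated $\Ug$-module which is $\Zg$-finite, hence a finite direct sum of modules with generalized infinitesimal character. Both functors commute with finite direct sums, so it suffices to prove exactness on each $\mathcal{H}_{G}[\Lambda]$. Finite dimensionality of $\Wh_{\eta}(V^{\ast})$ is Kostant's theorem: a Whittaker vector annihilated by a power of a maximal ideal of $\Zg$ lives in a finite dimensional space, bounded by $|W|$ times the number of generators. Finite dimensionality of $\Wh^{-\infty}_{\eta}(V)$ is proved in \cite{CHM}.

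\emph{Part (1).} Since $\Wh_{\eta}(V^{\ast})=\Hom_{\lie{n}}(\C_{\eta},V^{\ast})=(V/(\lie{n}-\eta)V)^{\ast}$, exactness amounts to exactness of the twisted coinvariant functor $V\mapsto H_{0}(\lie{n},V\otimes\C_{-\eta})$. By the long exact sequence in $\lie{n}$-homology, it is enough to show that $H_{1}(\lie{n},V\otimes\C_{-\eta})=0$ for every $V\in\mathcal{H}_{G}[\Lambda]$.

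This is where I follow Kostant and Lynch. First, $V$ is finitely generated over $\Un$; this follows from Casselman–Osborne and the finite generation of $V$ over $U(\bar{\lie{n}})$ modulo $K$-types. Second, using the Kostant filtration on $U(\lie{n})\otimes\C_{-\eta}$, one shows that $V$ restricted to $\lie{n}$ behaves like a module finite over a polynomial-type algebra on which $\eta$ lies in the nondegenerate (regular) locus. Lynch's theorem then gives the vanishing of higher $\eta$-twisted $\lie{n}$-homology for such modules. Equivalently, $V\mapsto V/(\lie{n}-\eta)V$ is exact on finitely generated $\Zg$-finite modules that are locally $\lie{n}$-finite up to the twist, or, dually, on category-$\mathcal{O}$-type objects.

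The main obstacle is this homological vanishing. Its key input is nondegeneracy of $\eta$, which forces $\eta$ into the open orbit so that Kostant's regular-nilpotent slice argument applies.

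\emph{Part (2).} For the continuous version I use that $V\mapsto V_{\infty}$ is exact (Casselman–Wallach), so a short exact sequence $0\to V_{1}\to V_{2}\to V_{3}\to0$ yields a topologically exact sequence of Fréchet spaces, and dualizing preserves exactness. It then remains to show that taking $\eta$-eigenvectors of $\lie{n}$ on the dual sequence is exact, i.e. that $H_{1}(\lie{n},V_{\infty}\otimes\C_{-\eta})$ vanishes and that $(\lie{n}-\eta)V_{\infty}$ is closed. Both statements are the content of Casselman's theorem in \cite{CHM}. That proof embeds $V$ into a principal series via Casselman's subrepresentation theorem and uses the Bruhat filtration. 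On the open cell the Jacquet integral computes the Whittaker functionals; on the smaller cells nondegeneracy of $\eta$ kills the contributions.
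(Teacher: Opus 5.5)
Your proposal matches the paper's treatment: the paper gives no argument of its own here and simply attributes (1) to Kostant and Lynch and (2) to Casselman via \cite{CHM}, and your reductions are correct wrappers around exactly those cited results. In (1) you identify $\Wh_{\eta}(V^{\ast})$ with the dual of the twisted coinvariants and reduce to vanishing of the first twisted $\lie{n}$-homology; in (2) you use exactness of $V\mapsto V_{\infty}$, dualize, and need closed range of $\lie{n}-\eta$ together with vanishing of $H_{1}$.

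Your explanatory glosses are not accurate, however, and should not be relied on. \emph{The mechanism for the key vanishing.} It genuinely needs the $\lie{g}$-structure. For $\lie{g}=\lie{sl}_3$, the $U(\lie{a}+\lie{n})$-module $\Un/\Un[\lie{n},\lie{n}]$ is finitely generated over $\Un$, yet Shapiro's lemma gives $H_1(\lie{n},\,\cdot\,\otimes\C_{-\eta})\simeq\C$ for it. \emph{The ``equivalent'' reformulation.} It is not an equivalence, because Harish-Chandra modules contain no nonzero vectors on which $\lie{n}-\eta$ acts locally nilpotently. \emph{Finite dimensionality.} For $\Wh_{\eta}(V^{\ast})$ this follows directly from $V$ being finitely generated over $\Un$, since the twisted coinvariants are spanned by the images of the generators. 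It does not follow from a bound in terms of $\Ug$-generators; such a bound already fails for $\Ug/\Ug\ker\chi_{\Lambda}$.
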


A vector $w \in \Wh_{-\eta}^{-\infty}(V)$ 
defines a continuous 
$G$-intertwining 
operator $\Phi$ from $V_{\infty}$ to $C^{\infty}(G/N; \eta)$ 
by the matrix coefficient map 
\[
V_{\infty} \ni v \mapsto \langle g \cdot w, v \rangle \in
C^{\infty}(G). 
\]
This map gives a linear isomorphism 
$\Wh_{-\eta}^{-\infty}(V) 
\simeq \Hom_{G}(V_{\infty}, C^{\infty}(G/N; \eta))$, 
where the right hand side is the space of continuous $G$-intertwining
operators. 
By a result of Wallach \cite{W}, 
for 
$V \in \mathcal{H}_{G}$, 
$\Phi \in \Hom_{\gK}(V, C^{\infty}(G/K; \eta)_{K})$ can be 
extended to an element of 
$\Hom_{G}(V_{\infty}, C^{\infty}(G/N; \eta))$ 
if and only if $\Phi(v)$ is a moderate growth function on
$G$ for every $v \in V$. 
Especially, if $V \in \mathcal{H}_{G}[\Lambda]^{(1)}$, 
then this is equivalent to $\Phi(V) \subset \WhLmg$. 
Therefore, Theorem~\ref{theorem:exactness of Wh vector}(2) means that 
$\mathcal{H}_{G}[\Lambda]^{(1)} \ni V \mapsto 
\Hom_{\gK}(V, \WhLmg) \in \mathrm{Vect}_{f}$ is exact. 

As for the space $\WhL$, every vector 
$v^{\ast} \in \Wh_{-\eta}(V^{\ast})$ 
defines a $\brgK$-intertwining operator 
$\Psi_{v^{\ast}} \in \Hom_{\gK}(V, C^{\infty}(G/N; \eta)_{K})$.
This is due to Goodman-Wallach (\cite{GW}) for quasi-split group $G$, 
and the proof for general $G$ is due to Matumoto (\cite{M1}).  
Therefore, Theorem~\ref{theorem:exactness of Wh vector}(1) implies
that 
$\mathcal{H}_{G}[\Lambda]^{(1)} \ni V \mapsto 
\Hom_{\gK}(V, \WhL) \in \mathrm{Vect}_{f}$ is exact, 
and then we can restate Theorem~\ref{theorem:exactness of Wh vector}
as follows: 
\begin{corollary}
\label{corollary:injectivity of Wh modules}
The modules $\WhL$ and $\WhLmg$ are injective modules 
in the category $\mathcal{H}_{G}[\Lambda]^{(1)}$. 
\end{corollary}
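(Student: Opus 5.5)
The plan is to deduce injectivity directly from the exactness statements already recorded, namely the exactness of the functors $V \mapsto \Hom_{\gK}(V,\WhL)$ and $V \mapsto \Hom_{\gK}(V,\WhLmg)$ on $\mathcal{H}_{G}[\Lambda]^{(1)}$. First I will check that $\WhL$ and $\WhLmg$ are objects of $\mathcal{H}_{G}[\Lambda]^{(1)}$. They are $\brgK$-submodules of $C^{\infty}(G/N;\eta)_{K}$ on which $\Zg$ acts by $\chi_{\Lambda}$ by definition; moderate growth is preserved by left translation and by left differentiation. Finite generation and $K$-admissibility come from the result of \cite{Taniguchi 2013} quoted in the introduction: these modules are $K$-admissible and of finite length. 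Hence they are Harish-Chandra modules with infinitesimal character $\Lambda$.

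Next, identify $\Hom_{\gK}(V,\WhL)$ with $\Hom_{\gK}(V, C^{\infty}(G/N;\eta)_{K})$ for $V\in\mathcal{H}_{G}[\Lambda]^{(1)}$. This holds because the image of $V$ automatically consists of $\Zg$-eigenfunctions with character $\chi_{\Lambda}$. Likewise, $\Hom_{\gK}(V,\WhLmg)$ is the subspace of moderate-growth maps.
\begin{itemize}
\item By the Goodman--Wallach/Matumoto result, $v^{\ast}\mapsto\Psi_{v^{\ast}}$ gives a natural isomorphism $\Wh_{-\eta}(V^{\ast})\simeq\Hom_{\gK}(V,\WhL)$. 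Surjectivity is by evaluation at the identity: $\Phi\mapsto(v\mapsto\Phi(v)(e))$ lands in $\Wh_{-\eta}(V^{\ast})$.
\item By Wallach's theorem together with the matrix-coefficient map, $\Wh^{-\infty}_{-\eta}(V)\simeq\Hom_{\gK}(V,\WhLmg)$.
\end{itemize}
Both isomorphisms are natural in $V$. Since $-\eta$ is again a non-degenerate unitary character, Theorem~\ref{theorem:exactness of Wh vector} shows that the contravariant functors $\Hom_{\gK}(\,\cdot\,,\WhL)$ and $\Hom_{\gK}(\,\cdot\,,\WhLmg)$ are exact on $\mathcal{H}_{G}[\Lambda]^{(1)}$.

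Finally, apply the definition of injectivity. Take a monomorphism $0\to V_{1}\to V_{2}$ in $\mathcal{H}_{G}[\Lambda]^{(1)}$ and complete it to a short exact sequence $0\to V_{1}\to V_{2}\to V_{2}/V_{1}\to 0$; the quotient again lies in the category. Exactness of $\Hom(\,\cdot\,,I)$ then gives surjectivity of $\Hom(V_{2},I)\to\Hom(V_{1},I)$ for $I=\WhL$ or $I=\WhLmg$, so every morphism $V_{1}\to I$ extends to $V_{2}$.

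The main obstacle is the naturality and bijectivity of these identifications. In the first case, we must check that every $\brgK$-map to $C^{\infty}(G/N;\eta)_{K}$ comes from some $v^{\ast}$. This reduces to a function in $C^{\infty}(G/N;\eta)_{K}$ being determined by its $\Ug$-derivatives at $e$ (analyticity). This holds because the image vectors are $\Zg$-finite and $K$-finite, hence real analytic.
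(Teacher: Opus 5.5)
Your proposal is correct and follows essentially the paper's route: identify $\Hom_{\gK}(V,\WhL)$ and $\Hom_{\gK}(V,\WhLmg)$ naturally with $\Wh_{-\eta}(V^{\ast})$ (Goodman--Wallach/Matumoto) and with $\Wh_{-\eta}^{-\infty}(V)$ (matrix coefficients plus Wallach's extension theorem), so that Theorem~\ref{theorem:exactness of Wh vector} becomes exactness of $\Hom_{\gK}(\,\cdot\,, I)$ on $\mathcal{H}_{G}[\Lambda]^{(1)}$, i.e.\ injectivity. You also check that the first identification is bijective, using evaluation at $e$ together with real analyticity (the same device the paper later uses in proving Proposition~\ref{proposition:realization of WhL by coh-ind}); this fills in a step the paper leaves implicit.
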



Next, we recall general theory of injective modules.
Let $R$ be an algebra and let $\mathcal{C}$ be a category of $R$-modules. 
We denote by $\Obj(\mathcal{C})$ and $\Mor(\mathcal{C})$ the objects and 
morphisms of $\mathcal{C}$, respectively. 
\begin{definition}
\label{definition:injective envelope}
Let $V \in \Obj(\mathcal{C})$. 
A pair $(E, i)$ ($E \in \Obj(\mathcal{C})$, $i \in \Mor(\mathcal{C})$) is 
an {\it injective envelope of $V$} in case $E$ is an injective object 
in $\mathcal{C}$ and 
\[
0 \rightarrow V \overset{i}{\rightarrow} E
\]
is an essential monomorphism. 
This is unique for $V$ up to isomorphism, and we denote it by 
$E(V)$. 
\end{definition}
%

\begin{definition}\label{definition:socle}
For $V \in \Obj(\mathcal{C})$, the {\it socle} of $V$ is 
the maximal semisimple submodule of $V$. 
We denote it by $\soc V$. 
\end{definition}
The following proposition is a consequence of the idempotent lifting
process (cf. {\cite[Part I, Proposition~3.16]{Jan}}). 
\begin{proposition}
\label{proposition:decomp of injective}
\begin{enumerate}
\item
An injective object $E$ of $\Obj(\mathcal{C})$ is indecomposable 
if and only if there exists a simple object $V$ of $\Obj(\mathcal{C})$ 
such that $E \simeq E(V)$. 
\item
An injective object $E \in \mathcal{C}$ is a direct sum of
indecomposable submodules. 
For each simple module $V \in \mathcal{C}$, 
the number of summands isomorphic to $E(V)$ is equal to 
the multiplicity of $V$ in $\soc E$. 
\end{enumerate}
\end{proposition}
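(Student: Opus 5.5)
We have to prove Proposition~\ref{proposition:decomp of injective} in the setting where it will be used: $\mathcal{C}$ is a category like $\mathcal{H}_{G}[\Lambda]^{(1)}$, in which every object has finite length. We also assume $\mathcal{C}$ is closed under submodules, quotients and finite direct sums. The plan is to argue with essential extensions and socles only. The key tool is that every nonzero object has nonzero socle, because it has finite length.

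\emph{Step 1 (uniqueness and a criterion).} Let $(E,i)$ and $(E',i')$ be two injective envelopes of $V$. By injectivity of $E'$, $i'$ extends along $i$ to a map $f:E\to E'$. Then $\ker f\cap i(V)=0$, and since $i$ is essential this forces $\ker f=0$. Hence $f(E)$ is an injective subobject of $E'$, so it is a direct summand: $E'=f(E)\oplus C$. But $C\cap i'(V)=0$, so essentiality of $i'$ gives $C=0$.

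A related fact: for any injective $E$, the inclusion $\soc E\subset E$ is essential. Indeed, any nonzero subobject $U$ of $E$ has $\soc U\neq 0$, and $\soc U\subset \soc E$. This also shows that an injective $E$ is an injective envelope of $\soc E$.

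\emph{Step 2 (part (1)).} First suppose $E=E(V)$ with $V$ simple, and $E=E_1\oplus E_2$ with both summands nonzero. Then $V\subset E$ meets each $E_j$ nontrivially, so $V\subset E_1$ and $V\subset E_2$ because $V$ is simple. This contradicts $E_1\cap E_2=0$, so $E(V)$ is indecomposable.

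Conversely, let $E$ be indecomposable injective. Write $\soc E=\bigoplus_k V_k$ as a finite sum of simples. Each $V_k$ embeds in $E$, so by injectivity we can construct envelopes $E(V_k)\subset E$. The sum $\sum_k E(V_k)$ is direct: any nonzero intersection would contain a simple piece of a socle, contradicting the directness of $\bigoplus_k V_k$. The submodule $\bigoplus_k E(V_k)$ is injective, being a finite direct sum of injectives, so it is a summand of $E$. Its complement meets $\soc E$ trivially, hence is zero. So $E=\bigoplus_k E(V_k)$. Indecomposability then forces exactly one summand, and $E\simeq E(V)$ with $V=V_1$ simple.

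\emph{Step 3 (part (2)).} The construction in Step 2 already gives $E\simeq\bigoplus_k E(V_k)$ for any injective $E$, where $\soc E=\bigoplus_k V_k$. Since $\soc$ commutes with finite direct sums and $\soc E(V)=V$ by essentiality, the number of summands isomorphic to $E(V)$ equals the multiplicity of $V$ in $\soc E$. This count does not depend on the chosen decomposition, because it is read off intrinsically from $\soc E$.

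The main obstacle is the existence of injective envelopes $E(V_k)$ inside $E$. Here I would take a maximal essential extension of $V_k$ inside $E$. Finite length makes maximal elements exist. Injectivity then follows from the standard argument that a maximal essential extension inside an injective object is a direct summand: choose a complement $C$ of the extension $M$ maximal with $C\cap M=0$, and show $E=M\oplus C$ via the essentiality of $M\oplus C$. This is where the idempotent lifting of \cite[Part I, Proposition~3.16]{Jan} can alternatively be used.
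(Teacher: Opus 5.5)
Your proof is correct under the hypotheses you impose: every object has finite length, and $\mathcal{C}$ is closed under subobjects, quotients and finite direct sums. These hold for $\mathcal{H}_{G}[\Lambda]^{(1)}$. You never need closure under extensions, which that category lacks.

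It is not the paper's route. The paper gives no argument of its own, only a pointer to the idempotent lifting process in Jantzen. That route works inside $\mathrm{End}(E)$:
\begin{itemize}
\item by injectivity, restriction to $\soc E$ is a surjective ring map $\mathrm{End}(E)\to\mathrm{End}(\soc E)$;
\item its kernel is nil, since an endomorphism killing the essential socle is nilpotent by Fitting's lemma;
\item so the idempotents splitting the semisimple module $\soc E$ into simples lift, and they split $E$ into summands with simple essential socle, i.e.\ copies of $E(V)$.
\end{itemize}

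Your route avoids endomorphism rings and works in the lattice of subobjects. You build envelopes as maximal essential extensions inside a given injective. The socle argument then gives $E\simeq\bigoplus_k E(V_k)$ directly, with (1) as a corollary and the count in (2) read off intrinsically from $\soc E$, with no Krull--Schmidt theorem.

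Both arguments need every nonzero object to have nonzero socle. You state this explicitly. The paper's formulation, for an arbitrary category of $R$-modules, leaves it implicit although it is necessary: $\mathbb{Q}$ is an indecomposable injective $\mathbb{Z}$-module with zero socle.

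One step should be repaired. In your last paragraph, essentiality of $M\oplus C$ in $E$ does not by itself give $E=M\oplus C$; for example take $E=M\oplus E(S)$ and $C=S\subsetneq E(S)$. The operative fact is the maximality of $C$, and the argument runs as follows.
\begin{itemize}
\item If $C\subsetneq U\subset E$, then $U\cap M\neq 0$. Hence the image of $M$ is essential in $E/C$.
\item Injectivity of $E$ extends $M\hookrightarrow E$ to a map $g\colon E/C\to E$, and $g$ is injective by this essentiality.
\item $g(E/C)$ is an essential extension of $V_k$ containing $M$, so it equals $M$ by maximality of $M$.
\item Since $g$ already maps $(M+C)/C$ onto $M$ and is injective, $E/C=(M+C)/C$, i.e.\ $E=M+C=M\oplus C$.
\end{itemize}
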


The socles of $\WhL$ and $\WhLmg$ are known. 
\begin{theorem}[Kostant \cite{Kos}, Matumoto \cite{M1}]
\label{theorem:irred sub of WhL}
\begin{enumerate}
\item 
An irreducible module $V \in \mathcal{H}_{G}[\Lambda]$ is 
a submodule of $\WhL$ if and only if the Gelfand-Kirillov dimension 
$\Dim(V)$ of $V$ is equal to $\dim \lie{n}$. 
\item 
In this case, the multiplicity of $V$ in the socle 
of $\WhL$ is equal to the Bernstein degree $c_{\dim \lie{n}}(V)$ 
of $V$. 
\end{enumerate}
\end{theorem}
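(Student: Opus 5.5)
The plan is to reduce both statements to a dimension count for algebraic Whittaker vectors, and then to identify that count with a leading coefficient of the Hilbert polynomial. The starting point is the map $v^{\ast}\mapsto\Psi_{v^{\ast}}$ recalled before Corollary~\ref{corollary:injectivity of Wh modules}, due to Goodman--Wallach and Matumoto. It gives a linear isomorphism
\[
\Hom_{\gK}(V,\WhL)\simeq \Wh_{-\eta}(V^{\ast}),
\]
and I would first check that this map is surjective as well as injective. Injectivity comes from evaluating at $e$. Surjectivity holds because for $\Phi\in\Hom_{\gK}(V,\WhL)$ the functional $v\mapsto \Phi(v)(e)$ lies in $\Wh_{-\eta}(V^{\ast})$, and $\Phi$ is recovered from it since $\Phi(v)$ is real analytic, being a $\Zg$-eigenfunction, and $K$-finite. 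For irreducible $V$ the multiplicity of $V$ in $\soc\WhL$ equals $\dim\Hom_{\gK}(V,\WhL)$. So both (1) and (2) follow from the single formula
\[
\dim \Wh_{-\eta}(V^{\ast}) =
\begin{cases} c_{\dim\lie{n}}(V), & \Dim(V)=\dim\lie{n},\\ 0, & \text{otherwise.}\end{cases}
\]
For (1) I would also use two facts: $\Dim(V)\le\dim\lie{n}$ for Harish-Chandra modules with $\Wh_{-\eta}(V^{\ast})\ne0$, and $c_{\dim\lie{n}}(V)>0$ exactly when equality holds.

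To prove the formula, note that $\Wh_{-\eta}(V^{\ast})$ is the dual of the coinvariant space $H_{0}(\lie{n},V\otimes\C_{\eta})=V/\langle Xv-\eta(X)v : X\in\lie{n}\rangle$. Both sides of the formula are additive on $\mathcal{H}_{G}$. The left side is additive by Theorem~\ref{theorem:exactness of Wh vector}(1). On the right, $V\mapsto c_{\dim\lie{n}}(V)$ is additive provided we set $c_{\dim\lie{n}}(V)=0$ when $\Dim V<\dim\lie{n}$, since leading coefficients of Hilbert polynomials add in short exact sequences. So it suffices to verify the formula on a family of modules spanning the Grothendieck group of $\mathcal{H}_{G}[\Lambda]$, namely the standard modules induced from all $\theta$-stable Cartan subgroups.

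For the minimal principal series $\Ind_{MAN}^{G}(\sigma\otimes e^{\nu+\rho}\otimes1_{N})_{K}$, the Whittaker vectors can be counted by Bruhat filtration. Only the open cell $w_{0}MAN$ contributes, because non-degeneracy of $\eta$ kills the coinvariants on the smaller cells; this is Kostant's argument. The count gives $\dim\sigma$, and one compares it with the Bernstein degree computed from the $K$-types, which is the restriction of $\sigma$ induced from $M$ to $K$. The remaining standard modules are induced from parabolics $P=M_{P}A_{P}N_{P}$ with $M_{P}$-discrete series. For these I would use induction in stages, $H_{0}(\lie{n},\Ind_{P}^{G}(\cdot))\simeq H_{0}(\lie{n}\cap\lie{m}_{P},\,\cdot\,)$ on the open cell. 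This reduces the claim to the group $M_{P}$, where it holds by induction on $\dim G$. The Bernstein degree of an induced module is multiplicative in the obvious way, so both sides match.

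I expect the main obstacle to be the base case of that induction: identifying $\dim H_{0}(\lie{n}\cap\lie{m},\,\cdot\,)$ for a discrete series of $M_{P}$ with its Bernstein degree, and showing that both vanish when $M_{P}$ has no large discrete series. If the inductive route gets stuck, I would switch to Matumoto's geometric route. Choose a good filtration of $V$ and deform $H_{0}(\lie{n},V\otimes\C_{\eta})$ to the fibre of the graded module $\mathrm{gr}\,V$ over the affine slice $\eta+\lie{n}^{\perp}$ in $(\lie{g}/\lie{k})^{\ast}$. By Kostant's theorem this slice meets the nilpotent cone transversally, and only at principal (regular) nilpotents. The dimension of the fibre is then the multiplicity of the principal orbits in the associated cycle of $V$, which is $c_{\dim\lie{n}}(V)$ up to the normalization of degrees, and it is zero if $\Dim V<\dim\lie{n}$. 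The delicate point in this route is proving that the deformation is flat.
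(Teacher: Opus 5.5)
The paper gives no proof of this statement; it is quoted from \cite{Kos} and \cite{M1}. So I judge your argument on its own. Your opening reduction is sound. The map $v^{\ast}\mapsto\Psi_{v^{\ast}}$ identifies $\Hom_{\gK}(V,\WhL)$ with $\Wh_{-\eta}(V^{\ast})$. The socle multiplicity of an irreducible $V$ is $\dim\Hom_{\gK}(V,\WhL)$. Both $V\mapsto\dim\Wh_{-\eta}(V^{\ast})$ (Theorem~\ref{theorem:exactness of Wh vector}(1)) and $V\mapsto c_{\dim\lie{n}}(V)$ are additive, so it would suffice to check the formula on standard modules.

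The gap is in how you evaluate the left side on those modules. The Bruhat-cell argument (lower cells killed by non-degeneracy, only the open cell survives) and the reduction $H_{0}(\lie{n},\Ind_{P}^{G}(\cdot))\simeq H_{0}(\lie{n}\cap\lie{m}_{P},\cdot)$ are statements about smooth or distribution globalizations. They compute the \emph{continuous} Whittaker vectors $\Wh_{-\eta}^{-\infty}(V)$ of Theorem~\ref{theorem:exactness of Wh vector}(2), which is Casselman's argument (cf.\ \cite{CHM}); Theorem~\ref{theorem:holomorphy of Jacquet} is exactly your induction-in-stages statement in that setting. That is, they compute $\dim\Hom_{\gK}(V,\WhLmg)$, not $\dim\Hom_{\gK}(V,\WhL)$. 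The $K$-finite module carries no Bruhat filtration, and algebraic $\eta$-functionals on it are not concentrated on the open cell.

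For a minimal principal series of a split group, the algebraic count is $|W|\dim\sigma$, not $\dim\sigma$. You can see this in three ways:
compare Corollary~\ref{corollary:character of WhL} with Theorem~\ref{theorem:character of WhLmg};
or use Kostant's Theorem~I as invoked before Theorem~\ref{theorem:1st duality theorem};
or look at $SL(2,\R)$, where an irreducible principal series embeds in $\WhL$ in a two-dimensional family (each $K$-type solves a second-order Whittaker equation) but in $\WhLmg$ only along the Jacquet-integral line.
On the other side, the Bernstein degree of that principal series really is $|W|\dim\sigma$. For the spherical one, $\mathrm{gr}V\simeq\C[\mathcal{N}_{\lie{p}}]$ by Kostant--Rallis, a complete intersection cut out by the basic invariants, of degree $d_{1}\cdots d_{r}=|W|$. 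So the two numbers you propose to compare differ by a factor $|W|$, and the verification breaks at its first step. Likewise, the algebraic count for $\Ind_{P}^{G}$ acquires an extra factor depending on $P$, which ``multiplicative in the obvious way'' does not capture.

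Even with correct counts for induced modules, your induction cannot bottom out. Discrete series of $G$ itself are standard modules, i.e.\ basis elements of the Grothendieck group not induced from anything smaller. For them, large or not (in the latter case both sides must vanish), the formula is the full content of the theorem. So the ``base case'' you flag is the theorem itself, not a technicality.

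Your fallback is the right kind of argument, but the step you call delicate is precisely the one that carries the proof. You need a filtration compatible with the inhomogeneous relations $X-\eta(X)$. You also need a vanishing or Cohen--Macaulay statement guaranteeing that $\dim H_{0}$ does not jump on passing to $\mathrm{gr}V$, so that it equals an intersection multiplicity. Only then does a B\'ezout count give $c_{\dim\lie{n}}(V)$: the slice is a translate of $\lie{a}$, and $\lie{a}\cap\mathcal{N}_{\lie{p}}=0$, so nothing escapes to infinity.

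Finally, the statement is for arbitrary $G$. When $G$ is not quasi-split, $\lie{p}$ contains no principal nilpotent of $\lie{g}$. So ``the slice meets the nilpotent cone only at principal nilpotents'' must be replaced by a statement about the open $K_{\C}$-orbits in $\mathcal{N}_{\lie{p}}$, which have dimension $\dim\lie{n}$.
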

\begin{theorem}[Matumoto\cite{M2}]\label{theorem:irred sub of WhLmg}
\begin{enumerate}
\item
An irreducible module $V \in \mathcal{H}_{G}[\Lambda]$ 
is a submodule of $\WhLmg$ if and only if $\eta$ is contained in 
the wave front set $\mathrm{WF}(V)$ of $V$. 
\item
If $G$ is quasi-split and $V$ satisfies the condition in (1), 
then the multiplicity of $V$ in $\soc \WhLmg$ is one. 
\end{enumerate}
\end{theorem}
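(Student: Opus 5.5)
The plan is to translate both statements into statements about continuous Whittaker vectors. As recalled just before Corollary~\ref{corollary:injectivity of Wh modules}, Wallach's moderate growth criterion together with the matrix coefficient map gives, for $V \in \mathcal{H}_{G}[\Lambda]^{(1)}$,
\[
\Hom_{\gK}(V, \WhLmg) \simeq \Wh_{-\eta}^{-\infty}(V).
\]
Hence, for irreducible $V$, (1) is equivalent to the claim that $\Wh_{-\eta}^{-\infty}(V) \neq 0$ if and only if $\eta \in \WF(V)$. Similarly, (2) is equivalent to the claim that $\dim \Wh_{-\eta}^{-\infty}(V) \le 1$ when $G$ is quasi-split; the sign of $\eta$ is harmless since $\WF(V)$ is stable under the relevant symmetry, and I would check this convention first.

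For (2) I would use the principal series. First, compute $\Wh_{-\eta}^{-\infty}$ of $I(\sigma,\nu) = \Ind_{MAN}^{G}(\sigma\otimes e^{\nu+\rho}\otimes 1_{N})$ by filtering the distribution vectors according to the Bruhat decomposition $G = \bigsqcup_{w} N w MAN$. On every cell other than the open one $N w_{0} MAN$, a Whittaker functional supported there would have to be invariant under some simple root subgroup of $N$ on which $\eta$ is nontrivial. By the non-degeneracy of $\eta$, such cells contribute nothing. The open cell contributes exactly $\dim\sigma$ functionals, namely the Jacquet integrals and their analytic continuation in $\nu$. Thus $\dim \Wh_{-\eta}^{-\infty}(I(\sigma,\nu)) = \dim \sigma$. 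For quasi-split $G$, $M$ is abelian, so this dimension is $1$. By the Casselman subquotient theorem, any irreducible $V$ is a quotient of some $I(\sigma,\nu)$. The exactness of Theorem~\ref{theorem:exactness of Wh vector}(2) then gives $\dim \Wh_{-\eta}^{-\infty}(V) \le 1$, which is (2).

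For (1), I would prove the two implications separately. The "only if" direction is microlocal. A nonzero $w \in \Wh_{-\eta}^{-\infty}(V)$ produces matrix coefficients $g \mapsto \langle g\cdot w, v\rangle$ that transform by $\eta$ under right translation by $N$. Restricting to a neighbourhood of $e$ and taking the Fourier transform along $\lier{n}$, these coefficients have singular spectrum in the direction of $\eta$ (viewed in $\lier{g}^{\ast}$ via the Killing form). Combined with the Howe/Barbasch--Vogan description of $\WF(V)$ as the asymptotic support of the character, this forces $\eta \in \WF(V)$.

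The converse is the main obstacle. My plan is again to pass through standard modules. Write $\ch V$ as an integral combination of characters of standard modules and compare with the leading terms of the Barbasch--Vogan asymptotic expansion at the identity. The coefficient of the nilpotent orbit through $\eta$ in this expansion should be computable as a suitable alternating sum of $\dim \Wh_{-\eta}^{-\infty}$ of the standard modules involved. By the exactness in Theorem~\ref{theorem:exactness of Wh vector}(2), that same alternating sum computes $\dim \Wh_{-\eta}^{-\infty}(V)$. Therefore the leading coefficient is nonzero exactly when continuous Whittaker vectors exist, which yields the equivalence.

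The delicate point is making this comparison between the asymptotic expansion and Whittaker dimensions rigorous for non-principal standard modules. If that route stalls, I would instead reduce, via the Barbasch--Vogan theorem $\WF(V) = $ the real form of the associated variety of $\mathrm{Ann}\, V$ intersected with $i\lier{g}^{\ast}$, to Kostant--Lynch-type algebraic Whittaker theory on a suitable Harish-Chandra bimodule.
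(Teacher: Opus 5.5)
The paper gives no proof of this statement; it is quoted from Matumoto~\cite{M2}, so your proposal has to stand on its own.

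\textbf{Part (2) and the sign convention.} Your argument for (2) is essentially sound. For $w\neq w_{0}$, the stabilizer $N\cap wNw^{-1}$ of the point $w\,MAN$ contains a simple root subgroup on which $\eta$ is nontrivial. This stabilizer acts unipotently on the transversal jets, so no nonzero $\eta$-equivariant distribution section is supported off the open cell. That gives $\dim\Wh_{-\eta}^{-\infty}(I(\sigma,\nu))\leq\dim\sigma$ for every $\nu$, and exactness passes the bound to subquotients. You should justify that the class of groups forces $M$ to be abelian, since that is exactly what (2) needs. It holds, e.g., for open subgroups of the real points of a connected quasi-split group.

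Your sign remark is wrong: $\WF(V)$ is not stable under $\eta\mapsto-\eta$. For $SL(2,\R)$, $\eta$ and $-\eta$ lie in the two different principal orbits, and a holomorphic discrete series has continuous Whittaker vectors for only one sign. The sign must therefore be fixed by Matumoto's normalization, not by a symmetry.

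\textbf{The genuine gap is in (1).} Your ``only if'' argument fails. For $w\in\Wh_{-\eta}^{-\infty}(V)$ and $v\in V_{\infty}$, the function $g\mapsto\langle g\cdot w,v\rangle$ is smooth (real analytic for $K$-finite $v$); it is just $\Phi(v)\in C^{\infty}(G/N;\eta)$. So it has empty singular spectrum. The $\eta$-equivariance along $N$ is a global Fourier-support property that disappears after cutting off near $e$, and the argument never touches the character of $V$.

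For the converse, everything rests on the identity $c_{\mathcal{O}_{\eta}}(X)=\dim\Wh_{-\eta}^{-\infty}(X)$ for standard modules $X$. Here $c_{\mathcal{O}_{\eta}}$ is the coefficient of the orbit of $\eta$ in the Barbasch--Vogan leading term. With this identity, additivity would give both directions of (1) at once, so it is the whole theorem, and you leave it unproved. Via Theorem~\ref{theorem:holomorphy of Jacquet} and the induced character formula, it reduces to discrete series of the Levi factors $M'$. There you would need both their Whittaker dimensions and the principal-orbit coefficients of their characters near the identity (Rossmann's limit formula). Alternatively you would need a coherent-continuation and big-cone argument, the machinery of \cite{M2} that the paper later quotes. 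None of this is supplied.

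Your fallback cannot work either. Barbasch--Vogan only identify the complexification of $\WF(V)$ with the associated variety of $\mathrm{Ann}\,V$, and the annihilator does not see the real form of the principal orbit. For example, holomorphic and antiholomorphic discrete series of $SL(2,\R)$ with the same infinitesimal character have the same annihilator but different wave front sets. This is precisely why algebraic Kostant--Lynch theory only detects $\Dim V=\dim\lie{n}$ (Theorem~\ref{theorem:irred sub of WhL}) and not $\eta\in\WF(V)$.
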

As corollaries of these theorems, we have the followings:
\begin{corollary}
\label{corollary:WhL as injective module}
Denote by $\mathcal{H}_{G}[\Lambda]_{\mathrm{irr}}$ the set of 
the equivalence classes of 
irreducible Harish-Chandra $\brgK$-modules with the infinitesimal 
character $\Lambda$. 
\begin{enumerate}
\item
$\displaystyle 
\WhL 
\simeq 
\bigoplus_{V \in \mathcal{H}_{G}[\Lambda]_{\mathrm{irr}}, 
\Dim(V) = \dim \lie{n}}
E(V)^{\oplus c_{\dim \lie{n}}(V)}$. 
\item
Let $m_{\eta}^{\infty}(V)$ be the multiplicity of $V$ 
in $\soc \WhLmg$. 
Then 
\[ 
\WhLmg 
\simeq 
\bigoplus_{V \in \mathcal{H}_{G}[\Lambda]_{\mathrm{irr}}, 
\eta \in \mathrm{WF}(V)}
E(V)^{\oplus m_{\eta}^{\infty}(V)}.
\]
\item
Especially, if $G$ is quasi-split, then 
\[ 
\WhLmg 
\simeq 
\bigoplus_{V \in \mathcal{H}_{G}[\Lambda]_{\mathrm{irr}}, 
\eta \in \mathrm{WF}(V)}
E(V).
\]
\end{enumerate}
\end{corollary}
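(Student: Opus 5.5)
The statement follows by combining Corollary~\ref{corollary:injectivity of Wh modules}, Proposition~\ref{proposition:decomp of injective} and the descriptions of the socles in Theorems~\ref{theorem:irred sub of WhL} and~\ref{theorem:irred sub of WhLmg}. First I would check that the category $\mathcal{H}_{G}[\Lambda]^{(1)}$ has the finiteness properties needed for Proposition~\ref{proposition:decomp of injective}. Its objects are $\Zg$-modules on which $\Zg$ acts by the single character $\chi_{\Lambda}$. There are finitely many irreducibles with infinitesimal character $\Lambda$. Finally, $\WhL$ and $\WhLmg$ are $K$-admissible of finite length by \cite[Corollary~2.4]{Taniguchi 2013}. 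Finite length is what lets the idempotent-lifting argument of \cite[Part I, Proposition~3.16]{Jan} go through. Concretely, the endomorphism ring of a finite length module is semiperfect, so an injective of finite length splits as a finite direct sum of indecomposable injectives. Each summand is $E(V)$ for $V$ the simple module in its socle, and the socle is simple because an indecomposable injective is the envelope of any simple submodule.

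Since $\WhL$ and $\WhLmg$ are injective in $\mathcal{H}_{G}[\Lambda]^{(1)}$ by Corollary~\ref{corollary:injectivity of Wh modules}, Proposition~\ref{proposition:decomp of injective}(2) gives
\[
\WhL \simeq \bigoplus_{V} E(V)^{\oplus [\soc \WhL : V]},
\]
and similarly for $\WhLmg$. Here $V$ runs over the irreducibles in $\mathcal{H}_{G}[\Lambda]^{(1)}$. An irreducible module in $\mathcal{H}_{G}[\Lambda]$ automatically has infinitesimal character $\Lambda$, so these are the same as the irreducibles in $\mathcal{H}_{G}[\Lambda]_{\mathrm{irr}}$. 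It remains to compute the socle multiplicities.

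For (1), Theorem~\ref{theorem:irred sub of WhL} says that $V$ occurs in $\soc \WhL$ if and only if $\Dim(V)=\dim\lie{n}$, and that in this case its multiplicity is $c_{\dim\lie{n}}(V)$. For (2), Theorem~\ref{theorem:irred sub of WhLmg}(1) says that $V$ occurs in $\soc \WhLmg$ if and only if $\eta\in\WF(V)$. Its multiplicity is $m_{\eta}^{\infty}(V)$ by definition, so (2) is a restatement. For (3), Theorem~\ref{theorem:irred sub of WhLmg}(2) gives $m_{\eta}^{\infty}(V)=1$ when $G$ is quasi-split.

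The main point requiring care is the applicability of Proposition~\ref{proposition:decomp of injective} in $\mathcal{H}_{G}[\Lambda]^{(1)}$, which is not a full module category. Injectivity, and the uniqueness of the envelope, must be understood relative to this subcategory. I would handle this by noting that every object has finite length and that the essential-extension arguments take place entirely inside finite-length objects of the subcategory. A subsidiary point is that the socle multiplicity in Theorem~\ref{theorem:irred sub of WhL} is stated for $\mathcal{H}_{G}[\Lambda]$. Since submodules of $\WhL$ lie in $\mathcal{H}_{G}[\Lambda]^{(1)}$, this causes no discrepancy.
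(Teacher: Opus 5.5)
Your proposal is correct and is essentially the paper's own argument: the paper presents this corollary as an immediate consequence of combining Corollary~\ref{corollary:injectivity of Wh modules}, Proposition~\ref{proposition:decomp of injective}, and the socle descriptions in Theorems~\ref{theorem:irred sub of WhL} and~\ref{theorem:irred sub of WhLmg}. Your extra remarks, that these modules have finite length and that the decomposition takes place inside $\mathcal{H}_{G}[\Lambda]^{(1)}$, only make explicit what the paper leaves implicit when it cites \cite[Part I, Proposition~3.16]{Jan}.
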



\section{Behavior of $\WhL$ and $\WhLmg$ 
under the translation functors}
\label{section:translation of WhL}

Let $\psi_{\Lambda}^{\Lambda'}$ be the translation functor 
(cf. \cite[Chapter VII]{KV}) from 
$\mathcal{H}_{G}[\Lambda]$ to $\mathcal{H}_{G}[\Lambda']$. 
As an application of the injectivity of the modules $\WhL$ 
and $\WhLmg$, 
we will show 
Theorem~\ref{theorem:behavior under the translation functors} 
below. 

\begin{remark}
\label{remark:may assume dominant}
By definition, 
$\WhL$ and $\WhLmg$ satisfy 
$\WhL = \stWh{\eta}{w \Lambda}{\circ}$ and 
$\WhLmg = \stWh{\eta}{w \Lambda}{}$ for any $w \in W$. 
Therefore, when we discuss properties of them, 
in many situations we may restrict our consideration to $\Lambda$
which is contained in a closed Weyl chamber. 
\end{remark}
\begin{definition}
\label{definition:Weyl chamber}
\begin{enumerate}
\item
Let $\lie{h}$ be a Cartan subalgebra of $\lie{g}$. 
Let $\Delta = \Delta(\lie{g},\lie{h})$ be the root system and 
choose a positive system $\Delta^{+}$ of it. 
Define the corresponding positive Weyl chamber $C^{+}$ by 
\begin{align*}
C^{+} 
:= \{\nu \in \lie{h}^{\ast} 
\mid & \
\Real \langle \nu, \alpha \rangle > 0 
\ 
(\forall \alpha \in \Delta^{+})\}
\end{align*}
and the closure of it is denoted by $\overline{C^{+}}$. 
\item
If $G$ is a split group, 
$\lie{a}$ will be used as 
the Cartan subalgebra $\lie{h}$, 
and $\Sigma$, $\Sigma^{+}$ defined in
Section~\ref{section:introduction} are used as the root system and its
positive system, unless otherwise stated. 
\item
For two elements $\Lambda$ and $\Lambda'$ of $\overline{C^{+}}$, 
$\Lambda'$ is called {\it at least as singular as} $\Lambda$ if
$\langle \Lambda', \alpha \rangle = 0$ 
for every $\alpha \in \Sigma$ such that 
$\langle \Lambda, \alpha \rangle = 0$. 
\item
Denote by $\mathcal{L}$ the lattice of weights of finite
dimensional representations of $G$. 
We often regard an element of $\lie{a}^{\ast}$ as a one dimensional 
representation of $A$. 
\end{enumerate}
\end{definition}

\begin{theorem}
\label{theorem:behavior under the translation functors}
Let $G$ be a real reductive linear Lie group. 
Suppose two elements $\Lambda, \Lambda' \in \overline{C^{+}}$ 
satisfy $\Lambda' - \Lambda \in \mathcal{L}$ and 
assume that $\Lambda'$ is at least as singular as $\Lambda$. 
\begin{enumerate}
\item
Both 
$\psi_{\Lambda}^{\Lambda'} \WhL$ and 
$\psi_{\Lambda}^{\Lambda'} \WhLmg$ 
are injective objects in $\mathcal{H}_{G}[\Lambda']^{(1)}$. 
\item
Suppose moreover that $G$ is quasi-split. 
Then 
\begin{align*}
& 
\psi_{\Lambda}^{\Lambda'} \WhL
\simeq 
\stWh{\eta}{\Lambda'}{\circ}, 
&
& 
\psi_{\Lambda}^{\Lambda'} \WhLmg
\simeq 
\stWhmg{\eta}{\Lambda'}{}. 
\end{align*}
\end{enumerate}
\end{theorem}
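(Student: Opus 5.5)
Part (1) follows from adjointness. The translation functors $\psi_{\Lambda}^{\Lambda'}$ and $\psi_{\Lambda'}^{\Lambda}$ are both exact and adjoint to each other on both sides, since each is built from tensoring with a finite-dimensional module and its dual followed by projection to a generalized infinitesimal character. To use this on $\mathcal{H}_{G}[\Lambda']^{(1)}$, I will write $\psi_{\Lambda}^{\Lambda'} = \mathrm{pr}_{\Lambda'}(F \otimes -)$ with $F$ finite dimensional. For $V \in \mathcal{H}_{G}[\Lambda']^{(1)}$ this gives
\[
\Hom_{\gK}(V, \psi_{\Lambda}^{\Lambda'} \WhL)
\simeq
\Hom_{\gK}(\psi_{\Lambda'}^{\Lambda} V, \WhL).
\]
One subtlety is that $\psi_{\Lambda'}^{\Lambda}V$ only has generalized infinitesimal character $\Lambda$, not necessarily an honest one. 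A homomorphism into $\WhL$ factors through the largest quotient $Q(V)$ of $\psi_{\Lambda'}^{\Lambda}V$ on which $\Zg$ acts by $\chi_{\Lambda}$. The functor $V \mapsto Q(V)$ is only right exact. So I will not argue through $Q$; I will instead show that $V\mapsto \Hom_{\gK}(\psi_{\Lambda'}^{\Lambda}V,\WhL)$ is exact on all of $\mathcal{H}_{G}[\Lambda]$.

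This reduces to the exactness statement itself. For any $V$ with generalized infinitesimal character $\Lambda$, the space $\Hom_{\gK}(V,\WhL)$ is identified with $\Wh_{-\eta}(V^{\ast})$ restricted to the vectors on which $\Zg$ acts by $\chi_{\Lambda}$. I therefore go back to Theorem~\ref{theorem:exactness of Wh vector}. Since $\Wh_{-\eta}(V^{*})$ is exact on all of $\mathcal{H}_{G}$, the composite $V\mapsto \Wh_{-\eta}((\psi_{\Lambda'}^{\Lambda}V)^{*})$ is exact, and it equals $\Wh_{-\eta}((F^{*}\otimes V)^{*})$ projected appropriately. Identifying this with $\Hom_{\gK}(V, \psi_{\Lambda}^{\Lambda'}\WhL)$ for $V$ with honest infinitesimal character $\Lambda'$ is the main obstacle. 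I expect to handle it via Kostant's description: for a module killed by $\ker\chi_{\Lambda'}$, the $\eta$-Whittaker vectors in the dual are the Whittaker vectors of $\mathrm{pr}_{\Lambda}(F\otimes \WhL)$ lying in the appropriate $\Zg$-eigenspace. Alternatively, I can use that $\Wh_{\eta}$ commutes with $F\otimes -$ up to a filtration whose graded pieces are copies of $\Wh_\eta$, namely $\dim F$ copies, by Kostant's theorem. The same argument, using Theorem~\ref{theorem:exactness of Wh vector}(2) and the fact that tensoring with $F$ preserves moderate growth, handles $\WhLmg$.

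For part (2), injectivity and Proposition~\ref{proposition:decomp of injective} reduce the claim to comparing socles. Both sides are injective in $\mathcal{H}_{G}[\Lambda']^{(1)}$, so they are isomorphic if and only if their socles agree with multiplicities, that is, $\dim\Hom(V,-)$ agrees for each irreducible $V$. The left side gives $\dim \Wh_{-\eta}((\psi_{\Lambda'}^{\Lambda}V)^{*})$, and the right side gives $\dim\Wh_{-\eta}(V^{*})$. The Whittaker dimension is additive, by the exactness in part (1), and for quasi-split $G$ it equals the number of large composition factors counted appropriately. By Theorem~\ref{theorem:irred sub of WhL} and Theorem~\ref{theorem:irred sub of WhLmg} it also equals the multiplicity of the regular-orbit Bernstein degree or the wave front set. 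These quantities are computed through coherent continuation. Translation to the wall, with $\Lambda'$ more singular, sends the coherent family of characters to the coherent family at $\Lambda'$, so the leading term of the associated cycle at the principal nilpotent orbit is preserved when $\psi_{\Lambda'}^{\Lambda}$ is applied from the singular side and then compared back. Concretely, I will compare with the case of principal series, where $\Wh$ dimensions are $|W/W_{M}|$-type counts independent of singularity, and extend the equality by additivity over the Grothendieck group. I expect the case of $\WhLmg$ to require the quasi-split multiplicity-one statement of Theorem~\ref{theorem:irred sub of WhLmg}(2).
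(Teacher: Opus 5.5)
In part (1) you follow the paper's route (adjunction plus injectivity of $\WhL$). You rightly notice a point that the paper's one-line argument passes over. For $V\in\mathcal{H}_{G}[\Lambda']^{(1)}$, the module $\psi_{\Lambda'}^{\Lambda}V$ in general has only a generalized infinitesimal character, because the center can act non-semisimply after translation off a wall. So Corollary~\ref{corollary:injectivity of Wh modules} does not directly give exactness of $V\mapsto\Hom_{\gK}(\psi_{\Lambda'}^{\Lambda}V,\WhL)$.

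However, you do not resolve this. That Hom space is the subspace of $\Wh_{-\eta}((\psi_{\Lambda'}^{\Lambda}V)^{*})$ on which $\Zg$ acts by the scalar $\chi_{\Lambda}$, and passing to this eigenspace is only left exact. Exactness of the full Whittaker functor (Theorem~\ref{theorem:exactness of Wh vector}) controls only the total space. So does a filtration of $F\otimes V$ with $\dim F$ Whittaker-type graded pieces. The step you call the main obstacle is exactly the content of (1), and it is left as an expectation. The one-sentence treatment of $\WhLmg$ has the same problem.

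In part (2), your reduction to comparing $\dim\Hom_{\gK}(V,-)$ over irreducible $V$ is fine. But the same point then becomes an error: you identify $\dim\Hom_{\gK}(V,\psi_{\Lambda}^{\Lambda'}\WhL)$ with $\dim\Wh_{-\eta}((\psi_{\Lambda'}^{\Lambda}V)^{*})$, dropping the eigenvalue condition. Take $G$ split, $\Lambda$ regular, $\Lambda'$ on exactly one wall $\alpha$, and $V$ a minimal principal series with infinitesimal character $\Lambda'$. Then $\psi_{\Lambda'}^{\Lambda}V$ is filtered by two principal series, with continuous parameters $\Lambda$ and $s_{\alpha}\Lambda$. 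So your left side equals $2\dim\Wh_{-\eta}(V^{*})$. The asserted isomorphism, however, requires $\dim\Hom_{\gK}(V,\psi_{\Lambda}^{\Lambda'}\WhL)=\dim\Hom_{\gK}(V,\stWh{\eta}{\Lambda'}{\circ})=\dim\Wh_{-\eta}(V^{*})$. No coherent-continuation bookkeeping can reconcile these.

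For $\WhLmg$ you would also need that translation preserves $\eta\in\WF$ \cite[Lemma~5.2.6(2)]{M2}. The paper uses this together with invariance of the Bernstein degree \cite[Proposition~4.9]{Vogan GK-dim} to compare socles directly.

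The missing idea is to translate the Whittaker module rather than $V$. For quasi-split $G$, proceed as follows:
\begin{enumerate}
\item Goodman--Wallach/Matumoto and Frobenius reciprocity give $\Hom_{\gK}(W,\WhL)\simeq\Hom_{\Ug}(W,(\indWh{-\Lambda}{-\eta})^{c})$ for Harish-Chandra modules $W$.
\item The adjunction for $F\otimes(\cdot)$ then gives $\Hom_{\gK}(\psi_{\Lambda'}^{\Lambda}V,\WhL)\simeq\Hom_{\Ug}(V,(\psi_{-\Lambda}^{-\Lambda'}\indWh{-\Lambda}{-\eta})^{c})$.
\item Kostant's result behind Lemma~\ref{lemma:translation of duWhL to wall} gives $\psi_{-\Lambda}^{-\Lambda'}\indWh{-\Lambda}{-\eta}\simeq\indWh{-\Lambda'}{-\eta}$, which has an honest infinitesimal character.
\end{enumerate}
Hence $\Hom_{\gK}(V,\psi_{\Lambda}^{\Lambda'}\WhL)\simeq\Hom_{\gK}(V,\stWh{\eta}{\Lambda'}{\circ})$ naturally in $V\in\mathcal{H}_{G}[\Lambda']^{(1)}$. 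This yields (1), and by Yoneda also (2), for $\WhL$ simultaneously.
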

\begin{proof}
In this proof, we will denote $\WhL$ or $\WhLmg$ by $\WhLast$. 

(1) Since $\Lambda'$ is at least as singular as
$\Lambda$ and 
since $\WhLast$ admits the infinitesimal character $\Lambda$, 
the proof of Thorems~7.171, 7.173 of \cite{KV} implies that 
 $\psi_{\Lambda}^{\Lambda'} \WhLast$ admits the infinitesimal
character $\Lambda'$. 
$\psi_{\Lambda}^{\Lambda'} \WhLast$ has finite length since 
$\WhLast$ does. 
It follows that $\psi_{\Lambda}^{\Lambda'} \WhLast$ is an object 
of $\mathcal{H}_{G}[\Lambda']^{(1)}$. 
The injectivity of $\psi_{\Lambda}^{\Lambda'} \WhLast$ follows from 
the injectivity of $\WhLast$, exactness of the translation functor and 
the adjoint property 
\[
\Hom_{\gK}(V, \psi_{\Lambda}^{\Lambda'} \WhLast)
\simeq 
\Hom_{\gK}(\psi_{\Lambda'}^{\Lambda}V, \WhLast). 
\]

(2) Since $\Lambda'$ is at least as singular as $\Lambda$, 
Theorem~7.229 of \cite{KV} states that 
$\psi_{\Lambda}^{\Lambda'}$ sends an irreducible module 
in $\mathcal{H}_{G}[\Lambda]$ to an irreducible module 
in $\mathcal{H}_{G}[\Lambda']$ or $0$. 
Moreover, by definition of ``large'', 
it sends a large module to a nonzero module. 
As stated in Theorem~\ref{theorem:irred sub of WhL}(2), 
for a large irreducible $\brgK$-module $V \in \mathcal{H}_{G}[\Lambda]$, 
the multiplicity of $V$ in $\soc \WhL$ is equal to  the Bernstein 
degree $c_{\dim \lie{n}}(V)$. 
Since $G$ is quasi-split, 
this is equal to $c_{\dim \lie{n}}(\psi_{\Lambda}^{\Lambda'}V)$ 
by Proposition~4.9 of \cite{Vogan GK-dim}. 
It follows that the socles of the two injective modules 
$\psi_{\Lambda}^{\Lambda'}\WhL$ and 
$\stWh{\eta}{\Lambda'}{\circ}$ are isomorphic. 
Therefore, they are isomorphic 
by Proposition~\ref{proposition:decomp of injective}. 

For the modules $\psi_{\Lambda}^{\Lambda'}\WhLmg$ and 
$\WhLmg$, 
the idea of proof is the same; compare their socles. 
If $V$ is an irreducible submodule of $\WhLmg$, then, 
by Theorem~\ref{theorem:irred sub of WhLmg}(2),  
the multiplicity of $V$ in the socle of $\WhLmg$ is one 
and $V$ satisfies $\eta \in \mathrm{WF}(V)$. 
But Lemma~5.2.6(2) of \cite{M2} implies that 
$\psi_{\Lambda}^{\Lambda'}V$ also satisfies 
$\eta \in \mathrm{WF}(\psi_{\Lambda}^{\Lambda'}V)$. 
On the other hand, $\psi_{\Lambda}^{\Lambda'}V$ is irreducible as
stated above. 
It follows that  $\psi_{\Lambda}^{\Lambda'}V$ 
is an irreducible submodule of $\stWh{\eta}{\Lambda'}{}$ 
and its multiplicity in $\soc \stWh{\eta}{\Lambda'}{}$ 
is one because of Theorem~\ref{theorem:irred sub of WhLmg}(2) again. 
From these, we have 
$\soc \psi_{\Lambda}^{\Lambda'} \WhLmg 
\simeq \soc \stWh{\eta}{\Lambda'}{}$, 
which implies 
$\psi_{\Lambda}^{\Lambda'} \WhLmg 
\simeq \stWh{\eta}{\Lambda'}{}$ 
by Proposition~\ref{proposition:decomp of injective}. 
\end{proof}


\section{The global characters of standard Whittaker $\brgK$-modules} 
\label{section:global characters}

In \cite{M0}, Matumoto examined the 
$\brgK$-module 
$\mathcal{A}_{K}(G/N, \eta; \mathcal{M}_{\Lambda})$, 
which is the same as our module $\WhL$. 
If $G$ is a split group, 
it has a filtration whose subquotients are isomorphic to 
principal series modules. 
More precisely, 
\begin{theorem}[{\cite[Corollary~9.2.4]{M0}}]
\label{theorem:Matumoto, comp series}
Suppose $G$ is a split group. 
Then there exist a numeration $w_{1}, w_{2}, \dots, w_{|W|}$ of the
elements of $W$ and $\brgK$-submodules $X_{0}, X_{1}, \dots, X_{|W|}$ of
$\WhL$ such that 
\begin{equation}\label{equation:std filtration of WhL} 
\WhL = X_{|W|} \supset X_{|W|-1} \supset \cdots 
\supset X_{1} \supset X_{0} = \{0\}
\end{equation}
and $X_{i}/X_{i-1}$ ($1 \leq i \leq |W|$) is isomorphic to 
$\Ind_{AN}^{G}(e^{w_{i}\Lambda+\rho} \otimes 1_{N})_{K}$ 
as a $\brgK$-module. 
If $\Lambda$ is taken from the positive closed Weyl chamber 
$\overline{C^{+}}$, then the identity element $e$ and the
longest element $w_{0}$ of $W$ can be taken as $w_{1}$ and $w_{|W|}$, 
respectively. 
\end{theorem}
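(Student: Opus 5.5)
The statement is Matumoto's result \cite[Corollary~9.2.4]{M0}, so the plan is to reconstruct his argument in the language of this paper rather than prove it from scratch.

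The plan is to realize $\WhL$ geometrically and filter it by Bruhat cells. Since $G$ is split, $M$ is finite and $\lie{h}=\lie{a}$ is a Cartan subalgebra. A function in $\WhL$ is a $\Zg$-eigenfunction on $G/N$ with $\eta$-equivariance on the right. I would study it through its asymptotic behaviour along $A$ and its restriction to $K$, using the decomposition $G=KAN$. The first step is to reduce the $\Zg$-eigen-equation to a system on $A$. Via the Harish-Chandra homomorphism (radial part along $KAN$ twisted by $\eta$), this becomes a holonomic system of Whittaker type on $A$: a quantum Toda-type system with a $K$-type twist. Its formal solution space at infinity in the negative chamber has dimension $|W|$ for each $K$-type. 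The leading exponents are $w\Lambda-\rho$ (up to the $\rho$ shift), $w\in W$.

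Next I would define the filtration by leading exponents. Choose an ordering $w_{1},\dots,w_{|W|}$ compatible with the partial order on $\Real(w\Lambda)$, refining a linear functional that is generic on $\Lambda$'s orbit. For $\Lambda\in\overline{C^{+}}$, this puts $e$ at one extreme and $w_{0}$ at the other. Then set $X_{i}$ to be the functions whose asymptotic expansion involves only the exponents $w_{1}\Lambda,\dots,w_{i}\Lambda$. These subspaces are $\brgK$-stable, because $\lie{g}$ and $K$ act by left translation, which commutes with taking asymptotics on the right along $A$.

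The map $X_{i}/X_{i-1}\to\Ind_{AN}^{G}(e^{w_{i}\Lambda+\rho}\otimes 1_{N})_{K}$ is the leading-coefficient (boundary value) map. It sends $f$ to the coefficient function of $a^{w_{i}\Lambda+\rho}$, viewed as a function on $G$ that is right $AN$-equivariant. The points to check are that the image lies in the principal series and that the map is injective on the quotient, which both follow from the definition of the filtration. The step I expect to be the main obstacle is surjectivity, equivalently the count $\dim\Hom_{K}(\delta,\WhL)=|W|\dim\delta^{M}$, matching $\dim\Hom_{K}(\delta,\Ind)=\dim\delta^{M}$. For this I would use that every formal solution converges, i.e.\ existence of genuine Jacquet-type Whittaker functions for each exponent, together with a dimension count of the solution space of the holonomic system. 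Resonances at integral or singular $\Lambda$ introduce logarithmic terms and collisions of exponents. Here I would first try a deformation argument: prove the statement for generic $\Lambda$, then use continuity in $\Lambda$ of the $K$-type multiplicities, which are polynomially flat, and the semicontinuity of the filtration to pass to the limit. The ordering must be chosen so that collisions only merge adjacent steps consistently.

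Finally, the claim about $w_{1}=e$ and $w_{|W|}=w_{0}$ comes from the choice of ordering: for $\Lambda$ dominant, $e\Lambda$ and $w_{0}\Lambda$ are the extreme elements of the orbit for the chosen functional. In the singular case this needs a tie-breaking convention consistent with the limiting argument.
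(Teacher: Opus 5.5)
\textbf{Verdict: genuine gap.} Your outline matches, in spirit, the argument of \cite{M0} that the paper relies on. The paper itself gives no proof beyond the reduction from $G_{0}$ via the exact functor $\Gamma_{\lie{g},K_{0}}^{\gK}$ in Remark~\ref{remark:generalization to our groups}, and it notes that Matumoto uses the boundary value maps of Kashiwara--Oshima and Oshima.

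\textbf{Where the argument fails.} For non-resonant $\Lambda$ your sketch is fine, but there $\WhL$ is simply a direct sum of principal series. The content of the theorem is the resonant case ($w\Lambda-w'\Lambda\in\Z\Sigma$, or even $w\Lambda=w'\Lambda$). That is exactly the step you hand to a deformation argument, and it does not work as stated. Realize $\stWh{\eta}{\Lambda+t\mu}{\circ}$ as a flat family and take limits of the partial sums $\bigoplus_{j\leq i}\Ind_{AN}^{G}(e^{w_{j}(\Lambda+t\mu)+\rho}\otimes 1_{N})_{K}$. This does give a $\brgK$-stable filtration of $\WhL$ whose graded pieces have the $K$-types and the global character of $\Ind_{AN}^{G}(e^{\Lambda+\rho}\otimes 1_{N})_{K}$, which is enough for Corollary~\ref{corollary:character of WhL}. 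But nothing identifies the $i$-th piece with $\Ind_{AN}^{G}(e^{w_{i}\Lambda+\rho}\otimes 1_{N})_{K}$:
\begin{itemize}
\item All the modules $\Ind_{AN}^{G}(e^{w\Lambda+\rho}\otimes 1_{N})_{K}$, $w\in W$, have the same character but are in general not isomorphic. For $\Lambda=\rho$, the trivial representation is a quotient, not a submodule, of the one with $w=e$, and a submodule of the one with $w=w_{0}$.
\item The specialization of a generically irreducible family depends on the lattice chosen. For example, a Verma module and its dual are limits of the same generic family.
\end{itemize}
So the limit controls neither the isomorphism classes nor the placement of $e$ and $w_{0}$. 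The paper later uses that placement through the surjection $\beta$ in Lemma~\ref{lemma:genPS to Pi0-2}.

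\textbf{What is actually needed} is the leading-term map itself at the given $\Lambda$, which Matumoto obtains from Oshima's theory of boundary values. Defining $X_{i}$ by the support of the expansion is harmless, since $K$ and $\lie{g}$ only shift exponents by sums of simple roots (through the factors $\eta(\Ad(a^{-1})X)$). What is not automatic is that the $w_{i}$-coefficient is well defined, equivariant and injective on $X_{i}/X_{i-1}$; this fails to be ``by definition'' once exponents are resonant.
\begin{itemize}
\item The tail of one series can contain the leading exponent of another, so the ordering must be adapted to these shifts.
\item You need the formal solution theory of the system to show that a solution supported on the first $i$ exponent cones with vanishing $w_{i}$-coefficient is supported on the first $i-1$.
\item When $\Lambda$ is singular, the steps cannot be separated by exponents at all. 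The leading terms carry powers of $\log a$, and the boundary value maps must be the coefficients of the successive logarithmic powers.
\end{itemize}
Only once these maps exist does your counting argument give surjectivity.

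\textbf{Two smaller corrections.} First, $\Ind_{AN}^{G}(e^{\nu+\rho}\otimes 1_{N})_{K}$ restricts to $K$ as $C(K)$. The multiplicities to match are therefore $\dim\delta$ and $|W|\dim\delta$, not $\dim\delta^{M}$ and $|W|\dim\delta^{M}$; the latter belong to $\Ind_{MAN}^{G}(1_{M}\otimes e^{\nu+\rho}\otimes 1_{N})_{K}$. Second, the solutions with a prescribed exponent are the convergent series solutions at the regular singularity (Goodman--Wallach, Kashiwara--Oshima), not Jacquet integrals. Jacquet integrals span only $\WhLmg$, which carries $1/|W|$ of the character of $\WhL$.
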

\begin{remark}
\label{remark:generalization to our groups}
Though $G$ is assumed to be a real connected split semisimple Lie
group with finite center in \cite{M0}, 
it is easy to generalize its result 
for a general split group. 
In fact, suppose $G$ is a split group. 
Let $G_{0}$ be the connected component of $G$ containing
the identity element and $K_{0} := G_{0} \cap K$. 
Since $G_{0}$ is isomorphic to the product of the center of $G_{0}$
and the semisimple part of it, 
Theorem~\ref{theorem:Matumoto, comp series} for $G_{0}$ follows from 
Corollary~9.2.4 of \cite{M0}. 
Moreover, we will see in 
Section~\ref{section:realization, cohomological induction} 
that 
$\WhL$ is isomorphic to 
$\Gamma_{\lie{g}, \{e\}}^{\gK}
(\Hom_{\Zg \otimes \Un}(\Ug, \C_{\eta, \Lambda}))
\simeq 
\Gamma_{\lie{g},K_{0}}^{\gK}\stWh{\eta}{\Lambda}{G_{0},\circ}$. 
Here, $\Gamma$ is the Zuckerman functor. 
This is an analogue of the isomorphism 
$\Ind_{AN}^{G}(e^{\Lambda+\rho} \otimes 1_{N})_{K} 
\simeq 
\Gamma_{\lie{g}, \{e\}}^{\gK}
(\pro_{\lie{a}+\lie{n}}^{\lie{g}}\C_{\Lambda+\rho})
\simeq 
\Gamma_{\lie{g},K_{0}}^{\gK}\Ind_{AN}^{G_{0}}(e^{\Lambda+\rho} \otimes
1_{N})_{K}$ 
(cf. Section~\ref{section:realization, cohomological induction}). 
By these isomorphisms and the exactness of 
$\Gamma_{\lie{g},K_{0}}^{\gK}$, 
Theorem~\ref{theorem:Matumoto, comp series} for $G$ 
follows from the $G_{0}$ case of it. 
\end{remark}
\begin{corollary}\label{corollary:character of WhL}
Suppose $G$ is a split group. 
The global character of $\WhL$ is given by 
\begin{equation}\label{eq:global character of WhL} 
\ch \WhL 
= 
|W|\, \ch (\Ind_{AN}^{G}(e^{\Lambda+\rho} \otimes 1_{N})_{K}). 
\end{equation}
\end{corollary}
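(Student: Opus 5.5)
The plan is to deduce the character formula directly from the filtration in Theorem~\ref{theorem:Matumoto, comp series}, extended to a general split group by Remark~\ref{remark:generalization to our groups}, combined with the fact that principal series with Weyl-conjugate parameters have the same global character.

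First, the global character is additive in short exact sequences of Harish-Chandra modules. The filtration \eqref{equation:std filtration of WhL} has $|W|$ steps, and its subquotients are
\[
X_{i}/X_{i-1} \simeq \Ind_{AN}^{G}(e^{w_{i}\Lambda+\rho} \otimes 1_{N})_{K}, \qquad 1 \leq i \leq |W|.
\]
Additivity therefore gives
\[
\ch \WhL = \sum_{i=1}^{|W|} \ch \Ind_{AN}^{G}(e^{w_{i}\Lambda+\rho}\otimes 1_{N})_{K}.
\]
By Remark~\ref{remark:may assume dominant} we may take $\Lambda$ in $\overline{C^{+}}$ if convenient, although this is not needed below.

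Second, we show that each summand equals $\ch \Ind_{AN}^{G}(e^{\Lambda+\rho}\otimes 1_{N})_{K}$. Since $G$ is split, $\lie{a}$ is a Cartan subalgebra and $M$ is finite, so $\Ind_{AN}^{G}(e^{\nu+\rho}\otimes 1_N) = \bigoplus_{\sigma\in\widehat{M}} (\dim\sigma)\,\Ind_{MAN}^{G}(\sigma\otimes e^{\nu+\rho}\otimes 1_N)$.

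The character of a minimal principal series is given by the standard formula: it is supported on the conjugates of $MA$ and is expressed through $\sum_{w\in W} e^{w\nu}$ together with $\sigma$ on $M$. This expression is invariant under $\nu \mapsto w\nu$ once one sums over the $W$-orbit of $\sigma$. Here one must be careful: $W = N_K(\lie a)/M \cap K$ acts on $\widehat M$, and the pair $(\sigma,\nu)$ is replaced by $(w\sigma, w\nu)$. Since the sum runs over all $\sigma\in\widehat M$ with weight $\dim\sigma$, it is $W$-stable, so $\ch \Ind_{AN}^G(e^{w\nu+\rho}\otimes 1_N)_K = \ch\Ind_{AN}^G(e^{\nu+\rho}\otimes1_N)_K$.

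Alternatively, one can argue without the formula: $\Ind_{MAN}^{G}(\sigma\otimes e^{\nu+\rho})$ and $\Ind_{MAN}^{G}(w\sigma\otimes e^{w\nu+\rho})$ have the same composition factors. This is the standard consequence of intertwining operators, or of the character formula for generic $\nu$ extended by continuity in $\nu$ (coherent continuation).

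Summing the $|W|$ equal terms yields \eqref{eq:global character of WhL}. The only delicate point is the $W$-invariance in the second step, specifically keeping track of the $M$-types in the nonconnected case. Using the induced module from $AN$, which contains every $\sigma$ with multiplicity $\dim \sigma$, avoids this issue.
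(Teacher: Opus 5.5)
Your argument is correct and is essentially the paper's own: the corollary is read off from Matumoto's filtration (Theorem~\ref{theorem:Matumoto, comp series}, extended to general split groups via Remark~\ref{remark:generalization to our groups}) by additivity of global characters, together with the standard fact that $\Ind_{AN}^{G}(e^{w\Lambda+\rho}\otimes 1_{N})_{K}$ and $\Ind_{AN}^{G}(e^{\Lambda+\rho}\otimes 1_{N})_{K}$ have the same character for $w \in W$. Your extra bookkeeping of the $\dim\sigma$ multiplicities and of the $W$-action on $\widehat{M}$ simply makes that standard step explicit.
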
 

In order to determine the global character of the module $\WhLmg$, 
we prepare one proposition on the structure of a submodule of
$\WhLmg$ constructed from a Jacquet integral. 
It will also be used in the calculation for the examples 
in Section~\ref{section:examples}. 

Let $P' = M'A'N'$ be a cuspidal parabolic subgroup of $G$ such that
$N'$ is contained in $N$ and $A'$ is contained in $A$. 
Let $\overline{N}'$ be the nilpotent subgroup opposite to $N'$. 
Denote the restriction of $\eta$ to $N'$ by $\eta_{P'}$ and to 
$M' \cap N$ by $\eta_{M'}$. 
Put 
$\rho' := (1/2)\tr (\ad(\lie{a}'))|_{\lie{n'}} \in (\lie{a}')^{\ast}$. 
For convenience, 
we realize the standard module induced from 
a discrete series $(\lie{m}', M' \cap K)$-module $(\sigma, V_{\sigma})$ 
and $\nu \in (\lie{a}')^{\ast}$ 
by 
$X(\sigma, \nu) 
:= 
\Ind_{M'A'\overline{N}'}^{G}
(\sigma \otimes e^{-\nu-\rho'} \otimes 1_{\overline{N}'})_{K}$, 
namely we realize it as a principal series module 
induced from the opposite parabolic $M'A'\overline{N}'$. 
We use the compact picture of this module, i.e. 
we identify the space $X(\sigma, \nu)$ with 
$\Ind_{M' \cap K}^{K}(\sigma|_{M' \cap K})_{K}$. 
Let $\lambda$ be an element of 
$\Wh_{-\eta_{M'}}^{-\infty}(V_{\sigma})$.
For the Jacquet integral, the following theorem is known 
%
\begin{theorem}
\label{theorem:holomorphy of Jacquet}
{\rm (Wallach \cite[Theorem~15.4.1]{Wallach real reductive II})}
Under the above situation, 
assume $\nu$ satisfies $\Real \langle \nu, \alpha \rangle > 0$ 
for every root $\alpha \in \Sigma(\lier{n}', \lier{a}')$. 
Then 
\[
J_{\sigma, \nu}(\lambda)(f)
:= 
\int_{N'} 
\lambda(f(n'))
\eta_{P'}(n')\, dn'
\]
converges absolutely for 
$f \in \Ind_{M' \cap K}^{K}(\sigma|_{M' \cap K})_{K}
\simeq 
X(\sigma, \nu)$ and 
the map $\nu \mapsto J_{\sigma, \nu}(\lambda)(f)$ is 
holomorphic. 
Moreover 
\[
J_{\sigma, \nu} : 
\Wh_{-\eta_{M'}}^{-\infty}(V_{\sigma}) 
\rightarrow 
\Wh_{-\eta}^{-\infty}(X(\sigma, \nu)) 
\]
is a linear bijection. 
\end{theorem}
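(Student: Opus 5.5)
The plan is to follow the standard route for Jacquet integrals: first establish convergence and holomorphy, then show injectivity and finally surjectivity of $J_{\sigma,\nu}$. This is Wallach's theorem, so we only sketch its architecture.

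\textbf{Convergence and holomorphy.} Work in the compact picture and write $n' \in N'$ via the Iwasawa decomposition $n' = k(n')\, m(n')\, a(n')\, \bar n(n')$ relative to $K M' A' \overline{N}'$. Then
\[
f(n') = \sigma(m(n'))^{-1} e^{(\nu+\rho')\log a(n')} f(k(n')).
\]
Since $\lambda$ is a continuous functional on the smooth globalization of $V_\sigma$, there is a Sobolev-type seminorm with
\[
|\lambda(\sigma(m)^{-1} v)| \le C\, \|m\|^{d}\, q(v).
\]
We then use the standard estimate (Harish-Chandra's $c$-function integral) that
\[
\int_{N'} e^{\Real(\nu+\rho')\log a(n')}\,(1+\|\log a(n')\|)^{d}\, dn' < \infty
\]
whenever $\Real\langle\nu,\alpha\rangle>0$ for all $\alpha\in\Sigma(\lier{n}',\lier{a}')$. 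The factor $\|m(n')\|^{d}$ is absorbed by the polynomial weight, because $M'$-components of $N'$ grow at most polynomially in $a(n')$. This gives absolute convergence, locally uniformly in $\nu$. Holomorphy then follows from Morera's theorem, since the integrand is holomorphic in $\nu$.

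\textbf{Whittaker property.} By the substitution $n'\mapsto n'n_0$, the functional $J_{\sigma,\nu}(\lambda)$ transforms under $N'$ by $\eta_{P'}^{-1}$. Under $M'\cap N$ it transforms through $\lambda$, which transforms by $-\eta_{M'}$. Together these give the $-\eta$ character on $N=(M'\cap N)N'$. Continuity in the $C^\infty$ topology follows from the same seminorm estimate.

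\textbf{Bijectivity.} Injectivity comes from evaluating on $f$ supported near the open cell. Using the noncompact picture on $N'\,M'A'\overline{N}'$, one takes $f(n'm a \bar n)$ with $f|_{N'}=\phi\otimes v$ for $\phi\in C_c^\infty(N')$. Then
\[
J(\lambda)(f)=\Big(\int_{N'}\phi\,\eta_{P'}\Big)\lambda(v),
\]
which is nonzero when $\lambda\neq0$. Density of such functions in the smooth induced representation, together with continuity, completes this step.

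Surjectivity is the main obstacle. For it we compare dimensions via Bruhat theory: any $-\eta$-Whittaker distribution vector on $X(\sigma,\nu)_\infty$ is analyzed by the Bruhat filtration of $G/\overline{P}'$ by $N$-orbits. On every non-open orbit, non-degeneracy of $\eta$ forces the restricted distribution to vanish, because some simple root space acts trivially on the relevant part of the induced data while $\eta$ is nontrivial on it. Hence
\[
\dim \Wh^{-\infty}_{-\eta}(X(\sigma,\nu)) \le \dim \Wh^{-\infty}_{-\eta_{M'}}(V_\sigma),
\]
the right-hand side being the contribution of the open cell. This is finite by Theorem~\ref{theorem:exactness of Wh vector}. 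Injectivity then forces equality.

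The hardest technical point is controlling the transversal derivatives along the lower-dimensional orbits; for that step we would invoke Casselman's argument as in \cite{CHM}.
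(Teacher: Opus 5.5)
The paper gives no proof of this statement; it quotes it from \cite{Wallach real reductive II}, so your sketch has to stand on its own. The algebraic parts are an acceptable outline:
\begin{itemize}
\item the $-\eta$-equivariance of $J_{\sigma,\nu}(\lambda)$;
\item injectivity, by testing on an $f$ supported in the open cell $N'\overline{P}'$ (the density remark is unnecessary, since one $f$ with $J(\lambda)(f)\neq 0$ suffices);
\item the Bruhat-filtration upper bound in the spirit of \cite{CHM}.
\end{itemize}
For the last point, take minimal-length representatives $w$ of $W/W_{M'}$. Then a simple root $\alpha$ with $w^{-1}\alpha<0$ satisfies $w^{-1}\alpha\in-\Sigma(\lier{n}',\lier{a}')$. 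Hence $N_{\alpha}$ acts trivially on the fibre and unipotently on the transversal jets.

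The genuine gap is the convergence step, which is where the hypothesis that $\sigma$ is a discrete series must enter. Your argument never uses that hypothesis. The claim that $\|m(n')\|^{d}$ is absorbed by $(1+\|\log a(n')\|)^{d}$ is false: $\|m(n')\|$ grows like a positive power of $a(n')^{-1}$, i.e.\ exponentially in $\log a(n')$. (For the minimal parabolic $M$ is compact, which is why the crude argument looks plausible; here $M'$ is not.)

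A concrete example: take $G=SL(3,\R)$, $P'$ the $(2,1)$ block parabolic, and $n'=1+x_{1}E_{13}+x_{2}E_{23}$. A direct computation of $n'=k\,m\,a\,\bar{n}$ gives
\begin{itemize}
\item $\|m(n')\|\asymp(1+|x|^{2})^{1/4}$;
\item $e^{\rho'(\log a(n'))}=(1+|x|^{2})^{-3/4}$;
\item $\|\log a(n')\|\asymp\log(1+|x|^{2})$.
\end{itemize}
Normalize $\nu$ so that $e^{\nu(\log a(n'))}=(1+|x|^{2})^{-s/2}$; the positive chamber is then $\Real s>0$. Your majorant $\|m(n')\|^{d}e^{\Real(\nu+\rho')(\log a(n'))}$ is integrable over $N'\simeq\R^{2}$ only for $\Real s>(d+1)/2$. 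Even a bounded coefficient ($d=0$) gives only $\Real s>1/2$.

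So the moderate-growth bound yields absolute convergence only on a translate of the chamber that depends on $\lambda$. Holomorphic continuation from there would not give the absolute convergence the theorem asserts on all of $\Real\langle\nu,\alpha\rangle>0$.

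What is missing is the following two-step argument.
\begin{enumerate}
\item Write $M'=(M'\cap K)A_{M'}N_{M'}$ and use $\lambda\circ\sigma(n)=\eta(n)\lambda$ for $n\in M'\cap N$. This reduces $\lambda(f(n'))$ to $\lambda(\sigma(a_{M'}(n'))^{-1}u)$, with $u$ in a fixed finite-dimensional $(M'\cap K)$-stable subspace.
\item Apply the sharp estimates for Whittaker coefficients of the discrete series $\sigma$ (cf.\ \cite{van den Ban} and Wallach's estimates):
\begin{itemize}
\item the tempered bound $a^{\rho_{M'}}(1+\|\log a\|)^{d}$, where $\rho_{M'}$ is the half-sum for $M'\cap N$;
\item rapid decay in the directions forced by non-degeneracy of $\eta_{M'}$, obtained from iterating $\lambda\circ\sigma(X)=\eta(X)\lambda$ for $X$ in simple root spaces of $M'$.
\end{itemize}
\end{enumerate}
Both ingredients are needed: in the example above, the tempered bound alone does not suffice, while together they give convergence for all $\Real s>0$. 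The same estimate also supplies the locally uniform domination behind your Morera argument. It is likewise what you need for the continuity of $J_{\sigma,\nu}(\lambda)$ on the smooth globalization.
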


Under the above situation, 
let us consider the ``analytic continuation" of the image of 
a Jacquet integral. 
We will construct a decreasing family of $\brgK$-submodules of
$X(\sigma, \nu)$ and a increasing family of $\brgK$-submodules of
$\WhLmg$, 
where $\Lambda$ is the infinitesimal character of $X(\sigma, \nu)$. 

Let $\epsilon$ be a sufficiently small positive constant. 
For $t \in D(\epsilon) := \{t \in \C \mid |t| < \epsilon\}$, 
consider the Jacquet integral $J_{\sigma, \nu + t \rho'}(\lambda)$ 
for $\lambda \in \Wh_{-\eta_{M'}}^{-\infty}(V_{\sigma})$, 
$\lambda \not= 0$. 
Since $J_{\sigma, \nu + t \rho'}(\lambda)$ is an element of 
$\Wh_{\eta}^{-\infty}(X(\sigma, \nu+t\rho'))$, 
\cite{W} implies that the function 
\[
J_{\sigma, \nu + t \rho'}(\lambda)(f)(x)
:=
J_{\sigma, \nu + t \rho'}(\lambda)(\ell(x^{-1})f) 
\quad 
(x \in G) 
\]
on $G$ 
for $f \in \Ind_{M' \cap K}^{K}(\sigma|_{M' \cap K})_{K}$ is contained
in $\stWh{\eta}{\Lambda+t\rho'}{}$, 
and it is holomorphic in $t$. 
Hereafter, we will abbreviate the variable $x$ and regard 
$J_{\sigma, \nu+t \rho'}(f)$ as an element of 
$\stWh{\eta}{\Lambda+t\rho'}{}$. 

For each $\delta \in \widehat{K}$, 
let $V(\delta)$ be the $\delta$-isotypic subspace of 
$\Ind_{M' \cap K}^{K}(\sigma|_{M' \cap K})_{K}$ 
and 
let $d(\delta)$ be its dimension. 
Choose a basis $\{f_{\delta,i}\}_{i=1}^{d(\delta)}$ of $V(\delta)$. 
We regard $f_{\delta,i}$ 
($\delta \in \widehat{K}$, $1 \leq i \leq d(\delta)$) 
as elements of 
$\Ind_{M' \cap K}^{K}(\sigma|_{M' \cap K})_{K}
\simeq 
X(\sigma, \nu+t\rho')$, $t \in D(\epsilon)$.  
Define 
\[
v_{\delta,i}(t) 
:= J_{\sigma, \nu+t\rho'}(\lambda)(f_{\delta,i}) 
\in \stWh{\eta}{\Lambda+t\rho'}{} 
\qquad
(i=1,2,\dots,d(\delta)). 
\]
These are holomorphic on $D(\epsilon)$. 
By taking $\epsilon$ small enough, 
we may assume 
$\langle \alpha \nu +t\rho', \check{\alpha} \rangle 
\not\in \Z$ 
for every non-imaginary root $\alpha$ when $t$ is contained in 
$D(\epsilon)^{\times} 
:= 
\{t \in \C \mid 0 < |t| < \epsilon\}$. 
In this case, $X(\sigma, \nu + t \rho')$ is irreducible 
by Theorem~1.1 of \cite{Speh-Vogan}. 
Since we are assuming $\lambda$ to be non-zero, 
if $t\not=0$, then $\{v_{\delta,i}(t)\}_{i=1}^{d(\delta)}$ are
linearly independent and they form a basis of the $\delta$-isotypic
subspace of the image of $J_{\sigma, \nu+t\rho'}(\lambda)$. 
By Proposition~2.21 of \cite{OS} and its proof, 
there exist non-negative integers $m_{\delta,i}$ 
and polynomial functions 
$c_{\delta,i,j}(t)$ ($i,j = 1, 2, \dots, d(\delta)$, $i \not= j$) 
such that 
\[
\widetilde{v}_{\delta,i}(t) 
= 
t^{-m_{\delta,i}} 
\left(
v_{\delta,i}(t) 
+ 
\sum_{j=1, \not=i}^{d(\delta)} 
c_{\delta,i,j}(t)\, v_{\delta,j}(t)
\right), 
\quad 
(i=1,2,\dots,d(\delta))
\]
are linearly independent if $t \in D(\epsilon)$ 
(even if $t=0$). 
Note that the non-negativity of $m_{\delta,i}$ comes from 
the vectors $v_{\delta,i}(t)$ ($1 \leq i \leq d(\delta)$) 
being holomorphic at $t=0$. 
Since $v_{\delta,1}(t), v_{\delta,2}(t), \dots, v_{\delta,d(\delta)}(t)$ 
are of moderate growth uniformly in $t \in D(\epsilon)$ 
(cf. \cite[\S14]{van den Ban}), 
$\tilde{v}_{\delta,1}(0), \tilde{v}_{\delta,2}(0), 
\dots, \tilde{v}_{\delta, d(\delta)}(0)$ are also
of moderate growth, 
and they admit the infinitesimal character $\Lambda$, 
namely they are contained in $\WhLmg$. 

Let $d(\delta)_{k}$ be the number of $\widetilde{v}_{\delta,i}(t)$ 
which satisfies $m_{\delta,i} \leq k$. 
By renumbering the subscript $i$, 
we arrange $\widetilde{v}_{\delta,i}(t)$ so that 
\begin{center}
if $d(\delta)_{k-1}+1 \leq i \leq d(\delta)_{k}$, 
then $m_{\delta,i} = k$. 
\end{center}
We put 
\[
\widetilde{f}_{\delta,i} 
:= 
f_{\delta,i} 
+ 
\sum_{j=1, \not=i}^{d(\delta)} 
c_{\delta,i,j}(t)\, f_{\delta,j} 
\in
\Ind_{M' \cap K}^{K}(\sigma|_{M' \cap K})_{K}, 
\quad 
(i=1,2,\dots,d(\delta)). 
\]
Then 
\[
\widetilde{v}_{\delta,i}(t) 
= t^{-m_{\delta,i}} J_{\sigma, \nu+t\rho'}(\lambda)
(\widetilde{f}_{\delta,i}). 
\]

Define subspaces $K^{(k)}$ ($k= 0,1,2, \dots$) 
of $\Ind_{M' \cap K}^{K}(\sigma|_{M' \cap K})_{K} 
\simeq 
X(\sigma, \nu)$ by 
\[
K^{(k)} 
:= 
\mathrm{Span}
\{\left. \widetilde{f}_{\delta,i} \right|_{t=0} 
\mid 
\delta \in \widehat{K}, \ 
i = d(\delta)_{k-1}+1, \dots, d(\delta)\}
\]
and subspaces $L^{(k)}$ ($k=0,1,2,\dots$) of $\WhLmg$ by 
\[
L^{(k)} 
:= 
\mathrm{Span}
\{\widetilde{v}_{\delta,i}(0) 
\mid 
\delta \in \widehat{K}, \ 
i=1,2,\dots,d(\delta)_{k}\}. 
\]
Then we have the following proposition: 
\begin{proposition}
\label{proposition:analy. conti of Jacquet} 
For each $k$, 
$K^{(k)}$ is a $\brgK$-submodule of $X(\sigma, \nu)$ and 
$L^{(k)}$ is a $\brgK$-submodule of $\WhLmg$. 
There is a $\brgK$-isomorphism 
\begin{equation}
\label{eq:analy. conti of Jacquet}
L^{(k)}/L^{(k-1)} 
\simeq 
K^{(k)}/K^{(k+1)}. 
\end{equation}
Moreover, the global character of 
$L^{\infty} := 
\cup_{k=0}^{\infty} 
L^{(k)}$ is the same as that of $X(\sigma, \nu)$. 
\end{proposition}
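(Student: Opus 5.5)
The plan is to run a Jantzen-type filtration argument, replacing the explicit span definitions of $K^{(k)}$ and $L^{(k)}$ by intrinsic descriptions in terms of holomorphic families. Write $F:=\Ind_{M' \cap K}^{K}(\sigma|_{M' \cap K})_{K}$ and let $\pi_{t}$ be the $\brgK$-action on $F \simeq X(\sigma,\nu+t\rho')$. In the compact picture, $K$ acts independently of $t$. For $Y \in \lie{g}$, $\pi_{t}(Y)$ is an affine function of $t$ (it is $\pi_{0}(Y)$ plus $t$ times a multiplication-type operator coming from the $\rho'$-twist), and it maps each finite sum of $K$-types into a finite sum of $K$-types. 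Put $J_{t}:=J_{\sigma,\nu+t\rho'}(\lambda)$, viewed as a map $F \to C^{\infty}(G/N;\eta)_{K}$ that intertwines $\pi_{t}$ with $\ell$. Call $f(t)$ a \emph{holomorphic family} if it is a $K$-finite $F$-valued holomorphic function on $D(\epsilon)$ supported on finitely many $K$-types. I then intend to prove the following two descriptions:
\[
K^{(k)}=\{f(0) \mid f(t) \text{ holomorphic family}, \ t^{-k}J_{t}(f(t)) \text{ holomorphic at } t=0\},
\]
\[
L^{(k)}=\{\lim_{t\to 0} t^{-k}J_{t}(f(t)) \mid f(t) \text{ holomorphic family such that the limit exists}\}.
\]
Both identities are checked $K$-type by $K$-type. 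The point is that the basis $\widetilde{f}_{\delta,i}$ produced by \cite[Proposition~2.21]{OS} is adapted in the sense of a Smith normal form: the vectors $t^{-m_{\delta,i}}J_{t}(\widetilde{f}_{\delta,i})$ are holomorphic and linearly independent even at $t=0$. Hence if we expand $f(t)=\sum_{i} a_{i}(t)\widetilde{f}_{\delta,i}$, then $t^{-k}J_{t}(f(t))$ is holomorphic exactly when $\mathrm{ord}_{0}\, a_{i} \geq k-m_{\delta,i}$ for every $i$. Both identities follow directly from this. The $c_{\delta,i,j}(t)$ only produce a unipotent change of basis, so they cause no trouble.

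With these descriptions, the submodule property is immediate. $K$ acts independently of $t$, and for $Y\in\lie{g}$ the family $\pi_{t}(Y)f(t)$ is again holomorphic with value $\pi_{0}(Y)f(0)$ at $t=0$. Moreover
\[
t^{-k}J_{t}(\pi_{t}(Y)f(t))=\ell(Y)\,t^{-k}J_{t}(f(t)).
\]
Since $\ell(Y)$ is a $t$-independent differential operator, it commutes with taking the limit; this limit is taken in each fixed $K$-type, where convergence is uniform on compacta with derivatives, by the uniform moderate growth estimates of \cite[\S14]{van den Ban}. Therefore $K^{(k)}\subset X(\sigma,\nu)$ and $L^{(k)}\subset \WhLmg$ are $\brgK$-stable. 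That the elements of $L^{(k)}$ lie in $\WhLmg$ was already shown before the statement.

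Next I define $\Phi_{k}\colon K^{(k)}\to L^{(k)}/L^{(k-1)}$ by $f(0)\mapsto \lim_{t \to 0} t^{-k}J_{t}(f(t)) \bmod L^{(k-1)}$. To see that it is well defined, take two families with the same value at $0$. They differ by $t\,h(t)$ with $h$ holomorphic, and then $t^{-(k-1)}J_{t}(h(t))$ is holomorphic, so its limit lies in $L^{(k-1)}$. The map is $\brgK$-equivariant by the computation in the previous paragraph, and it is surjective by the description of $L^{(k)}$. For the kernel, the adapted basis shows that the limit lies in $L^{(k-1)}$ iff $f(t)$ can be modified by $t\cdot(\text{holomorphic})$ so that $t^{-(k+1)}J_{t}$ of it becomes holomorphic, i.e. iff $f(0)\in K^{(k+1)}$. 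This gives \eqref{eq:analy. conti of Jacquet}. I expect this kernel identification, i.e. making the Smith-normal-form bookkeeping uniform in $\delta$ while $\lie{g}$ mixes $K$-types, to be the main technical point. The intrinsic descriptions are designed to handle it, since they are basis-free.

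Finally, $K^{(0)}=X(\sigma,\nu)$ and $L^{(-1)}=0$. For each $\delta$ only finitely many $m_{\delta,i}$ occur, so $\bigcap_{k} K^{(k)}=0$. Since $X(\sigma,\nu)$ has finite length, the decreasing chain $K^{(k)}$ stabilizes, hence $K^{(k)}=0$ for $k\gg0$, and correspondingly $L^{\infty}=L^{(k)}$ for $k\gg 0$. Summing \eqref{eq:analy. conti of Jacquet} over $k$ gives
\[
\ch L^{\infty}=\sum_{k}\ch\bigl(L^{(k)}/L^{(k-1)}\bigr)=\sum_{k}\ch\bigl(K^{(k)}/K^{(k+1)}\bigr)=\ch X(\sigma,\nu).
\]
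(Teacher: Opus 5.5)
Your proposal is correct, but it takes a different route from the paper. The paper stays in the adapted basis. It writes $X\widetilde{f}_{\delta,i}=\sum \alpha_{\delta,i}^{\delta',j}(X;t)\,\widetilde{f}_{\delta',j}$ with polynomial coefficients and pushes this through the Jacquet integral to get $X\widetilde{v}_{\delta,i}(t)=\sum \alpha_{\delta,i}^{\delta',j}(X;t)\,t^{m_{\delta',j}-m_{\delta,i}}\,\widetilde{v}_{\delta',j}(t)$. It then uses the independence of the $\widetilde{v}_{\delta',j}(0)$ to force $\alpha_{\delta,i}^{\delta',j}(X;t)$ to vanish to order at least $m_{\delta,i}-m_{\delta',j}$. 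Stability of $K^{(k)}$ and $L^{(k)}$, and the isomorphism $\widetilde{v}_{\delta,i}(0)\mapsto \widetilde{f}_{\delta,i}|_{t=0}$, are read off by comparing the two expansions at $t=0$.

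You use the same two inputs, but only once, to prove the Jantzen-type descriptions. The first input is the independence of the $\widetilde{v}_{\delta,i}(0)$. The second is that the $\widetilde{f}_{\delta,i}|_{t=0}$ still form a basis of $V(\delta)$; what matters here is that the transition matrix is polynomial in $t$ and invertible at $t=0$, not literally unipotent. After that, everything follows formally from $t^{-k}J_{t}(\pi_{t}(Y)f(t))=\ell(Y)\,t^{-k}J_{t}(f(t))$, with no tracking of how $\lie{g}$ mixes $K$-types.

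This buys you three things.
\begin{itemize}
\item The filtrations are visibly independent of the choice of adapted basis, and the isomorphism is a canonical leading-coefficient map.
\item The kernel step you flagged as the main difficulty becomes trivial. If $\lim_{t\to 0}t^{-k}J_{t}(f(t))=\lim_{t\to 0}t^{-(k-1)}J_{t}(h(t))$, then $g:=f-t\,h$ has $g(0)=f(0)$ and $t^{-(k+1)}J_{t}(g(t))$ holomorphic, so $f(0)\in K^{(k+1)}$.
\item Your Artinian argument giving $K^{(k)}=0$ for $k\gg 0$ turns the final character identity into a finite telescoping sum, a point the paper leaves implicit.
\end{itemize}
The paper's route, in exchange, is shorter and more explicit once the basis is fixed.
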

\begin{proof}
Consider the element $\widetilde{f}_{\delta,i}$ which satisfies 
$\delta \in \widehat{K}$ and 
$d(\delta)_{k-1}+1 \leq i \leq d(\delta)_{k}$. 
For $X \in \lie{g}$, we write 
\[
X \widetilde{f}_{\delta,i} 
= 
\sum_{\delta' \in \widehat{K}} 
\sum_{l \geq 0} 
\sum_{j=d(\delta')_{l-1}+1}^{d(\delta')_{l}} 
\alpha_{\delta,i}^{\delta', j}(X; t)\, 
\widetilde{f}_{\delta',j}. 
\]
By the way, we know that, 
if we choose a basis of $\Ind_{M' \cap K}^{K}(\sigma|_{M' \cap K})_{K}$ 
and express the action of $\lie{g}$ on the principal series module 
$X(\sigma, \nu+t \rho')$ 
by using this basis, 
the coefficients are polynomial functions with respect to 
the parameter $\nu + t \rho'$.  
Therefore, the coefficients $\alpha_{\delta,i}^{\delta', j}(X; t)$ 
are polynomials in $t$. 
Since 
\begin{align*}
X \widetilde{v}_{\delta,i}(t) 
&= 
t^{-k} J_{\sigma, \nu+t\rho'}(\lambda)(X \widetilde{f}_{\delta,i}) 
\\
&=
\sum_{\delta' \in \widehat{K}} 
\sum_{l \geq 0} 
\sum_{j=d(\delta')_{l-1}+1}^{d(\delta')_{l}} 
\alpha_{\delta,i}^{\delta', j}(X; t)\, 
t^{-k} 
J_{\sigma, \nu+t\rho'}(\lambda)(\widetilde{f}_{\delta',j}) 
\\
&= 
\sum_{\delta' \in \widehat{K}} 
\sum_{l \geq 0} 
\sum_{j=d(\delta')_{l-1}+1}^{d(\delta')_{l}} 
\alpha_{\delta,i}^{\delta', j}(X; t)\, 
t^{l-k}\, 
\widetilde{v}_{\delta',j}(t) 
\end{align*}
converges at $t=0$ and $\{\widetilde{v}_{\delta',j}(0)\}_{\delta', j}$
is linearly independent, 
$\lim_{t \to 0} \alpha_{\delta,i}^{\delta', j}(X; t) t^{l-k}$
converges even if $l<k$. 
Especially, $\alpha_{\delta,i}^{\delta', j}(X; 0) = 0$ if $j \leq
d(\delta)_{k-1}$. 
This implies that $L^{(k)}$ is $\Ug$-stable. 
Just in the same way, we can check that it is $K$-stable, 
so it is a $\brgK$-module. 
We also have 
\begin{equation}
\label{eq:Xv_delta,i-1}
X \widetilde{v}_{\delta,i}(0) 
\equiv 
\sum_{\delta' \in \widehat{K}} 
\sum_{j=d(\delta')_{k-1}+1}^{d(\delta')_{k}} 
\alpha_{\delta,i}^{\delta', j}(X; t)\, 
\widetilde{v}_{\delta',j}(0) 
\quad 
\mathrm{mod}\, L^{(k-1)}.  
\end{equation}
On the other hand, 
\begin{align}
\left. 
X \widetilde{f}_{\delta,i} 
\right|_{t=0} 
&= 
\sum_{\delta' \in \widehat{K}} 
\sum_{l \geq 0} 
\sum_{j=d(\delta')_{l-1}+1}^{d(\delta')_{l}} 
\alpha_{\delta,i}^{\delta', j}(X; t)\, 
\left. 
\widetilde{f}_{\delta',j}
\right|_{t=0} 
\notag\\
&=
\sum_{\delta' \in \widehat{K}} 
\sum_{l \geq k} 
\sum_{j=d(\delta')_{l-1}+1}^{d(\delta')_{l}} 
\alpha_{\delta,i}^{\delta', j}(X; 0)\, 
\left. 
\widetilde{f}_{\delta',j}
\right|_{t=0} 
\label{eq:Xv_delta,i-2}
\end{align}
since $\alpha_{\delta,i}^{\delta', j}(X; 0)=0$ 
if $j \leq d(\delta)_{k-1}$. 
This shows that $K^{(k)}$ is $\Ug$-stable. 
Th proof that it is $K$-stable is identical. 
We have proved that it is a $\brgK$-module. 
By comparing \eqref{eq:Xv_delta,i-1} and \eqref{eq:Xv_delta,i-2}, 
we obtain the isomorphism \eqref{eq:analy. conti of Jacquet}. 

Finally, by the construction of $\widetilde{f}_{\delta,i}$ and $K^{(k)}$, 
we have $K^{(0)}=X(\sigma, \nu)$ and $\cap_{k=0}^{\infty} K^{(k)} = 0$. 
Since the length of the $\brgK$-module $X(\sigma, \nu)$ is finite, 
\begin{align*}
\ch(L^{\infty}) 
= 
\sum_{k=0}^{\infty} 
\ch(L^{(k)}/L^{(k-1)}) 
\underset{\eqref{eq:analy. conti of Jacquet}}{=} 
\sum_{k=0}^{\infty} 
\ch(K^{(k)}/K^{(k+1)}) 
= 
\ch(K^{(0)}) 
= 
\ch(X(\sigma, \nu)). 
\end{align*}
\end{proof}

By using this proposition, we will show the following theorem.

\begin{theorem}\label{theorem:character of WhLmg} 
Suppose $G$ is a split group. 
For any infinitesimal character $\Lambda$, the grobal character of
$\WhLmg$ is given by 
\begin{equation}
\label{eq:character of WhLmg}
\ch \WhLmg 
= 
\ch(\Ind_{AN}^{G}(e^{\Lambda+\rho} \otimes 1_{N})_{K}). 
\end{equation}
\end{theorem}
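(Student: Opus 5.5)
We will prove \eqref{eq:character of WhLmg} by a lower bound from Proposition~\ref{proposition:analy. conti of Jacquet} and a matching upper bound from injectivity and the socle description. By Remark~\ref{remark:may assume dominant} we may take $\Lambda \in \overline{C^{+}}$.

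\emph{Lower bound.} Apply Proposition~\ref{proposition:analy. conti of Jacquet} with $P'=P=MAN$ the minimal parabolic, so that $M'=M$ and $N'=N$. Choose $\nu \in \lie{a}^{\ast}$ and $\sigma \in \widehat{M}$ so that $X(\sigma,\nu) = \Ind_{MA\overline{N}}^{G}(\sigma \otimes e^{-\nu-\rho}\otimes 1_{\overline{N}})_{K}$ has infinitesimal character $\Lambda$ and the same global character as $\Ind_{AN}^{G}(e^{\Lambda+\rho}\otimes 1_{N})_{K}$. Since $G$ is split, $M$ is a finite abelian group, so each $\sigma$ is a character and $\Wh_{-\eta_{M}}^{-\infty}(V_{\sigma}) = V_{\sigma}$ is one dimensional. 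Also $\Ind_{AN}^{G}(e^{\Lambda+\rho}\otimes 1_N)_K=\bigoplus_{\sigma\in\widehat{M}}\Ind_{MAN}^{G}(\sigma\otimes e^{\Lambda+\rho}\otimes 1_N)_K$, and principal series related by $\nu \mapsto w\nu$ have equal global characters. Taking $\nu = -\Lambda$ up to the Weyl group, $X(\sigma,\nu)$ has the character of the $\sigma$-summand.

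Proposition~\ref{proposition:analy. conti of Jacquet} then gives a submodule $L^{\infty}_{\sigma} \subset \WhLmg$ with $\ch L^{\infty}_{\sigma} = \ch X(\sigma,\nu)$. The holomorphy hypothesis of Theorem~\ref{theorem:holomorphy of Jacquet} is handled by shifting along $t\rho$; the Proposition already works at the limit point, provided one arranges $\Real\langle \nu,\alpha\rangle \ge 0$ by the Weyl group choice. It remains to see that the sum $\sum_{\sigma} L^{\infty}_{\sigma}$ is direct. For this we compare $M$-types, or equivalently the action of the component group, via the $K$-types restricted to $M$: the $\sigma$-summand is characterised by $\lambda$ transforming under $M$ by $\sigma$. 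That direct sum has character $\ch\Ind_{AN}^{G}(e^{\Lambda+\rho}\otimes1_N)_K$, giving $\ge$ in the Grothendieck group.

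\emph{Upper bound.} By Corollary~\ref{corollary:injectivity of Wh modules}, the functor $V\mapsto \Hom_{\gK}(V,\WhLmg)$ is exact on $\mathcal{H}_G[\Lambda]^{(1)}$, and by Theorem~\ref{theorem:exactness of Wh vector}(2) it equals $\dim\Wh^{-\infty}_{-\eta}(V)$. The multiplicity of an irreducible $Y$ in $\WhLmg$ is bounded in terms of these dimensions. The cleanest route is to compute $\dim \Wh^{-\infty}_{-\eta}$ on principal series, which is $1$ for each $\sigma$ by Theorem~\ref{theorem:holomorphy of Jacquet}, so the total over $\Ind_{AN}^{G}$ is $|\widehat{M}|$. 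Then use additivity and the decomposition of Corollary~\ref{corollary:WhL as injective module}(3) to bound the length.

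I expect this upper bound to be the main obstacle. Exactness alone controls $\Hom(V,\WhLmg)$, not composition multiplicities. An alternative is to bound $\WhLmg$ inside $\WhL$: by Corollary~\ref{corollary:character of WhL}, $\WhL$ has character $|W|$ times that of $\Ind$. One then shows that moderate growth cuts this down by a factor $|W|$ via asymptotic expansions: along $A$, the leading exponents of Whittaker functions in $\WhL$ come in $|W|$ families indexed by the filtration of Theorem~\ref{theorem:Matumoto, comp series}, and only one family, the one with $w_{i}$ giving rapid decay, is tempered.

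So the concrete plan is: (i) show $\WhLmg \supseteq \bigoplus_{\sigma} L^{\infty}_{\sigma}$; (ii) show $\WhLmg \subseteq \bigoplus_\sigma L^\infty_\sigma$ by proving every moderate growth Whittaker function with infinitesimal character $\Lambda$ lies in the analytic continuation of Jacquet integrals. For (ii), first treat generic $\Lambda$, where Jacquet integrals give all of $\Wh^{-\infty}$ and hence all of $\WhLmg$, since $\Wh^{-\infty}_{-\eta}$ of the $G$-module generated is exhausted by Theorem~\ref{theorem:holomorphy of Jacquet}. Then pass to arbitrary $\Lambda$ by the translation principle, Theorem~\ref{theorem:behavior under the translation functors}(2), using exactness of $\psi_{\Lambda}^{\Lambda'}$ and the fact that translation of principal series characters is compatible. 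Non-regular but non-singular integral $\Lambda$ will be handled by deforming in $t$, exactly as in the construction preceding Proposition~\ref{proposition:analy. conti of Jacquet}.
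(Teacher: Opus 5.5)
Your proposal has two genuine gaps, and the second is where the substance of the theorem lies. \emph{Lower bound.} The sum $\sum_{\sigma}L^{\infty}_{\sigma}$ is not direct in general, and nothing on $\WhLmg$ can separate the $\sigma$'s: the left action of $M\subset K$ is just part of the $K$-action, and right translation by $m\in M$ turns $\eta$ into $m\cdot\eta$. In fact $\soc\WhLmg$ is multiplicity free (Theorem~\ref{theorem:irred sub of WhLmg}(2)), while at non-generic $\Lambda$ different principal series share large socle constituents. In the $Sp(2,\R)$ example of Section~\ref{section:examples}, the Jacquet images of $X(\gamma_{10})$ and $X(\gamma_{11})$ are both nonzero submodules of $E(\overline{X}(\gamma_{0}))$, whose socle is simple. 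So your two modules $L^{\infty}_{\sigma}$ intersect, and their sum has character strictly smaller than $\ch X(\gamma_{10})+\ch X(\gamma_{11})$. The paper instead runs the construction preceding Proposition~\ref{proposition:analy. conti of Jacquet} once, for the single Jacquet integral $J_{-w_{0}\Lambda}$ on all of $\Ind_{A\overline{N}}^{G}(e^{w_{0}\Lambda-\rho}\otimes 1_{\overline{N}})_{K}\simeq\bigoplus_{\sigma}\Ind_{MA\overline{N}}^{G}(\sigma\otimes e^{w_{0}\Lambda-\rho}\otimes 1_{\overline{N}})_{K}$. Its $\lambda$ has a nonzero component for every $\sigma$, so it is injective for $t\neq0$. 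The renormalized vectors $\widetilde{v}_{\delta,i}(t)$ are then allowed to mix different $\sigma$, and the limit is one submodule of $\WhLmg$ with the full character of $\Ind_{AN}^{G}(e^{\Lambda+\rho}\otimes 1_{N})_{K}$, for $\Lambda\in C^{+}$.

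\emph{Upper bound.} You do not actually have one, and none of the suggested routes supplies it. Translation cannot carry the generic case to the hard case: $\psi_{\Lambda}^{\Lambda'}$ requires $\Lambda'-\Lambda\in\mathcal{L}$, so it never links a generic $\Lambda$ to an integral one. Moreover, Theorem~\ref{theorem:behavior under the translation functors}(2) only moves toward walls. Deforming in $t$ only manufactures submodules, i.e.\ it is the lower bound again, and ``every moderate growth Whittaker function is such a limit'' is essentially the statement to be proved. The asymptotic heuristic (one moderate-growth family out of $|W|$) is not an argument as it stands.

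The missing idea is the paper's Step~I. For $G=G_{\max}$, Kostant's Theorems~K and L give $\soc\WhL\simeq(\soc\WhLmg)^{\oplus|W|}$. By injectivity and Proposition~\ref{proposition:decomp of injective}, $\WhL\simeq(\WhLmg)^{\oplus|W|}$, and Corollary~\ref{corollary:character of WhL} then yields $\ch\WhLmg$ exactly. For a general split $G$ one restricts from a type II envelope $G^{+}$. The decomposition $\res_{\lie{g},K^{+}}^{\gK}\stWh{\eta}{\Lambda}{G^{+}}\simeq\bigoplus_{i=1}^{l}\stWh{y_{i}\cdot\eta}{\Lambda}{G}$, together with the analogous decomposition of $\Ind_{AN}^{G^{+}}$, gives $\sum_{i=1}^{l}\ch\stWh{y_{i}\cdot\eta}{\Lambda}{G}=l\cdot\ch\Ind_{AN}^{G}(e^{\Lambda+\rho}\otimes 1_{N})_{K}$. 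The joint lower bound above, applied to each $y_{i}\cdot\eta$, then forces equality term by term for $\Lambda\in C^{+}$. Only after this does your translation step from $\Lambda+2\rho$ to singular $\Lambda$ legitimately finish the proof.
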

\begin{proof}
We divide the proof into four steps. 
First, we define the group $G^{+}$. 

Let $G^{+}$ be a split group satisfying the
following three conditions:  
\begin{enumerate}
\item
$G \subset G^{+}$, 
\item
the Lie algebra of $G^{+}$ is $\lier{g}$, 
\item
$\Ad G^{+}$ contains the group 
$\{g \in \Ad \lie{g} \mid \Ad(g) \lier{g} \subset \lier{g}\}$. 
\end{enumerate}
Such group exists but is not unique. 
In \cite{M2}, it is called a type II envelope of $G$. 
Note that the number of principal nilpotent $G^{+}$-orbits in
$\lier{g}$ is one.  
We write $G=G_{\max}$ if $G$ itself satisfies the above three
conditions. 
We denote the maximal compact subgroup of $G^{+}$ containing $K$ 
by $K^{+}$.

\noindent
\textbf{Step I. $G = G_{\max}$.}

First, assume $G = G_{\max}$. 
In this case, Theorems~K and L of \cite{Kos} can be generalized for
the split group case 
and they imply 
\begin{equation}\label{equation:socles, in case G=Gmax}
\soc \stWh{\eta}{\Lambda}{G^{+}, \circ} 
\simeq 
(\soc \stWh{\eta}{\Lambda}{G^{+}})^{\oplus |W|}. 
\end{equation}
It follows from \eqref{equation:socles, in case G=Gmax}, 
Corollary~\ref{corollary:WhL as injective module} 
and \eqref{eq:global character of WhL} that 
\begin{equation}\label{equation:global character, G=Gmax}
\ch \stWh{\eta}{\Lambda}{G^{+}} 
= 
\frac{1}{|W|} \ch \stWh{\eta}{\Lambda}{G^{+}, \circ} 
= \ch (\Ind_{AN}^{G^{+}}(e^{\Lambda+\rho} \otimes 1_{N})_{K}). 
\end{equation}

\noindent
\textbf{Step II. 
$G$ is general and $\Lambda$ is generic.} 

Second, consider a general split group $G$. 
If $\Lambda$ is generic, i.e. every irreducible
$\brgK$-module in $\mathcal{H}_{G}[\Lambda]$ is a principal series, 
then Theorem~1.1 of \cite{Taniguchi 2013} implies the isomorphism 
\begin{equation}
\label{eq:WhLmg for generic Lambda}
\WhLmg 
\simeq 
\bigoplus_{\sigma \in \widehat{M}} 
\Ind_{MAN}^{G}(\sigma \otimes e^{\Lambda+\rho}\otimes 1_{N})_{K} 
\simeq 
\Ind_{AN}^{G}(e^{\Lambda+\rho}\otimes 1_{N})_{K}. 
\end{equation}
Therefore 
$\ch \WhLmg = \ch(\Ind_{AN}^{G}(e^{\Lambda+\rho}\otimes 1_{N})_{K})$. 

\noindent
\textbf{Step III. $G$ is general, $\Lambda$ is not generic and 
$\Lambda \in C^{+}$.} 

Third, consider the case when $G$ is general and $\Lambda$ is not 
generic. 
As is explained in Remark~\ref{remark:may assume dominant}, 
we may assume $\Lambda \in \overline{C^{+}}$. 
In this step, we will show \eqref{eq:character of WhLmg} in the case
when $\Lambda \in C^{+}$. 

Let $\{y_{1}, y_{2}, \dots, y_{l}\}$ be a set of representatives of 
$G \backslash G^{+}$. 
Since $M^{+} := Z_{G^{+}}(A)$ meets every component of
$G^{+}$, 
we may take $y_{1}, y_{2}, \dots, y_{l}$ from $M^{+}$. 
Thus we have 
\begin{align}
& 
\res_{\lie{g}, K^{+}}^{\gK} 
\stWh{\eta}{\Lambda}{G^{+}}
\simeq 
\bigoplus_{i=1}^{l} 
\stWh{y_{i} \cdot \eta}{\Lambda}{G}, 
\qquad 
f \mapsto (f_{1}, f_{2}, \dots, f_{l}), 
\label{equation:decomposition of WhL for Gmax to G}
\end{align}
where $f_{i}(g) = f(g y_{i})$ ($g \in G$, $i=1,2,\dots,l$) and 
\[
(y_{i} \cdot \eta)(n) = \eta({y_{i}}^{-1} n y_{i}) \quad (n \in N). 
\]
Note that the infinitesimal character $\chi_{\Lambda}$ 
is fixed by $G^{+}$, since $G^{+}$ is inner. 
In the same way as 
\eqref{equation:decomposition of WhL for Gmax to G}, we have 
\begin{equation}\label{equation:decomposition of PS for Gmax to G}
\res_{\lie{g}, K^{+}}^{\gK} 
\Ind_{AN}^{G^{+}}(e^{\Lambda+\rho} \otimes 1_{N})_{K^{+}}
\simeq 
\bigoplus_{i=1}^{l} 
\Ind_{AN}^{G}(e^{\Lambda+\rho} \otimes 1_{N})_{K}. 
\end{equation}
By \eqref{equation:decomposition of WhL for Gmax to G}, 
\eqref{equation:global character, G=Gmax} and 
\eqref{equation:decomposition of PS for Gmax to G}, 
we obtain the identity 
\begin{equation}\label{equation:1st character corresp} 
\begin{split}
\sum_{i=1}^{l} 
\ch \stWh{y_{i} \cdot \eta}{\Lambda}{G} 
&\underset{\eqref{equation:decomposition of WhL for Gmax to G}}{=} 
\ch \stWh{\eta}{\Lambda}{G^{+}} 
\underset{\eqref{equation:global character, G=Gmax}}{=}
\ch(\Ind_{AN}^{G^{+}}(e^{\Lambda+\rho} \otimes 1_{N})_{K^{+}})
\\
&\underset{\eqref{equation:decomposition of PS for Gmax to G}}{=}
l \times \ch(\Ind_{AN}^{G}(e^{\Lambda+\rho} \otimes 1_{N})_{K}). 
\end{split}
\end{equation}

Now, we apply Proposition~\ref{proposition:analy. conti of Jacquet} 
to the original Jacquet integral (\cite{Jac})
\begin{equation*}
\begin{split}
& 
J_{-w_{0}\Lambda} : 
\Ind_{A\overline{N}}^{G}
(e^{w_{0}\Lambda-\rho}\otimes 1_{\overline{N}})_{K} 
\simeq 
\bigoplus_{\sigma \in \widehat{M}} 
\Ind_{MA\overline{N}}^{G}
(\sigma \otimes e^{w_{0}\Lambda-\rho}\otimes 1_{\overline{N}})_{K} 
\rightarrow 
\stWh{\eta'}{\Lambda}{G}, 
\\ 
& 
J_{-w_{0}\Lambda}(f)(x) 
= 
\int_{N} f(xn) \eta'(n) dn, 
\quad
(x \in G) 
\end{split}
\end{equation*} 
for $\eta'=y_{i} \cdot \eta$. 
If $\Lambda$ is contained in $C^{+}$, 
Proposition~\ref{proposition:analy. conti of Jacquet} implies that 
there are (non-virtual) characters $\Theta_{i}$ 
($i=1,2,\dots,l$) of $\brgK$-modules in $\mathcal{H}_{G}[\Lambda]$ such
that 
\[
\ch \stWh{y_{i} \cdot \eta}{\Lambda}{G} 
= 
\ch(\Ind_{A\overline{N}}^{G}
(e^{w_{0}\Lambda-\rho} \otimes 1_{\overline{N}})_{K}) + \Theta_{i}
= 
\ch(\Ind_{AN}^{G}
(e^{\Lambda+\rho} \otimes 1_{N})_{K}) + \Theta_{i}. 
\]
Therefore, by \eqref{equation:1st character corresp}, 
\[
l \times 
\ch(\Ind_{AN}^{G}(e^{\Lambda+\rho} \otimes 1_{N})_{K}) 
= 
\sum_{i=1}^{l} 
\ch \stWh{y_{i} \cdot \eta}{\Lambda}{G} 
= 
l \times 
\ch(\Ind_{AN}^{G}(e^{\Lambda+\rho} \otimes 1_{N})_{K}) 
+ 
\sum_{i=1}^{l} \Theta_{i}, 
\]
so $\Theta_{i} = 0$ for $i=1,2,\dots,l$. 
It follows that 
$\ch \stWh{y_{i} \cdot \eta}{\Lambda}{G} 
= 
\ch(\Ind_{AN}(e^{\Lambda+\rho} \otimes 1_{N})_{K})$. 

\noindent
\textbf{Step IV. $G$ is general, $\Lambda$ is not generic and 
$\Lambda \in \overline{C^{+}}$.}

In this case, choose a non-singular infinitesimal character, say 
$\Lambda+2\rho \in C^{+}$, 
and use the translation functor 
$\psi_{\Lambda+2\rho}^{\Lambda}$. 
By Theorem~\ref{theorem:behavior under the translation functors}, 
$\psi_{\Lambda+2\rho}^{\Lambda}\stWh{\eta}{\Lambda+2\rho}{} 
\simeq \WhLmg$, and by Step III, 
$\ch(\stWh{\eta}{\Lambda+2\rho}{}) 
= \ch(\Ind_{AN}^{G}(e^{\Lambda+2\rho+\rho} \otimes 1_{N})_{K})$. 
Since 
$\psi_{\Lambda+2\rho}^{\Lambda}$ is a translation functor 
from non-singular to singular, 
we have 
$\psi_{\Lambda+2\rho}^{\Lambda}
\Ind_{AN}^{G}(e^{\Lambda+2\rho+\rho} \otimes 1_{N})_{K}
\simeq \Ind_{AN}^{G}(e^{\Lambda+\rho} \otimes 1_{N})_{K}$. 
Then the theorem also holds for this case. 
\end{proof}


\section{Realization of standard Whittaker $\brgK$-modules by 
cohomological induction}
\label{section:realization, cohomological induction}

As stated in 
Remark~\ref{remark:generalization to our groups}, 
the following isomorphism is known: 
\[
\Ind_{AN}^{G}(e^{\Lambda+\rho} \otimes 1_{N})_{K} 
\simeq 
\Gamma_{\lie{g}, \{e\}}^{\gK}
(\pro_{\lie{a}+\lie{n}}^{\lie{g}}\C_{\Lambda+\rho})
\]
(cf. \cite[Proposition~6.3.5]{Vogan Green} or 
\cite[Propostion~11.47]{KV}). 
Here, $\Gamma$ is the Zuckerman functor and 
$\pro_{\lie{a}+\lie{n}}^{\lie{g}}\C_{\Lambda+\rho}
= 
\Hom_{U(\lie{a}+\lie{n})}(\Ug, \C_{\Lambda+\rho})$. 
We can realize our module $\WhL$ in the same way. 
One of the benefits of this realization is that we can treat the
contragredient module of $\WhL$ by algebraic methods. 

Let $\C_{\Lambda,\eta}$ be the one dimensional 
$\Zg \otimes \Un$ module with basis $1_{\Lambda,\eta}$ defined
by 
\[
(z \otimes u) 1_{\Lambda,\eta} 
= \chi_{\Lambda}(z)\, \eta(u) 1_{\lambda, \eta}, 
\qquad 
(z \in \Zg, u \in \Un). 
\]
\begin{proposition}\label{proposition:realization of WhL by coh-ind} 
Let $G$ be a split group. 
\begin{enumerate}
\item
There is a $\brgK$-module isomorphism 
\begin{equation}
\label{equation:realization of WhL by coh-ind, 1} 
\WhL 
\simeq 
\Gamma_{\lie{g}, \{e\}}^{\gK} 
(\Hom_{\Zg \otimes \Un}(\Ug, \C_{\Lambda, \eta})). 
\end{equation}
Here, $\Zg \otimes \Un$ and $\Ug$ act on $\Ug$ by the left and right
multiplication, respectively. 
\item
For any $i > 0$, 
\[
(\Gamma_{\lie{g}, \{e\}}^{\gK})^{i} 
(\Hom_{\Zg \otimes \Un}(\Ug, \C_{\Lambda, \eta}))
= 0. 
\]
Here, $(\Gamma_{\lie{g}, \{e\}}^{\gK})^{i}$ is the $i$-th derived
functor of ${\Gamma_{\lie{g}, \{e\}}^{\gK}}$. 
\end{enumerate}
\end{proposition}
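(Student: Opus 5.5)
We follow the model of the isomorphism $\Ind_{AN}^{G}(e^{\Lambda+\rho}\otimes 1_{N})_{K}\simeq\Gamma_{\lie{g},\{e\}}^{\gK}(\pro_{\lie{a}+\lie{n}}^{\lie{g}}\C_{\Lambda+\rho})$, in which $\Gamma_{\lie{g},\{e\}}^{\gK}$ extracts the $K$-finite vectors on which the $\lie{k}$-action integrates to $K$. The starting point is Frobenius reciprocity for the coinduced module $P:=\Hom_{\Zg\otimes\Un}(\Ug,\C_{\Lambda,\eta})$. For any $\Ug$-module $V$,
\[
\Hom_{\Ug}(V,P)\simeq\Hom_{\Zg\otimes\Un}(V,\C_{\Lambda,\eta}).
\]
Combining this with the adjunction between $\Gamma_{\lie{g},\{e\}}^{\gK}$ and the forgetful functor, for $V$ a $\brgK$-module we obtain
\[
\Hom_{\gK}(V,\Gamma_{\lie{g},\{e\}}^{\gK}P)\simeq\Hom_{\Zg\otimes\Un}(V,\C_{\Lambda,\eta}).
\]
When $V$ has infinitesimal character $\Lambda$, the right-hand side is $\Wh_{-\eta}(V^{\ast})$ up to the sign convention for $\eta$.

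For part (1), we write the map down concretely. Let $f\in\WhL$. We send it to $\phi_f\in P$ defined by $\phi_f(u):=(\ell(u)f)(e)$ for $u\in\Ug$. We check that $\phi_f$ is $\Zg$-linear with character $\chi_\Lambda$, using condition (1) in the definition of $\WhL$. We check that it is $\Un$-linear with character $\eta$, since the left action of $\lie{n}$ at the identity agrees, after inversion, with the right derivative, which is governed by $f(gn)=\eta(n)^{-1}f(g)$. We also verify that $f\mapsto\phi_f$ intertwines $\ell(X)$ with the right-multiplication action on $P$.

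Because $f$ is $K$-finite, $\phi_f$ is $\lie{k}$-finite and integrates to $K$ (on $K_0$ directly, and then for all of $K$ using $K$-finiteness of $f$ together with the $G$-action). So the image lies in $\Gamma P$. Injectivity is real-analyticity: $f$ is a $\Zg$-eigenfunction and $K$-finite, hence satisfies an elliptic system and is analytic, so all derivatives at $e$ vanishing forces $f=0$.

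Surjectivity is the main step. Given $\phi\in\Gamma P$, each $K$-type gives a finite-dimensional $\brgK$-data, and we define $f(k\exp(X)\cdot)$ by a convergent Taylor series. The cleaner route is to use $G=KAN$: define $f(kan)$ from $\phi$ via the $K$-action and the $\lie{a}$-derivatives, determined by the $\Un$-equivariance. The $\Zg$-condition then gives an ODE system in $A$ (the Whittaker radial equations) whose formal solutions converge. I expect this convergence/globalization, namely that every algebraic $K$-finite vector in $P$ comes from an honest smooth function on all of $G$, to be the hard point.

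Alternatively, I would avoid analysis entirely by comparing sizes. By the Goodman--Wallach/Matumoto result quoted in Section~\ref{section:injectivity}, each $v^\ast\in\Wh_{-\eta}(V^\ast)$ yields $\Psi_{v^\ast}\colon V\to\WhL$. Hence $\Hom_{\gK}(V,\WhL)\to\Hom_{\gK}(V,\Gamma P)$ is an isomorphism for every $V\in\mathcal{H}_{G}[\Lambda]^{(1)}$: both sides equal $\Wh_{-\eta}(V^\ast)$, and the map is the injection above. Since $\Gamma P$ is a union of its finitely generated submodules, which lie in $\mathcal{H}_G[\Lambda]^{(1)}$, applying this to those submodules gives surjectivity.

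For part (2), we use the standard fact that $(\Gamma_{\lie{g},\{e\}}^{\gK})^i$ can be computed by relative Lie algebra cohomology. Concretely, $(\Gamma_{\lie{g},\{e\}}^{\gK})^{i}(P)$ is built from $\Ext^i_{\lie{k}}$ against $K$-finite modules, that is, $H^i(\lie{k},\,\cdot\,)$-type groups applied to $\Hom(U(\lie{k}),\cdot)$-type objects. Since $\lie{g}=\lie{k}\oplus\lie{a}\oplus\lie{n}$, PBW gives that $\Ug$ is free over $\Uk$, and moreover $\Ug\simeq\Uk\otimes(\Zg\otimes\Un)$ as $(\Uk,\Zg\otimes\Un)$-bimodules. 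This is where the split hypothesis enters, via Kostant's freeness of $U(\lie{a}+\lie{n})$-type complements over $\Zg$: in the split case $U(\lie{a})$ is free over the Harish-Chandra image of $\Zg$ of rank $|W|$.

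Hence, as a $\lie{k}$-module, $P\simeq\Hom_{\C}(\Uk,\C)^{\oplus|W|}$, which is coinduced from the trivial subalgebra. Coinduced modules are acyclic for $\Gamma_{\lie{k},\{e\}}^{\kK}$. Since the derived Zuckerman functors depend only on the $\lie{k}$-module structure, the higher derived functors vanish. The main obstacle in (2) is justifying that bimodule freeness, which I would take from Kostant's theorem on $\Ug$ over $\Zg$, adapted to the decomposition $\lie{k}+\lie{a}+\lie{n}$.
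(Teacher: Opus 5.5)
Your part (2) is the paper's argument: restrict to $\lie{k}$, where $P$ becomes coinduced from $\C^{|W|}$ by Proposition~\ref{proposition:Osborne?}. The decomposition you need there is $\Ug\simeq\Un\otimes\Zg\otimes E\otimes\Uk$ with $\dim E=|W|$. Your displayed $\Ug\simeq\Uk\otimes(\Zg\otimes\Un)$ drops $E$, and so contradicts the rank-$|W|$ claim you use next.

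In part (1), the map $f\mapsto\phi_f$ and the analyticity argument are the paper's. Your second surjectivity argument, however, is a genuinely different route.

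The paper compares $K$-types. It gets $\WhL|_{K}\simeq C(K)^{\oplus|W|}$ from Matumoto's filtration (Theorem~\ref{theorem:Matumoto, comp series}) and $\Hom_{K}(\delta,\Gamma_{\lie{g},\{e\}}^{\gK}P)\simeq\Hom_{\C}(\delta,E)$. An injection between modules with equal finite $K$-multiplicities is then onto.

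You use instead the universal property of $\Gamma_{\lie{g},\{e\}}^{\gK}P$. For $V\in\mathcal{H}_{G}[\Lambda]^{(1)}$, composing with your injection and then with $\Hom_{\gK}(V,\Gamma_{\lie{g},\{e\}}^{\gK}P)\simeq\Wh_{-\eta}(V^{\ast})$ is just $\Phi\mapsto(v\mapsto\Phi(v)(e))$. This is onto because $\Psi_{v^{\ast}}(v)(e)=v^{\ast}(v)$ (Goodman--Wallach/Matumoto). Apply this to a finitely generated $V\subset\Gamma_{\lie{g},\{e\}}^{\gK}P$, which is a Harish-Chandra module by Harish-Chandra's admissibility theorem since $\Zg$ acts by $\chi_{\Lambda}$. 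The inclusion then factors through $\WhL$, so the image exhausts $\Gamma_{\lie{g},\{e\}}^{\gK}P$. This is sound, and it does not even need the dimension count.

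What your route buys: part (1) uses neither the split hypothesis nor Theorem~\ref{theorem:Matumoto, comp series}. It also sidesteps a point in the paper. For disconnected $G$, the paper's $K$-type count leans on Theorem~\ref{theorem:Matumoto, comp series} for $G$, which Remark~\ref{remark:generalization to our groups} deduces from this very proposition. That is harmless, since the count can be done componentwise.

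What the paper's route buys: the explicit $K$-structure and admissibility of $\Gamma_{\lie{g},\{e\}}^{\gK}P$ from algebraic input. It does not invoke the analytic existence theorem at this step, though the paper relies on it anyway for Corollary~\ref{corollary:injectivity of Wh modules}.

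Two repairs are needed. First, your analytic surjectivity sketch is incomplete and should simply be dropped in favour of the second argument. Second, when $G$ is disconnected, $f\mapsto\phi_f\in P$ is not injective, since it only sees $f|_{G_{0}}$. Also, ``integrates to $K$'' has no meaning for a single vector of $P$. The map into $\Gamma_{\lie{g},\{e\}}^{\gK}P$ must be the adjoint of $f\mapsto\phi_f$. Its kernel consists of those $f$ with $(\ell(u)\ell(k)f)(e)=0$ for all $u\in\Ug$ and $k\in K$. Analyticity, together with the fact that $K$ meets every component of $G$, then gives $f=0$, as in the paper.
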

\begin{proof}
The proof of this proposition is almost the same as that of 
\cite[Proposition~6.3.5]{Vogan Green} or 
\cite[Propostion~11.47]{KV}. 
For completeness, we write it here. 

(1) First, we show that, for each $K$-type $\delta \in \widehat{K}$, 
the multiplicities of $\delta$ in both sides of 
\eqref{equation:realization of WhL by coh-ind, 1} are the same. 
By Theorem~\ref{theorem:Matumoto, comp series}, 
$\res_{\gK}^{\lie{k},K} \WhL 
\simeq C(K)^{\oplus |W|}$, where $C(K) = C^{\infty}(K)_{K}$ is the
space of $K$-finite smooth functions on $K$.  
Therefore, the multiplicity of $\delta$ in $\WhL$ is 
$|W| \dim \delta$. 

In order to count the multiplicity in the right hand side of 
\eqref{equation:realization of WhL by coh-ind, 1}, 
we use the following well known fact: 
\begin{proposition}[cf. {\cite[\S~3.7]{Wallach real reductive I}}]
\label{proposition:Osborne?}
Suppose $G$ is a split group. 
There is a $|W|$-dimensional
linear subspace $E$ of $U(\lie{g})$ such that 
\[\Ug \simeq U(\lie{k}) \otimes E \otimes \Zg \otimes \Un
\simeq \Un \otimes \Zg \otimes E \otimes U(\lie{k}). 
\]
\end{proposition}

By this proposition, 
\begin{align*}
\Hom_{K}&(\delta, 
\Gamma_{\lie{g}, \{e\}}^{\gK} 
(\Hom_{\Zg \otimes \Un}(\Ug, \C_{\Lambda, \eta})))
\\
& \simeq 
\Hom_{\lie{k}}(\res_{\lie{k},K}^{\lie{k},\{e\}} \delta, 
\Hom_{\Zg \otimes \Un}(\Ug, \C_{\Lambda, \eta}))
\\
& \simeq 
\Hom_{\lie{k}}(\res_{\lie{k},K}^{\lie{k},\{e\}} \delta, 
\Hom_{\Zg \otimes \Un}(\Un \Zg E U(\lie{k}), \C_{\Lambda, \eta}))
\\
& \simeq 
\Hom_{\C}(\delta, E). 
\end{align*}
Therefore, the multiplicity of $\delta$ in the right
hand side of \eqref{equation:realization of WhL by coh-ind, 1} is 
$\dim E \times \dim \delta 
= 
|W| \dim \delta$, which is the same as the multiplicity of $\delta$ in
$\WhL$. 

Second, we show that there is an injective $\brgK$-homomorphism from 
$\WhL$ to 
$\Gamma_{\lie{g}, \{e\}}^{\gK} 
(\Hom_{\Zg \otimes \Un}(\Ug, \C_{\Lambda, \eta}))$. 
As in \cite{KV}, we denote by $R(\gK)$ the Hecke algebra of the pair
$\brgK$. 
There are natural isomorphisms 
(the isomorphism \eqref{equation:pf of 5.1-1} is due to
\cite[Theorem~2.69]{KV})
\begin{align}
& 
\Hom_{\gK}(\WhL, 
\Gamma_{\lie{g}, \{e\}}^{\gK} 
(\Hom_{\Zg \otimes \Un}(\Ug, \C_{\Lambda, \eta})))
\notag
\\
& \simeq 
\Hom_{R(\gK)}(\WhL, 
\Hom_{R(\lie{g}, \{e\})}(R(\gK), 
\Hom_{\Zg \otimes \Un}(\Ug, \C_{\Lambda, \eta}))_{K})
\label{equation:pf of 5.1-1}
\\
& \simeq 
\Hom_{R(\lie{g}, \{e\})}(\WhL, 
\Hom_{\Zg \otimes \Un}(\Ug, \C_{\Lambda, \eta}))
\notag
\\
& \simeq 
\Hom_{\Ug}(\WhL, 
\Hom_{\Zg \otimes \Un}(\Ug, \C_{\Lambda, \eta})). 
\label{equation:pf of 5.1-2}
\end{align}
Define an element $\phi_{2}$ in \eqref{equation:pf of 5.1-2} by 
\begin{align*}
\phi_{2} : &\WhL \rightarrow 
\Hom_{\Zg \otimes \Un}(\Ug, \C_{\Lambda, \eta}), 
\\ 
&\phi_{2}(f)(u) = (\ell(u)f)(e) \quad (f \in \WhL, u \in \Ug). 
\end{align*}
$\phi_{2}$ is well defined. 
In fact, for any $u_{1} \in \Un$
$z \in \Zg$ and $u \in \Ug$, 
\begin{align*}
\phi_{2}(f)(u_{1} z u) 
&= 
(\ell(u_{1}) \ell(z) \ell(u) f)(e) 
= (r({}^{t}u_{1}) \ell(z) \ell(u)f)(e) 
\\
&= \eta(u_{1}) \chi_{\Lambda}(z) (\ell(u)f)(e) 
= \eta(u_{1}) \chi_{\Lambda}(z) \phi_{2}(f)(u),
\end{align*}
where $r$ is the right translation, 
and for any $u, u' \in \Ug$ 
\[
\phi_{2}(\ell(u')f)(u) = (\ell(u) \ell(u')f)(e) 
= (\ell(uu')f)(e) = \phi_{2}(f)(uu'). 
\]
Let $\phi_{1}$ be 
the element in \eqref{equation:pf of 5.1-1} 
which corresponds to $\phi_{2}$ by 
\[
\phi_{1}(f)(a)(u) = \phi_{2}(\ell(a)f)(u) = (\ell(u) \ell(a) f)(e), 
\quad a \in R(\gK). 
\]
Suppose $\phi_{1}(f) = 0$ holds for $f \in \WhL$. 
Then 
$\phi_{1}(f)(a)(u) = \phi_{2}(\ell(a)f)(u) 
= (\ell(u) \ell(a)f)(e) = 0$ 
for any $a \in R(\gK)$ 
and $u \in \Ug$, 
especially 
\begin{equation}\label{equation:condition for phi2(f)=0} 
(\ell(u) \ell(k)f)(e)=0 
\quad 
\mbox{for any }  k \in K \mbox{ and } u \in \Ug.
\end{equation}
$\ell(k)f$ is a real analytic function on $G$ since it is $K$-finite. 
Therefore $\ell(k)f = 0$ 
by \eqref{equation:condition for phi2(f)=0}. 
Since $K$ meets every connected component of $G$, 
$f=0$. 
It follows that $\phi_{1}$ is injective. 
This completes the proof of 
\eqref{equation:realization of WhL by coh-ind, 1}. 

The proof of 
Proposition~\ref{proposition:realization of WhL by coh-ind}(2) 
is almost the same as that of Proposition~6.3.5 of 
\cite{Vogan Green}. 
Let $\mathcal{F}$ denote the forgetful functor. 
Since 
$\mathcal{F}_{\gK}^{\kK} \circ \Gamma_{\lie{g},\{e\}}^{\gK} 
\simeq 
\Gamma_{\lie{k}, \{e\}}^{\kK} 
\circ \mathcal{F}_{\lie{g},\{e\}}^{\lie{k},\{e\}}$ 
(\cite[Proposition~2.69]{KV}),
\begin{align*}
\res_{\gK}^{\kK} 
&
(\Gamma_{\lie{g},\{e\}}^{\gK})^{i}
(\Hom_{\Zg \otimes \Un}(\Ug, \C_{\Lambda,\eta}))
\\
& \simeq 
(\Gamma_{\lie{k},\{e\}}^{\kK})^{i}
\mathcal{F}_{\lie{g},\{e\}}^{\lie{k},\{e\}} 
(\Hom_{\Zg \otimes \Un}(\Ug, \C_{\Lambda,\eta}))
\\
& \simeq 
(\Gamma_{\lie{k},\{e\}}^{\kK})^{i}
(\Hom_{\C}(E \otimes_{\C} \Uk, \C)) 
\qquad 
\mbox{(by Proposition~\ref{proposition:Osborne?})}
\\
& \simeq 
(\Gamma_{\lie{k},\{e\}}^{\kK})^{i}
(\Hom_{\C}(\Uk, \C^{\dim E})) 
\\
& = 
(\Gamma_{\lie{k},\{e\}}^{\kK})^{i}
(\pro_{0,\{e\}}^{\lie{k},\{e\}}(\C^{\dim E})).
\end{align*}
But since $\pro_{0,\{e\}}^{\lie{k},\{e\}}(\C^{\dim E})$ 
is an injective $(\lie{k},\{e\})$-module 
(\cite[Corollary~6.1.24]{Vogan Green}), 
this module is zero if $i>0$. 
This proves (2). 
\end{proof}

We will realize the contragredient module of $\WhL$ algebraically. 
Define 
\[
\indWhL 
:= 
\Ug \otimes_{\Zg \otimes \Un} \C_{\Lambda,\eta}. 
\]
The contragredient module ${\indWhL}^{c}$ of it is 
\begin{align*}
{\indWhL}^{c}
&= 
\Hom_{\C}(\indWhL, \C) 
\\
&\simeq
\Hom_{\C}(\C_{\Lambda,\eta} \otimes_{\Zg \otimes \Un} \Ug, \C) 
\\
& \simeq 
\Hom_{\Zg \otimes \Un}(\Ug, \Hom_{\C}(\C_{\Lambda,\eta}, \C)) 
\\
& \simeq 
\Hom_{\Zg \otimes \Un}(\Ug, \C_{-\Lambda,-\eta}).
\end{align*}

Recall that the Bernstein functor $\Pi_{\lie{g}, \{e\}}^{\gK}$ 
is defined by 
\[
\Pi_{\lie{g}, \{e\}}^{\gK} V 
= R(\gK) \otimes_{R(\lie{g}, \{e\})} V 
\]
for a $\Ug$-module $V$ (cf \cite{KV}). 
By the above discussion, 
we obtain the following proposition from 
Proposition~\ref{proposition:realization of WhL by coh-ind} 
and \cite[Theorem~3.1]{KV}. 
\begin{proposition}
\label{proposition:realization of dual Whittaker gK module}
Suppose $G$ is a split group. 
\begin{enumerate}
\item
The contragredient $\brgK$-module of $\WhL$ is isomorphic 
to 
$\duWh{-\Lambda}{-\eta}$. 
\item
If $i>0$, then 
$(\Pi_{\lie{g},\{e\}}^{\gK})_{i}(\indWh{-\Lambda}{-\eta}) = 0$. 
\end{enumerate}
\end{proposition}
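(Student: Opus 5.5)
The plan is to apply Zuckerman--Bernstein duality \cite[Theorem~3.1]{KV} to the realization in Proposition~\ref{proposition:realization of WhL by coh-ind}. For part (1), I start from
\[
\WhL \simeq \Gamma_{\lie{g},\{e\}}^{\gK}\bigl(\Hom_{\Zg\otimes\Un}(\Ug,\C_{\Lambda,\eta})\bigr).
\]
The computation just before the statement gives
\[
\Hom_{\Zg\otimes\Un}(\Ug,\C_{\Lambda,\eta}) \simeq (\indWh{-\Lambda}{-\eta})^{c} = \Hom_{\C}(\indWh{-\Lambda}{-\eta},\C),
\]
the full algebraic dual. The only care needed is with signs: the transpose antiautomorphism sends $\chi_{-\Lambda}$ to $\chi_{\Lambda}$ on $\Zg$ and $-\eta$ to $\eta$ on $\Un$. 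Here $-\Lambda$ means an element whose infinitesimal character is $\chi_{-\Lambda}(z)=\chi_{\Lambda}({}^{t}z)$.

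The duality theorem \cite[Theorem~3.1]{KV} says that for a $(\lie{g},\{e\})$-module $V$ and every $j$,
\[
(\Gamma_{\lie{g},\{e\}}^{\gK})^{j}(\Hom_{\C}(V,\C))_{K} \simeq \bigl((\Pi_{\lie{g},\{e\}}^{\gK})_{j}V\bigr)^{c}.
\]
With $j=0$ and $V=\indWh{-\Lambda}{-\eta}$ this gives $\WhL\simeq(\duWh{-\Lambda}{-\eta})^{c}$. Note that $\Gamma$ already produces $K$-finite vectors, so the subscript $K$ is harmless.

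To turn this into (1) I still have to take contragredients once more. For that I need $\duWh{-\Lambda}{-\eta}$ to be $K$-admissible, because for admissible $X$ we have $(X^{c})^{c}\simeq X$. I argue admissibility in two steps.
\begin{itemize}
\item Proposition~\ref{proposition:Osborne?} shows that $\indWh{-\Lambda}{-\eta}$ restricted to $\lie{k}$ is $\Uk\otimes E$, a free $\Uk$-module of rank $|W|$.
\item Hence $\Pi_{\lie{g},\{e\}}^{\gK}(\indWh{-\Lambda}{-\eta})$ restricted to $K$ is $\Pi_{\lie{k},\{e\}}^{\kK}(\Uk)^{\oplus|W|}\simeq C(K)^{\oplus|W|}$, which is admissible. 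This uses that $\Pi$ commutes with the forgetful functor to $(\lie{k},\cdot)$, the dual of the statement used for $\Gamma$.
\end{itemize}
Taking contragredients of $\WhL\simeq(\duWh{-\Lambda}{-\eta})^{c}$ then yields $(\WhL)^{c}\simeq\duWh{-\Lambda}{-\eta}$, with the sign conventions fixed as above.

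For part (2) I use the same duality with $j=i>0$. It gives $\bigl((\Pi_{\lie{g},\{e\}}^{\gK})_{i}\indWh{-\Lambda}{-\eta}\bigr)^{c}\simeq(\Gamma_{\lie{g},\{e\}}^{\gK})^{i}(\ldots)$, and the right-hand side vanishes by Proposition~\ref{proposition:realization of WhL by coh-ind}(2). A $(\gK)$-module with zero contragredient has no nonzero $K$-isotypic components, so it is zero. That proves (2).

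The main obstacle is matching hypotheses and conventions. First, \cite[Theorem~3.1]{KV} is stated for the full dual $\Hom_{\C}(V,\C)$ rather than a restricted one; here this matches exactly, since the computation before the statement uses the full $\Hom_{\C}$. Second, the signs of $\Lambda$ and $\eta$ and the transpose convention must be tracked carefully. Third, the admissibility argument above must be checked to justify the double contragredient.
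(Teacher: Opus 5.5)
Your proposal is correct and follows the paper's route: identify $\Hom_{\Zg\otimes\Un}(\Ug,\C_{\Lambda,\eta})$ with the full dual of $\indWh{-\Lambda}{-\eta}$, apply the duality \cite[Theorem~3.1]{KV} to the realization in Proposition~\ref{proposition:realization of WhL by coh-ind}, and use the vanishing of $(\Gamma_{\lie{g},\{e\}}^{\gK})^{i}$ for part (2). Your explicit admissibility check for the double contragredient fills in a step the paper leaves implicit; it also follows at once from the admissibility of $\WhL$, because a $\brgK$-module whose contragredient is admissible is itself admissible.
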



\section{Behavior of $\duWhL$ under the translation functors}
\label{section:translation of duWhL}

In this section, we investigate the behavior of $\duWhL$ under
the translation functor $\psi_{\Lambda}^{\Lambda'}$. 
We maintain the assumption that $G$ is a split group.

For two elements $\Lambda, \Lambda' \in \lie{a}^{\ast}$, 
suppose that the difference $\Lambda' - \Lambda$ is contained in 
$\mathcal{L}$. 
Since $\indWhL$ has the infinitesimal character $\Lambda$, 
$\duWhL$ also does so by \cite[Theorem~5.21(b)]{KV}. 
Then by the proof of Theorem~7.237 of \cite{KV}, we have 
\begin{equation}\label{equation:psi and Pi}
\psi_{\Lambda}^{\Lambda'}(\duWhL) 
\simeq 
\Pi_{\lie{g},\{e\}}^{\gK}(\psi_{\Lambda}^{\Lambda'}(\indWhL)). 
\end{equation}
\begin{lemma}\label{lemma:translation of duWhL to wall}
Let $G$ be a split group.  
Suppose $\Lambda, \Lambda' \in \overline{C^{+}}$ satisfy 
$\Lambda' - \Lambda \in \mathcal{L}$. 
If $\Lambda'$ is at least as singular as $\Lambda$, 
then 
\[
\psi_{\Lambda}^{\Lambda'}(\duWhL) 
\simeq 
\duWh{\Lambda'}{\eta}. 
\]
\end{lemma}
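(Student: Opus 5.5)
By \eqref{equation:psi and Pi}, it suffices to show that $\psi_{\Lambda}^{\Lambda'}(\indWhL)\simeq \indWh{\Lambda'}{\eta}$ as $\Ug$-modules. Applying $\Pi_{\lie{g},\{e\}}^{\gK}$ to this isomorphism then gives the lemma.

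A possible first route works directly with $\indWhL$. We have $\indWhL\simeq \Ug\otimes_{\Zg}\C_{\Lambda}\otimes_{\Un}\C_{\eta}$, which is the Kostant standard Whittaker module $Y_{\Lambda,\eta}=U(\lie{g})/\ker\chi_{\Lambda}U(\lie{g})+\dots$. Kostant showed that it is irreducible for non-degenerate $\eta$ and that it is characterized as the unique $\Ug$-module with infinitesimal character $\chi_\Lambda$ generated by a Whittaker vector.

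Rather than computing $\psi_{\Lambda}^{\Lambda'}(\indWhL)=P_{\Lambda'}(F\otimes \indWhL)$ by hand, I would argue through the contragredient side, which the paper has already handled. By Proposition~\ref{proposition:realization of dual Whittaker gK module}(1), $\duWhL\simeq (\stWh{-\eta}{-\Lambda}{\circ})^{c}$. Translation functors commute with taking $K$-finite contragredients, in the sense that $(\psi_{\Lambda}^{\Lambda'}V)^{c}\simeq \psi_{-\Lambda}^{-\Lambda'}(V^{c})$, once we note that the dual of $F$ has extremal weight $-(\Lambda'-\Lambda)$. Hence
\[
\psi_{\Lambda}^{\Lambda'}(\duWhL)\simeq \bigl(\psi_{-\Lambda}^{-\Lambda'}\stWh{-\eta}{-\Lambda}{\circ}\bigr)^{c}.
\]

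Now apply Theorem~\ref{theorem:behavior under the translation functors}(2). A split group is quasi-split, so it applies once its hypotheses hold. By Remark~\ref{remark:may assume dominant}, $\stWh{-\eta}{-\Lambda}{\circ}=\stWh{-\eta}{-w_{0}\Lambda}{\circ}$, and both $-w_{0}\Lambda$ and $-w_{0}\Lambda'$ lie in $\overline{C^{+}}$. The translation functor depends only on the $W$-orbits together with the extremal weight, and applying $-w_0$ preserves the relation ``at least as singular as''. The theorem therefore gives $\psi_{-\Lambda}^{-\Lambda'}\stWh{-\eta}{-\Lambda}{\circ}\simeq \stWh{-\eta}{-\Lambda'}{\circ}$. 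Taking contragredients and using Proposition~\ref{proposition:realization of dual Whittaker gK module}(1) once more yields $\duWh{\Lambda'}{\eta}$.

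The step I expect to be the main obstacle is the compatibility of $\psi$ with contragredients, including the $-w_0$ bookkeeping. The concern is that $\psi_{-\Lambda}^{-\Lambda'}$ should be the translation functor built from $F^{\ast}$ and be adjoint/dual to $\psi_{\Lambda}^{\Lambda'}$. I would verify this using $(F\otimes V)^{c}\simeq F^{\ast}\otimes V^{c}$, together with the fact that the generalized infinitesimal character projection $P_{\Lambda'}$ is dual to $P_{-\Lambda'}$. Here one needs that $V^c$ of a module with infinitesimal character $\chi_\Lambda$ has infinitesimal character $\chi_{-\Lambda}$, which holds via the principal anti-automorphism.

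A second point to check is that $\duWhL$ is admissible, which is needed for taking $(\cdot)^{c}$ twice to return the module. This follows because it is the contragredient of the admissible module $\stWh{-\eta}{-\Lambda}{\circ}$.

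If this route failed, the fallback would be the direct $\Ug$-level computation. Filter $F\otimes\indWhL$ by $\Un$-weight filtration of $F$: the subquotients are quotients of $\Ug\otimes_{\Un}\C_\eta$ on which $\Zg$ acts through $\chi_{\Lambda+\mu}$, $\mu$ a weight of $F$. One would then use Kostant's result that $\Ug\otimes_{\Un}\C_\eta\simeq \Zg\otimes_{\C}(\cdots)$ is free over $\Zg$. Projecting to $\chi_{\Lambda'}$, only $\mu=\Lambda'-\Lambda$ (up to $W$) survives, and that weight has multiplicity one precisely because $\Lambda'$ is at least as singular as $\Lambda$.
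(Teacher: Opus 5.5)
Your argument is correct, but it takes a different route from the paper's. The paper works at the level of $\Ug$-modules, following exactly the reduction in your first paragraph:
\begin{itemize}
\item $\indWhL$ is irreducible by \cite[Theorem~3.6.1]{Kos};
\item $\psi_{\Lambda}^{\Lambda'}(\indWhL)\neq 0$ by \cite[Theorem~4.6]{Kos};
\item $\psi_{\Lambda}^{\Lambda'}(\indWhL)$ is irreducible by \cite[Theorem~7.229]{KV}, since $\Lambda'$ is at least as singular as $\Lambda$;
\item Kostant's uniqueness theorem then gives $\psi_{\Lambda}^{\Lambda'}(\indWhL)\simeq\indWh{\Lambda'}{\eta}$, and \eqref{equation:psi and Pi} finishes.
\end{itemize}

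Your main argument bypasses this. You write $\duWhL\simeq(\stWh{-\eta}{-\Lambda}{\circ})^{c}$ by Proposition~\ref{proposition:realization of dual Whittaker gK module}(1). You then move $\psi$ across the $K$-finite dual, using $(F\otimes V)^{c}\simeq F^{\ast}\otimes V^{c}$ and $\chi_{\Lambda}({}^{t}z)=\chi_{-\Lambda}(z)$. You also use $\psi_{-\Lambda}^{-\Lambda'}=\psi_{-w_{0}\Lambda}^{-w_{0}\Lambda'}$, where $-w_{0}$ preserves $\overline{C^{+}}$, $\mathcal{L}$ and the relation ``at least as singular as''. Finally you quote Theorem~\ref{theorem:behavior under the translation functors}(2) for $-\eta$. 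Each step checks out. There is no circularity, since neither Theorem~\ref{theorem:behavior under the translation functors} nor Proposition~\ref{proposition:realization of dual Whittaker gK module} depends on this lemma.

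What your version buys is the observation that the lemma is precisely the contragredient of Theorem~\ref{theorem:behavior under the translation functors}(2). What it costs is much heavier input:
\begin{itemize}
\item the injectivity of $\WhL$ (exactness of the Whittaker functor);
\item the Kostant--Matumoto description of $\soc\WhL$;
\item Vogan's invariance of the Bernstein degree under translation \cite{Vogan GK-dim};
\item Matumoto's filtration behind the $K$-type count in Proposition~\ref{proposition:realization of dual Whittaker gK module}.
\end{itemize}
The paper's proof is purely algebraic and needs none of this, not even the identification of $\duWhL$ as a contragredient. You had already named the Kostant ingredients for the direct route in your second paragraph. Your fallback sketch is not needed.
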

\begin{proof}
By Theorem~3.6.1 of \cite{Kos}, 
$\indWhL$ is irreducible. 
$\psi_{\Lambda}^{\Lambda'}(\indWhL)$ is non-zero by Theorem~4.6 there. 
Since $\Lambda'$ is at least as singular as $\Lambda$, 
$\psi_{\Lambda}^{\Lambda'}(\indWhL)$ is irreducible by Theorem~7.229
of \cite{KV}. 
Therefore, $\psi_{\Lambda}^{\Lambda'}(\indWhL)$ is isomorphic to 
$\indWh{\Lambda'}{\eta}$ again by Theorem~3.6.1 of \cite{Kos}. 
The lemma follows from \eqref{equation:psi and Pi}. 
\end{proof}

In order to analyze the translation to converse direction, i.e. 
singular to non-singular, we need some preparation. 

\begin{lemma}\label{lemma:multi of Langlands quotients of PS}
Let $G$ be a split group. 
Suppose $\Lambda \in C^{+}$. 
For $\sigma \in \widehat{M}$, let $V$ be the Langlands quotient of 
the principal series module 
$\Ind_{MAN}^{G}
(\sigma \otimes e^{\Lambda + \rho} \otimes 1_{N})_{K}$ 
induced from the minimal parabolic subgroup $MAN$. 
Then there is a unique irreducible large module 
$X \in \mathcal{H}_{G}[\Lambda]$ which satisfies 
\begin{enumerate}
\item
its injective envelope $E(X)$ is a direct summand of $\WhLmg$, 
and 
\item
$V$ is a composition factor of $E(X)$. 
\end{enumerate}
In this case, the multiplicity of $V$ in $E(X)$ is one. 
\end{lemma}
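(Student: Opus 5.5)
The plan is to count multiplicities using the global character formula for $\WhLmg$ and then use the socle decomposition to isolate a single indecomposable summand.

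By Theorem~\ref{theorem:character of WhLmg}, $\ch \WhLmg = \ch(\Ind_{AN}^{G}(e^{\Lambda+\rho}\otimes 1_{N})_{K})$. Decomposing $\Ind_{AN}^{G}(e^{\Lambda+\rho}\otimes 1_N)_K \simeq \bigoplus_{\sigma\in\widehat{M}} \Ind_{MAN}^{G}(\sigma\otimes e^{\Lambda+\rho}\otimes 1_N)_K$, we need the multiplicity of $V$ in the right hand side.

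\textbf{Multiplicity of $V$ in the principal series.} Since $\Lambda\in C^{+}$ is dominant regular, $V$ is the Langlands quotient attached to the data $(\sigma,\Lambda)$, and it occurs exactly once in $\Ind_{MAN}^{G}(\sigma\otimes e^{\Lambda+\rho}\otimes 1_N)_K$. This is the standard multiplicity-one property of the Langlands quotient in its standard module.

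For $\sigma'\neq\sigma$, I would argue that $V$ does not occur in $\Ind_{MAN}^{G}(\sigma'\otimes e^{\Lambda+\rho}\otimes 1_N)_K$, by the Langlands classification. In the Grothendieck group, the composition factors of a standard module other than its Langlands quotient are Langlands quotients with strictly smaller Langlands parameter, in the sense of the norm of the real part of the continuous parameter. Every such standard module has continuous parameter of norm $|\Lambda|$ with $\Lambda$ regular dominant, and only the Langlands quotient attains this maximal norm. The Langlands parameter of $V$ has the maximal norm, so $V$ can only be the Langlands quotient of the module with data $(\sigma',\Lambda)$. By uniqueness of Langlands data this forces $\sigma'=\sigma$.

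I expect this comparison of Langlands data to be the main obstacle. The principal series with the same $\Lambda$ but different $\sigma'$ could a priori share composition factors, and a careful ordering argument is needed to exclude $V$ as a non-quotient constituent. It is essentially Vogan's ordering, and it requires $\Lambda$ regular so that $(\sigma',\Lambda)$ is genuinely in the closed positive chamber. Altogether, $V$ occurs in $\WhLmg$ with multiplicity exactly one.

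\textbf{Locating $V$ in a unique summand.} By Corollary~\ref{corollary:WhL as injective module}(3), $\WhLmg\simeq\bigoplus_X E(X)$, where $X$ runs over the irreducible modules with $\eta\in\WF(X)$, each appearing once. These $X$ are large, since they embed in $\WhL$ and by Theorem~\ref{theorem:irred sub of WhL} have Gelfand--Kirillov dimension $\dim\lie n$. Because the multiplicity of $V$ in the whole sum is one, $V$ is a composition factor of exactly one summand $E(X)$, and there with multiplicity one. This gives both existence and uniqueness of $X$, provided we also check that an irreducible large $X$ whose $E(X)$ is a direct summand of $\WhLmg$ must be one of the socle modules. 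That follows from the Krull--Schmidt uniqueness in Proposition~\ref{proposition:decomp of injective}: an indecomposable summand $E(X)$ has socle $X$, and that socle must appear in $\soc\WhLmg$.
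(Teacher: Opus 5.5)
Your proposal is correct and follows the paper's proof. Both arguments use $\ch \WhLmg = \ch(\Ind_{AN}^{G}(e^{\Lambda+\rho}\otimes 1_{N})_{K})$, the decomposition into minimal principal series, and the Langlands classification to show that $V$ occurs exactly once in $\WhLmg$, and then conclude from the multiplicity-free decomposition of Corollary~\ref{corollary:WhL as injective module}(3) into injective envelopes of large modules. You spell out the Langlands-ordering step (strict decrease of the norm of the real part of the continuous parameter, and $\sigma'=\sigma$ because $\Real\Lambda$ is regular), which the paper only cites.
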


\begin{proof}
We use the decomposition 
\begin{equation}
\label{equation:isom between Ind}
\Ind_{AN}^{G}
(e^{\Lambda+\rho} \otimes 1_{N})_{K}
\simeq 
\bigoplus_{\sigma \in \widehat{M}}
\Ind_{MAN}^{G}
(\sigma \otimes e^{\Lambda+\rho}
\otimes 1_{N})_{K}.
\end{equation}
By this, Theorem~\ref{theorem:character of WhLmg} 
and the Langlands classification, 
the multiplicity of $V$ in $\WhLmg$ is one. 
We know from Corollary~\ref{corollary:WhL as injective module}(3) 
that $\WhLmg$ is a direct sum of
injective envelopes of irreducible large modules, each of which
appears once. 
Therefore, there is just one large module $X$ which satisfies 
the conditions (1) and (2). 
\end{proof}
\begin{corollary}
\label{corollary:multi of E(Xj) in WhL}
Under the assumption of 
Lemma~\ref{lemma:multi of Langlands quotients of PS}, 
let $\{X_{1}, X_{2}, \dots, X_{l}\}$ be the set of large irreducible
modules in $\mathcal{H}_{G}[\Lambda]$ such that 
each injective envelope $E(X_{j})$ ($j=1,2,\dots,l$) contains $V$ as a
composition factor. 
Then 
\begin{equation}
\label{eq:multi of V in WhL}
\sum_{j=1}^{l} c_{\dim \lie{n}}(X_{j}) = |W|. 
\end{equation}
Here, $c_{\dim \lie{n}}(X_{j})$ is the Bernstein degree of $X_{j}$, 
which is equal to the multiplicity of $E(X_{j})$ in 
the decomposition 
Corollary~\ref{corollary:WhL as injective module}(1) 
of $\WhL$. 
\end{corollary}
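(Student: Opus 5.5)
The approach is to count the multiplicity of $V$ as a composition factor of $\WhL$ in two ways and compare.

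\emph{First count.} By Corollary~\ref{corollary:character of WhL}, $\ch \WhL = |W|\,\ch(\Ind_{AN}^{G}(e^{\Lambda+\rho}\otimes 1_{N})_{K})$. Decompose the induced module as in \eqref{equation:isom between Ind}. Since $\Lambda\in C^{+}$, the Langlands classification shows that $V$ occurs exactly once in $\Ind_{AN}^{G}(e^{\Lambda+\rho}\otimes 1_{N})_{K}$, namely as the Langlands quotient of the $\sigma$-summand. This is the same fact used in the proof of Lemma~\ref{lemma:multi of Langlands quotients of PS}. Hence the multiplicity of $V$ in $\WhL$ is exactly $|W|$.

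\emph{Second count.} By Corollary~\ref{corollary:WhL as injective module}(1),
\[
\WhL \simeq \bigoplus_{X} E(X)^{\oplus c_{\dim\lie{n}}(X)},
\]
where $X$ runs over the irreducible large modules in $\mathcal{H}_{G}[\Lambda]$. Since $G$ is split, hence quasi-split, ``large'' coincides with $\Dim(X)=\dim\lie{n}$. Therefore the multiplicity of $V$ in $\WhL$ equals
\[
\sum_{X} c_{\dim\lie{n}}(X)\,[E(X):V].
\]
Only the modules $X_{1},\dots,X_{l}$ of the statement contribute to this sum.

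The key remaining point, and the main obstacle, is to show that $[E(X_{j}):V]=1$ for every $j$. For this I would invoke Lemma~\ref{lemma:multi of Langlands quotients of PS}, which requires each $E(X_{j})$ to be a direct summand of $\WhLmg$. By Corollary~\ref{corollary:WhL as injective module}(3), $\WhLmg$ is the direct sum of the $E(X)$ over those irreducible $X$ with $\eta\in\WF(X)$. I therefore need every large $X$ to satisfy $\eta\in\WF(X)$.

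I would obtain this from the equality of socles in Step I of the proof of Theorem~\ref{theorem:character of WhLmg}, \eqref{equation:socles, in case G=Gmax}, which shows that the socles of the two modules have the same constituents for $G=G_{\max}$. To pass to a general split $G$, I would use the restriction \eqref{equation:decomposition of WhL for Gmax to G}, combined with the fact that non-degenerate characters are conjugate under $M^{+}$. Alternatively, it is enough to note that for a split group the unique principal nilpotent complex orbit meets $\WF(X)$ for large $X$. The real orbits it contains are permuted by $M^{+}$, and the multiplicity-one argument in Lemma~\ref{lemma:multi of Langlands quotients of PS} still applies to each $\WhLmg$ for the conjugate characters $y_{i}\cdot\eta$.

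Granting $[E(X_{j}):V]=1$, the second count reduces to $\sum_{j=1}^{l} c_{\dim\lie{n}}(X_{j})$. Equating it with the first count gives \eqref{eq:multi of V in WhL}.
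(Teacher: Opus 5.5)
Your double count of the multiplicity of $V$ in $\WhL$ is exactly the paper's proof, and your ``alternative'' justification of $[E(X_j):V]=1$ matches what the paper does: apply Lemma~\ref{lemma:multi of Langlands quotients of PS} not to the fixed $\eta$ but to a non-degenerate character $\eta'$ (one of the $y_i\cdot\eta$) lying in $\WF(X_j)$, so that $E(X_j)$ is a summand of $\stWh{\eta'}{\Lambda}{}$ by Theorem~\ref{theorem:irred sub of WhLmg}(1). Discard your first route, though: for $G\neq G_{\max}$ it is false that every large $X$ has $\eta\in\WF(X)$ for a fixed $\eta$ (in the $Sp(2,\R)$ example of Section~\ref{section:examples}, $\overline{X}(\gamma_{1})$ does not embed in $\WhLmg$ when $\eta\in\WF(\overline{X}(\gamma_{0}))$), and the restriction \eqref{equation:decomposition of WhL for Gmax to G} only places each large $X$ in the socle of some $\stWh{y_i\cdot\eta}{\Lambda}{}$, which is all you need.
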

\begin{proof}
By Theorem~\ref{theorem:irred sub of WhLmg}, 
for each $X_{j}$, there is a non-degenerate character
$\eta'$ of $N$ such that $X_{j}$ is an irreducible submodule of 
$\stWh{\eta'}{\Lambda}{}$, 
and therefore $E(X_{j})$ is a direct summand 
of $\stWh{\eta'}{\Lambda}{}$. 
Since $E(X_{j})$ contains $V$ as a composition factor, 
the conditions of Lemma~\ref{lemma:multi of Langlands quotients of PS}
are met, so the multiplicity of $V$ in $E(X_{j})$ is one. 
This fact and the decomposition in 
Corollary~\ref{corollary:WhL as injective module}(1) imply 
that the left had side of\eqref{eq:multi of V in WhL} is the
multiplicity of $\ch(V)$ in 
\[
\ch(\WhL)
=|W| \ch(\Ind_{AN}^{G}(e^{\Lambda+\rho} \otimes 1_{N})_{K}) 
= 
|W| 
\sum_{\sigma \in \widehat{M}} 
\ch(
\Ind_{MAN}^{G}(\sigma \otimes e^{\Lambda+\rho} \otimes 1_{N})_{K}).  
\]
The first equality is due to Corollary~\ref{corollary:character of WhL}. 
Since $V$ is the Langlands quotient of a principal
series induced from the minimal parabolic subgroup, 
there is just one 
$\sigma \in \widehat{M}$ such that $V$ is a composition factor of 
$\Ind_{MAN}^{G}(\sigma \otimes e^{\Lambda+\rho} \otimes 1_{N})_{K}$. 
Therefore, the multiplicity of $\ch(V)$ in $\ch(\WhL)$ is $|W|$. 
\end{proof}

\begin{lemma}\label{lemma:genPS to Pi0-2}
Suppose $G$ is a split group.
Then for any $\sigma \in \widehat{M}$ and $\Lambda \in C^{+}$, 
\begin{equation}
\label{equation:genPS to Pi0-2}
\dim 
\Hom_{\gK}
(\Ind_{MAN}^{G}(\sigma \otimes e^{\Lambda+\rho} \otimes 1_{N})_{K},  
\duWhL) 
=
|W|. 
\end{equation}
\end{lemma}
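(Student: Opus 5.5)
Let $I(\sigma) := \Ind_{MAN}^{G}(\sigma \otimes e^{\Lambda+\rho} \otimes 1_{N})_{K}$ and $Y:=\indWhL$. The plan is to compute the Hom space by adjunction with the Bernstein functor, reduce it to a space of Whittaker vectors, and evaluate that dimension with the Jacquet integral (Theorem~\ref{theorem:holomorphy of Jacquet}) together with Matumoto's filtration.

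\textbf{Step 1 (adjunction).} Since $\Pi_{\lie{g},\{e\}}^{\gK}$ is left adjoint to the forgetful functor,
\[
\Hom_{\gK}(\Pi_{\lie{g},\{e\}}^{\gK}Y, W)\simeq \Hom_{\Ug}(Y,W).
\]
This goes the wrong way for our Hom, so I would dualize. By Proposition~\ref{proposition:realization of dual Whittaker gK module}(1), $\duWhL \simeq (\stWh{-\eta}{-\Lambda}{\circ})^{c}$. Taking contragredients therefore gives
\[
\Hom_{\gK}(I(\sigma),\duWhL)\simeq \Hom_{\gK}(\stWh{-\eta}{-\Lambda}{\circ}, I(\sigma)^{c}).
\]
Here $I(\sigma)^{c}\simeq \Ind_{MAN}^{G}(\sigma^{\ast}\otimes e^{-\Lambda+\rho}\otimes 1_{N})_{K}$. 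Equivalently, using the right adjoint description of Proposition~\ref{proposition:realization of WhL by coh-ind}, one applies adjunction to $\Gamma_{\lie{g},\{e\}}^{\gK}$ on the other side.

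A cleaner route is to apply adjunction directly to $\duWhL=\Pi(Y_{-\Lambda,-\eta})^{c}$ dual:
\[
\Hom_{\gK}(I(\sigma),\duWhL)\simeq \Hom_{\gK}(\Pi Y_{\Lambda,\eta}, I(\sigma)^{c})\simeq \Hom_{\Ug}(Y_{\Lambda,\eta}, I(\sigma)^{c}).
\]
For this I rename signs so that $\Pi Y_{\Lambda,\eta}$ is the contragredient of $\duWhL$, using $(\Pi Y)^{cc}=\Pi Y$ for admissible modules.

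\textbf{Step 2 (reduction to Whittaker vectors).} Since $Y_{\Lambda,\eta}$ is cyclic, generated by $1_{\Lambda,\eta}$ with the relations of $\Zg\otimes\Un$, a $\Ug$-map out of it is determined by a vector $v\in I(\sigma)^{c}$ satisfying $Xv=\eta(X)v$ for $X\in\lie{n}$ and $zv=\chi_{\Lambda}(z)v$. The second condition is automatic, since $I(\sigma)^{c}$ has infinitesimal character $\Lambda$ after the sign bookkeeping. Hence the dimension equals $\dim \Wh_{\eta}(I(\sigma)^{c})$.

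\textbf{Step 3 (counting).} $I(\sigma)^{c}$ is the $K$-finite dual $I(\sigma)^{\ast}_{K}$. Inside the algebraic dual, Kostant--Lynch (Theorem~\ref{theorem:exactness of Wh vector}(1)) gives $\dim\Wh_{\eta}(V^{\ast})$, which I expect to be $|W|$ times $\dim\Wh_{\eta_M}$ of $\sigma$. For split $G$, $M$ is finite, so that factor is $1$.

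The key point, and the main obstacle, is to show that the $K$-finite Whittaker vectors of $I(\sigma)^{c}$ number $|W|$, not just the algebraic dual ones. My first attempt would be to count them through the character formula. Exactness of $V\mapsto \Wh_{\eta}(V^{\ast})$ gives additivity over composition factors, and Theorem~\ref{theorem:irred sub of WhL} identifies $\dim\Wh_{-\eta}(X^{\ast})$ for irreducible large $X$ with the Bernstein degree. Combining Corollary~\ref{corollary:multi of E(Xj) in WhL} with Lemma~\ref{lemma:multi of Langlands quotients of PS} then gives the total $\sum_j c_{\dim\lie{n}}(X_j)=|W|$.

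Concretely, the alternative count runs through the decomposition of $\WhL$. Using Corollary~\ref{corollary:WhL as injective module}(1), $\Hom_{\gK}(I(\sigma),\WhL^{\vee}\text{-type})$ is computed summand by summand. For each $E(X_j)$, the map from $I(\sigma)$ is governed by the Langlands quotient $V$, which occurs once in $E(X_j)$, and these contributions are weighted by $c_{\dim\lie{n}}(X_j)$. Corollary~\ref{corollary:multi of E(Xj) in WhL} then yields the count $|W|$ claimed in the lemma.
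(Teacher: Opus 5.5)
Your Steps 1--3 compute the wrong space. The Bernstein functor controls homomorphisms \emph{out of} $\duWhL=\Pi_{\lie{g},\{e\}}^{\gK}(\indWhL)$, but the lemma is about homomorphisms \emph{into} it. The correct dualization is $\Hom_{\gK}(I(\sigma),\duWhL)\simeq\Hom_{\gK}(\stWh{-\eta}{-\Lambda}{\circ},I(\sigma)^{c})$, with the Whittaker module as the source. The identity $(\Pi Y)^{cc}=\Pi Y$ cannot move $\Pi_{\lie{g},\{e\}}^{\gK}(\indWhL)$ into the source slot, since it is $\duWhL$ itself and not its contragredient.

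Step~2 also fails on its own terms. A Harish-Chandra module has no nonzero vector on which $\lie{n}$ acts by the non-degenerate $\eta$ and $\Zg$ acts by $\chi_{\Lambda}$. Such a vector would generate a copy of the irreducible module $\indWhL$, which is free over $\Uk$ by $\Ug\simeq\Uk\otimes E\otimes\Zg\otimes\Un$ and hence not $\lie{k}$-finite. So your count gives $0$. This also shows the adjunction $\Hom_{\gK}(\Pi_{\lie{g},\{e\}}^{\gK}Y,W)\simeq\Hom_{\Ug}(Y,W)$ you invoke is false here, since it would force $\duWhL=0$. From $\Pi_{\lie{g},\{e\}}^{\gK}(Y)^{c}\simeq\Gamma_{\lie{g},\{e\}}^{\gK}(Y^{*})$ one sees that the correct target is the algebraic dual $(W^{c})^{*}$.

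What Kostant--Lynch together with Goodman--Wallach and Matumoto actually counts is $\dim\Hom_{\gK}(I(\sigma),\WhL)=|W|$, i.e.\ maps into $\WhL$. Replacing $\WhL$ by $\duWhL$ there amounts to identifying $\duWhL$ with a standard Whittaker module. That is Theorem~\ref{theorem:2nd duality theorem}, whose proof runs through this very lemma (via Lemma~\ref{lemma:translation of duWhL from wall} and Proposition~\ref{proposition:dim of emb to duWhL is |W|}), so that route is circular.

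Your last paragraph is closer to the paper. The paper also dualizes, decomposes $\stWh{-\eta}{-\Lambda}{\circ}\simeq\bigoplus_{X}E(X)^{\oplus c_{\dim\lie{n}}(X)}$, and invokes Corollary~\ref{corollary:multi of E(Xj) in WhL}. But this yields only the upper bound, and even that needs a fact you leave implicit. Since $\Lambda\in C^{+}$, $I(\sigma)^{c}$ has a unique irreducible submodule $V$ (its Langlands submodule). Hence $\dim\Hom_{\gK}(E(X),I(\sigma)^{c})\le[E(X):V]$, and summing gives at most $[\stWh{-\eta}{-\Lambda}{\circ}:V]=|W|$.

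The genuinely missing idea is the lower bound. For each large $X$ with $[E(X):V]=1$ you must exhibit a \emph{nonzero} homomorphism $E(X)\to I(\sigma)^{c}$, and ``the map is governed by $V$'' does not provide one. A nonzero map $E(X)\to E(V)$ exists by injectivity, but nothing forces it to land in the submodule $I(\sigma)^{c}$ of $E(V)$.

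The paper supplies this analytically. It realizes $E(X)$ as a direct summand of some $\stWh{\eta'}{-\Lambda}{}$. It then composes the Jacquet integral $J_{\sigma^{c},\Lambda}$ with Matumoto's boundary value map $\beta\colon\stWh{\eta'}{-\Lambda}{\circ}\to\Ind_{AN}^{G}(e^{-\Lambda+\rho}\otimes 1_{N})_{K}$. By Goodman--Wallach's leading-term computation, $\beta\circ J_{\sigma^{c},\Lambda}=A_{w_{0}}(\sigma^{c},\Lambda)$, which is nonzero on the Langlands quotient. Therefore $\beta$, followed by projection onto the summand $I(\sigma)^{c}$, is nonzero on $E(X)$. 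Without an ingredient of this kind, your argument proves only $\le|W|$.
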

\begin{proof}
We consider the space of dual homomorphisms 
\begin{align}
\Hom_{\gK}&
(\Ind_{MAN}^{G}(\sigma \otimes e^{\Lambda+\rho} \otimes 1_{N})_{K},  
\duWhL) 
\notag
\\
&\simeq 
\Hom_{\gK}(\stWh{-\eta}{-\Lambda}{\circ}, 
\Ind_{MAN}^{G}
(\sigma^{c} \otimes e^{-\Lambda+\rho} \otimes 1_{N})_{K})
\label{equation:Hom WhL to PS}
\end{align}
given by 
Proposition
~\ref{proposition:realization of dual Whittaker gK module}(1). 
Here, $\sigma^{c}$ is the contragredient representation of $\sigma$. 

First, we show that the dimension 
of \eqref{equation:Hom WhL to PS} is at most $|W|$. 
In fact, 
let $V$ be the Langlands submodule of 
$\Ind_{MAN}^{G}
(\sigma^{c} \otimes e^{-\Lambda+\rho} \otimes 1_{N})_{K}$. 
This is the unique irreducible submodule of 
$\Ind_{MAN}^{G}
(\sigma^{c} \otimes e^{-\Lambda+\rho} \otimes 1_{N})_{K}$ 
by the Langlands classification 
and the condition $\Lambda \in C^{+}$. 
Therefore, 
the dimension of \eqref{equation:Hom WhL to PS} is 
less than or equal to the multiplicity of $V$ in the composition
series of $\WhL$.
This can be shown by induction on the multiplicity.  
As sated in the proof of the previous corollary, 
this multiplicity is equal to $|W|$. 

In order to show the opposite estimate, we recall a result of 
Goodman-Wallach in \cite{GW}. 
Let $\eta'$  be a non-degenerate unitary character of $N$. 
Suppose 
$\sigma' \in \widehat{M}$ and $\nu \in C^{+}$ is generic. 
They obtained 
the ``Harish-Chandra expansion'' of the Jacquet integral (\cite{Jac})
\begin{align*}
& 
J_{\sigma', \nu} 
: 
\Ind_{MA\overline{N}}^{G}
(\sigma' \otimes e^{-\nu-\rho}
\otimes 1_{\overline{N}})_{K} 
\rightarrow 
\stWh{\eta'}{-\nu}{}, 
& 
& 
J_{\sigma', \nu}(f)(g) 
= 
\int_{N} f(g n) \eta'(n)\, dn.  
\end{align*}
Theorem~7.10 of \cite{GW} implies that 
the ``leading term'' of $J_{\sigma', \nu}(f)$ for 
$f \in 
\Ind_{MA\overline{N}}^{G}
(\sigma' \otimes e^{-\nu-\rho}
\otimes 1_{\overline{N}})_{K}$ 
is given by 
\begin{equation}\label{equation:leading term of Jacquet integral} 
A_{w_{0}}(\sigma', \nu)(f)
\in 
\Ind_{MAN}^{G}
(\sigma' \otimes e^{-\nu+\rho} \otimes 1_{N})_{K}, 
\end{equation}
where 
\[
A_{w_{0}}(\sigma', \nu) : 
\Ind_{MA\overline{N}}^{G}
(\sigma' \otimes e^{-\nu-\rho}
\otimes 1_{\overline{N}})_{K}
\rightarrow 
\Ind_{MAN}^{G}
(\sigma' \otimes e^{-\nu+\rho}
\otimes 1_{N})_{K}
\]
is the usual integral intertwining operator between two principal
series, from positive to negative. 

On the other hand, Matumoto proved 
Theorem~\ref{theorem:Matumoto, comp series} 
by using the boundary value maps 
(\cite{Kashiwara-Oshima}, \cite{Oshima ASPM1984}), 
which are (roughly) the maps
of ``taking leading terms'' of Whittaker functions. 
Especially, there is a surjective $\brgK$-homomorphism (see \cite{M0}) 
\begin{align} 
& 
\beta:  
\stWh{\eta'}{-\nu}{\circ} 
= \stWh{\eta'}{-w_{0}\nu}{\circ} 
\twoheadrightarrow 
\Ind_{AN}^{G}
(e^{-\nu+\rho} \otimes 1_{N})_{K}.
\label{equation:boundary value map} 
\end{align}
By composing these maps, we obtain a $\brgK$-homomorphism  
\begin{align}
\beta \circ J_{\sigma', \nu}: 
& 
\Ind_{MA\overline{N}}^{G}
(\sigma' \otimes e^{-\nu-\rho}
\otimes 1_{\overline{N}})_{K} 
\rightarrow 
\Ind_{MAN}^{G}
(\sigma' \otimes e^{-\nu+\rho}
\otimes 1_{N})_{K}, 
\label{eq:boundary circ Jacquet}
\\
& 
\beta\circ J_{\sigma', \nu}(f) 
= 
A_{w_{0}}(\sigma', \nu)(f). 
\notag
\end{align}
By the analyticity of the Jacquet integral and the fact that
\eqref{equation:boundary value map} is well-defined for non-generic
$\nu$, 
this formula is valid for
$\nu \in C^{+}$ which is not necessarily generic. 

Now return to the proof of the lemma. 
We want to show that the dimension of \eqref{equation:Hom WhL to PS}
is at least $|W|$. 
For this, we use Corollary~\ref{corollary:WhL as injective module}(1): 
\[
\stWh{-\eta}{-\Lambda}{\circ}
\simeq 
\bigoplus_{X \in \mathcal{H}_{G}[-\Lambda]_{\mathrm{irr}}, 
\dim(X) = \dim \lie{n}} 
E(X)^{\oplus c_{\dim \lie{n}}(X)}. 
\]
Let $V$ be the Langlands submodule of 
$\Ind_{MAN}^{G}
(\sigma^{c} \otimes e^{-\Lambda+\rho} \otimes 1_{N})_{K}$. 
By Corollary~\ref{corollary:multi of E(Xj) in WhL}, 
it suffices to show that 
\begin{equation}
\label{eq:Hom(E(X),Ind)}
\dim 
\Hom_{\gK}(E(X), 
\Ind_{MAN}^{G}
(\sigma^{c} \otimes e^{-\Lambda+\rho} \otimes 1_{N})_{K}) 
\geq 1 
\end{equation} 
for every irreducible large module $X \in \mathcal{H}_{G}[-\Lambda]$ 
which contains $V$ as a composition factor. 
Fix such $X$. 
By Theorem~\ref{theorem:irred sub of WhLmg}(1), there exists a
non-degenerate character $\eta'$ of $N$ such that $E(X)$ is a direct
summand of $\stWh{\eta'}{-\Lambda}{}$. 
Consider the map \eqref{eq:boundary circ Jacquet} for 
$\sigma'=\sigma^{c}$ and $\nu = \Lambda$. 
Then by the Langlands classification, 
the map \eqref{eq:boundary circ Jacquet} is non-zero on the
Langlands quotient module $V$ of 
$\Ind_{MA\overline{N}}^{G}
(\sigma^{c} 
\otimes e^{-\Lambda-\rho} 
\otimes 1_{\overline{N}})_{K}$, 
so $\beta$ is non-zero on $E(X)$ 
and \eqref{eq:Hom(E(X),Ind)} holds. 
This completes the proof. 
\end{proof}

For the proof of Lemma~\ref{lemma:translation of duWhL from wall} below, 
we need the following estimate. 
\begin{lemma}\label{lemma:sphePS to Pi0}
Suppose $G$ is a split group. 
Denote by $1_{M}$ the trivial representation of $M$. 
If $\Lambda \in \overline{C^{+}}$, then 
\[
\dim 
\Hom_{\gK}(\Ind_{MAN}^{G}(1_{M} \otimes e^{\Lambda+\rho} \otimes 1_{N})_{K}, 
\duWhL) 
\leq 
|W|. 
\]
\end{lemma}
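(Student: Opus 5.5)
We dualize exactly as in the first half of the proof of Lemma~\ref{lemma:genPS to Pi0-2}. By Proposition~\ref{proposition:realization of dual Whittaker gK module}(1), $\duWhL \simeq (\stWh{-\eta}{-\Lambda}{\circ})^{c}$. Taking contragredients (the trivial representation of $M$ is self-dual) gives
\[
\Hom_{\gK}(\Ind_{MAN}^{G}(1_{M} \otimes e^{\Lambda+\rho} \otimes 1_{N})_{K}, \duWhL)
\simeq
\Hom_{\gK}(\stWh{-\eta}{-\Lambda}{\circ},
\Ind_{MAN}^{G}(1_{M} \otimes e^{-\Lambda+\rho} \otimes 1_{N})_{K}).
\]
So it suffices to bound the right-hand side by $|W|$. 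Here $\Lambda$ is only assumed to lie in $\overline{C^{+}}$, so we cannot use the Langlands-submodule argument verbatim.

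The first step is to identify the socle of the target
\[
I := \Ind_{MAN}^{G}(1_{M} \otimes e^{-\Lambda+\rho} \otimes 1_{N})_{K}.
\]
Since $-\Lambda$ is in the closed negative chamber, $I$ is the dual of the principal series $\Ind_{MAN}^{G}(1_{M}\otimes e^{\Lambda+\rho}\otimes 1_N)_K$. For the spherical principal series with parameter in the closed positive chamber, the latter has a unique irreducible quotient, namely its spherical subquotient. Indeed, it is generated by its $K$-fixed vector (Kostant/Helgason), and the trivial $K$-type occurs with multiplicity one. Hence $I$ has a unique irreducible submodule $V$, the spherical composition factor, and $\soc I = V$ occurs with multiplicity one as a composition factor of $I$. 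This is the substitute for the Langlands classification, and it works also on the walls. I expect this to be the main point requiring care: the argument must not depend on regularity of $\Lambda$, and this is why the statement is restricted to $1_{M}$.

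The second step is the standard induction. Let $Y$ be any finite-length module. We show by induction on the length of $Y$ that $\dim \Hom_{\gK}(Y, I)$ is at most the multiplicity $[Y:V]$ of $V$ in $Y$.

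For the inductive step, take a short exact sequence $0 \to Y' \to Y \to Y'' \to 0$ with $Y''$ irreducible. Left exactness of $\Hom(-,I)$ gives
\[
\dim\Hom(Y,I) \le \dim\Hom(Y'',I) + \dim\Hom(Y',I).
\]
For the irreducible $Y''$, $\dim\Hom(Y'',I)$ is $0$ unless $Y'' \simeq V$, and it is $1$ in that case, because $\soc I = V$ with multiplicity one. Applying this to $Y = \stWh{-\eta}{-\Lambda}{\circ}$, it remains to show that $[\stWh{-\eta}{-\Lambda}{\circ}:V] \le |W|$.

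The third step is a character computation. By Corollary~\ref{corollary:character of WhL}, together with Remark~\ref{remark:may assume dominant} to pass freely between $-\Lambda$ and $-w_{0}\Lambda$,
\[
\ch \stWh{-\eta}{-\Lambda}{\circ}
= |W| \sum_{\sigma \in \widehat{M}} \ch \Ind_{MAN}^{G}(\sigma \otimes e^{-\Lambda+\rho} \otimes 1_{N})_{K}.
\]
The spherical module $V$ contains the trivial $K$-type. Among the summands, only $\sigma = 1_{M}$ contains the trivial $K$-type, since by Frobenius reciprocity $\Hom_K(1_K, \Ind_{M}^{K}\sigma) = \Hom_M(1_M,\sigma)$. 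In that summand the trivial $K$-type has multiplicity one. Hence the multiplicity of $V$ is exactly $|W|$, which gives the bound.
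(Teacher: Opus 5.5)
Your argument is correct, but it works on the dual side, whereas the paper works directly with $\duWhL$.

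\textbf{The paper's route.} The paper applies Kostant's cyclicity theorem (Theorem~5.2.1 of \cite{Kos}) immediately. Since $\Ind_{MAN}^{G}(1_{M}\otimes e^{\Lambda+\rho}\otimes 1_{N})_{K}$ is generated by its $K$-fixed vector $f_{0}$, a homomorphism into $\duWhL$ is determined by the image of $f_{0}$, which is a $K$-fixed vector in $\duWhL$. The decomposition of $\Ug$ in Proposition~\ref{proposition:Osborne?} then gives $\res_{\gK}^{\kK}\duWhL\simeq C(K)\otimes E$, whose $K$-fixed subspace has dimension $\dim E=|W|$.

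\textbf{Your route.} You first pass to $\Hom_{\gK}(\stWh{-\eta}{-\Lambda}{\circ}, I)$ via Proposition~\ref{proposition:realization of dual Whittaker gK module}(1). You then use the same cyclicity, in dual form, to show that $I$ has simple spherical socle of multiplicity one. Next you run the length induction from Lemma~\ref{lemma:genPS to Pi0-2}, and finally you count with Matumoto's character formula (Corollary~\ref{corollary:character of WhL}).

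\textbf{Comparison.} Both proofs ultimately bound the Hom space by the multiplicity of the trivial $K$-type, which is $|W|$ on either side. The paper computes this in $\duWhL$ using the structure of $\Ug$. You compute it in $\stWh{-\eta}{-\Lambda}{\circ}$ using $\res_{K}\stWh{-\eta}{-\Lambda}{\circ}\simeq C(K)^{\oplus|W|}$. Indeed, your steps two and three collapse to $\dim\Hom_{\gK}(Y,I)\le[Y:V]\le$ (multiplicity of the trivial $K$-type in $Y$).

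The paper's route is shorter: it needs no dualization, no socle analysis and no induction. Yours reuses the framework of Lemma~\ref{lemma:genPS to Pi0-2} and gives slightly more, namely that the spherical factor occurs exactly $|W|$ times in $\stWh{-\eta}{-\Lambda}{\circ}$.

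One minor point: if $M$ is non-abelian, $\Ind_{AN}^{G}$ decomposes with multiplicities $\dim\sigma$. This does not affect the $1_{M}$ term, so your count stands.
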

\begin{proof}
By Theorem~5.2.1 of \cite{Kos} and the discussion after it, 
if $\Lambda \in \overline{C^{+}}$, 
then $\Ind_{MAN}^{G}(1_{M} \otimes e^{\Lambda+\rho} \otimes 1_{N})_{K}$ 
is a cyclic $\Ug$-module generated by a $K$-fixed vector 
$f_{0} \in \Ind_{MAN}^{G}(1_{M} \otimes e^{\Lambda+\rho} \otimes 1_{N})_{K}$. 
Strictly speaking, in the paper \cite{Kos}, 
$G$ is assumed to be a real semisimple linear Lie group, 
but since the center of $G$ is contained in $MA$ and $MA$ meets every
component of $G$, 
this theorem is valid for our situation. 
It follows that 
\begin{align*}
\Hom_{\gK}&
(\Ind_{MAN}^{G}(1_{M} \otimes e^{\Lambda+\rho} \otimes 1_{N})_{K}, 
\duWhL) 
\\
& \subset 
\Hom_{K}(\C f_{0}, \res_{\gK}^{\kK} \duWhL). 
\end{align*}
 
On the other hand, by Proposition~\ref{proposition:Osborne?}, 
\begin{align*}
\res_{\gK}^{\kK} 
\duWhL 
&\simeq 
C(K) \otimes_{\Uk} \Ug \otimes_{\Zg \otimes \Un} \C_{\Lambda,\eta} 
\\
&\simeq 
C(K) \otimes_{\Uk} \Uk \otimes E \otimes \Zg \otimes \Un 
\otimes_{\Zg \otimes \Un} \C_{\Lambda,\eta} 
\\
&\simeq 
C(K) \otimes E \otimes \C_{\Lambda,\eta} 
\end{align*}
as $K$-modules. 
Therefore, 
\begin{align*}
\Hom_{\gK}&(\Ind_{MAN}^{G}(1_{M} \otimes e^{\Lambda+\rho} \otimes 1_{N})_{K}, 
\duWhL) 
\\
&\subset 
\Hom_{K}(\C f_{0}, C(K) \otimes E \otimes \C_{\Lambda,\eta})
\\
& \simeq 
\Hom_{\C}(\C, E),
\end{align*}
since the multiplicity of the trivial representation of $K$ in $C(K)$
is one. 
The lemma is the dimension inequality of this inclusion. 
\end{proof}

Let us consider the translation 
$\psi_{\Lambda}^{\Lambda'}$ of $\duWhL$ from singular to non-singular. 

Let $F$ be the finite dimensional irreducible $G$-module 
of which $\Lambda' - \Lambda$ is an extremal weight. 
In the terminology of \cite{Kos}, 
$\indWhL \otimes F$ is $\eta$-finite (Theorem~4.6 there), 
and so is the $\Ug$-submodule $\psi_{\Lambda}^{\Lambda'}(\indWhL)$ of
it. 
Therefore, Theorem~4.4 of \cite{Kos} implies that the space of
Whittaker vectors 
\[
V^{(0)} := \Wh_{\eta}(\psi_{\Lambda}^{\Lambda'}(\indWhL))
\]
is a finite dimensional $\Zg \otimes \Un$-module 
and, as a $\Ug$-module, 
$\psi_{\Lambda}^{\Lambda'}(\indWhL)$ is
generated by $V^{(0)}$. 
This module is non-zero by Theorem~4.6 of \cite{Kos}. 
Note that $V^{(0)}$ is the space of Whittaker vectors 
in $\indWhL \otimes F$ on which $\Zg$ acts by the {\it generalized} 
infinitesimal character $\Lambda'$. 

\begin{lemma}\label{lemma:translation of duWhL from wall}
Let $G$ be a split group.  
Suppose $\Lambda \in \overline{C^{+}}$ and $\Lambda' \in C^{+}$ 
satisfy $\Lambda' - \Lambda \in \mathcal{L}$.  
Then 
\[
S := 
\{u \in \psi_{\Lambda}^{\Lambda'}(\duWhL) 
\mid 
z u = \chi_{\Lambda'}(z) u \ (\forall z \in \Zg)\}
\]
is isomorphic to 
$\duWh{\Lambda'}{\eta}$. 
\end{lemma}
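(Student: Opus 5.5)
We study $S$ through the identification \eqref{equation:psi and Pi}, $\psi_{\Lambda}^{\Lambda'}(\duWhL)\simeq \Pi_{\lie{g},\{e\}}^{\gK}(\psi_{\Lambda}^{\Lambda'}(\indWhL))$, and through the Whittaker structure of $M:=\psi_{\Lambda}^{\Lambda'}(\indWhL)$ recalled just before the statement. By Theorem~4.4 of \cite{Kos}, $M$ is generated by its finite dimensional space of Whittaker vectors $V^{(0)}$, on which $\Zg$ acts with generalized infinitesimal character $\Lambda'$. Let $V^{(0)}_{1}:=\{v\in V^{(0)}\mid zv=\chi_{\Lambda'}(z)v\}$, which is nonzero since $V^{(0)}\neq 0$ is finite dimensional. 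Choose $0\neq v_{1}\in V^{(0)}_{1}$. Then $v_{1}$ spans a copy of $\C_{\Lambda',\eta}$, so we get a nonzero map $\indWh{\Lambda'}{\eta}\to M$. Since $\Lambda'\in C^{+}$ is regular, $\indWh{\Lambda'}{\eta}$ is irreducible (Theorem~3.6.1 of \cite{Kos}), so this map is injective. Applying $\Pi_{\lie{g},\{e\}}^{\gK}$ gives a map $\iota:\duWh{\Lambda'}{\eta}\to\psi_{\Lambda}^{\Lambda'}(\duWhL)$ whose image lies in $S$, because $\duWh{\Lambda'}{\eta}$ has infinitesimal character $\Lambda'$ by \cite[Theorem~5.21(b)]{KV}.

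\textbf{Injectivity of $\iota$.} Consider the short exact sequence $0\to\indWh{\Lambda'}{\eta}\to M\to Q\to0$. Its long exact sequence contains $(\Pi_{\lie{g},\{e\}}^{\gK})_{1}(Q)\to\duWh{\Lambda'}{\eta}\to\Pi_{\lie{g},\{e\}}^{\gK}(M)$. The plan is to show that $(\Pi_{\lie{g},\{e\}}^{\gK})_{1}(Q)$ does not meet the generalized $\Lambda'$-eigenspace, or to bypass this step entirely. A cleaner route is to use that $M$ has a finite filtration with subquotients $\indWh{w\Lambda'}{\eta}\simeq\indWh{\Lambda'}{\eta}$ (copies of the irreducible Kostant module), one for each distinct $W$-orbit coset $w\Lambda'$ with $w$ running through $W/W_{\Lambda}$. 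This would follow from $\indWhL\otimes F$ having a filtration by $\Ug\otimes_{\Zg\otimes\Un}(\C_{\Lambda,\eta}\otimes F|_{\lie{n}})$ and Kostant's theory. Proposition~\ref{proposition:realization of dual Whittaker gK module}(2) gives vanishing of all higher $\Pi_{i}$ on each $\indWh{\Lambda'}{\eta}$. Hence $\Pi_{\lie{g},\{e\}}^{\gK}$ is exact on this filtration, and $\psi_{\Lambda}^{\Lambda'}(\duWhL)$ acquires a filtration whose $m:=|W_{\Lambda}|$ subquotients are all isomorphic to $\duWh{\Lambda'}{\eta}$. In particular $\iota$ is injective.

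\textbf{Equality $\iota(\duWh{\Lambda'}{\eta})=S$.} This is the main obstacle: we must show that $\Zg$ acts non-semisimply, so that the $\chi_{\Lambda'}$-eigenspace is only the bottom piece. Here I would use Lemma~\ref{lemma:sphePS to Pi0} together with Lemma~\ref{lemma:genPS to Pi0-2}. By adjunction and Theorem~\ref{theorem:behavior under the translation functors}-type arguments,
\[
\Hom_{\gK}(\Ind_{MAN}^{G}(1_{M}\otimes e^{\Lambda'+\rho}\otimes1_{N})_{K},S)
\subset
\Hom_{\gK}(\psi_{\Lambda'}^{\Lambda}\Ind_{MAN}^{G}(1_{M}\otimes e^{\Lambda'+\rho}\otimes1_{N})_{K},\duWhL),
\]
and $\psi_{\Lambda'}^{\Lambda}$ of that principal series is $\Ind_{MAN}^{G}(1_{M}\otimes e^{\Lambda+\rho}\otimes1_{N})_{K}$. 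Lemma~\ref{lemma:sphePS to Pi0} then bounds the dimension by $|W|$. On the other hand, Lemma~\ref{lemma:genPS to Pi0-2} gives dimension exactly $|W|$ for $\duWh{\Lambda'}{\eta}$ itself. Thus $S$ admits no more maps from this principal series than $\iota(\duWh{\Lambda'}{\eta})$ does.

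To conclude $S=\iota(\duWh{\Lambda'}{\eta})$, suppose $S/\iota(\duWh{\Lambda'}{\eta})\neq0$. It is a quotient-compatible submodule of the remaining filtration pieces, so it contains a nonzero submodule of some $\duWh{\Lambda'}{\eta}$. Dually, $\duWh{\Lambda'}{\eta}$ is projective: it is the contragredient of the injective $\stWh{-\eta}{-\Lambda'}{\circ}$. Hence the map from the spherical principal series (whose image generates every projective summand in the regular case, by a cyclicity argument using Kostant's theorem) would lift and produce more than $|W|$ independent homomorphisms into $S$, contradicting the bound. Making this last counting precise, in particular checking that the spherical principal series maps nontrivially to every indecomposable summand, is where I expect the real difficulty to lie.
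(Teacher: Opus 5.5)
Your proposal has a genuine gap at the decisive step, showing $S=\iota(\duWh{\Lambda'}{\eta})$, and you acknowledge this yourself. The route you sketch cannot close it. Your ingredients are the right ones: the Whittaker space $V^{(0)}$, its $\chi_{\Lambda'}$-eigenspace, and the comparison of Lemma~\ref{lemma:sphePS to Pi0} with Lemma~\ref{lemma:genPS to Pi0-2}. But the Hom count, as you use it, only shows that every homomorphism from the chosen principal series $X_{\Lambda'}$ into $S$ lands in $T_{1}:=\iota(\duWh{\Lambda'}{\eta})$.

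Now suppose $S\supsetneq T_{1}$. Your filtration then only yields a nonzero submodule $N$ of some higher layer $\duWh{\Lambda'}{\eta}$. That $N$ need not be the whole layer, so projectivity of $\duWh{\Lambda'}{\eta}$ gives no splitting. Moreover, $X_{\Lambda'}$ is not projective, so there is no reason a map $X_{\Lambda'}\to N$ (if one exists at all) lifts to $S$. Hence no contradiction with the bound $|W|$ follows; the ``cyclicity'' claim is exactly the unproved content. Note also that ``$S$ is only the bottom piece'' is equivalent to the $\chi_{\Lambda'}$-eigenspace $V^{(1)}$ of $V^{(0)}$ being one-dimensional. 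Mere non-semisimplicity of the $\Zg$-action is not enough.

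The missing idea is to describe $S$ exactly rather than through a filtration. Kostant's theorem gives $\psi_{\Lambda}^{\Lambda'}(\indWhL)\simeq\Ug\otimes_{\Zg\otimes\Un}V^{(0)}$; this, together with freeness of $\Ug$ over $\Zg\otimes\Un$, is what actually underlies your filtration. Then Proposition~\ref{proposition:Osborne?} gives $\psi_{\Lambda}^{\Lambda'}(\duWhL)\simeq C(K)\otimes E\otimes V^{(0)}$. Since $\Zg$ is central, it acts only through the factor $V^{(0)}$. A basis argument then gives $S=(C(K)\otimes E)V^{(1)}\simeq\Pi_{\lie{g},\{e\}}^{\gK}(\indWh{\Lambda'}{\eta})^{\oplus d'}$ with $d'=\dim V^{(1)}$.

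With this, your Hom count finishes at once, applied to determine $d'$ rather than to compare $S$ with $T_{1}$. By adjunction, $\Hom_{\gK}(X_{\Lambda'},S)\simeq\Hom_{\gK}(\psi_{\Lambda'}^{\Lambda}X_{\Lambda'},\duWhL)$, which has dimension at most $|W|$. The left side has dimension $d'|W|$ by Lemma~\ref{lemma:genPS to Pi0-2}, so $d'=1$. Your injectivity and projectivity discussion becomes unnecessary.

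One further correction. $\psi_{\Lambda'}^{\Lambda}$ can twist the $M$-parameter by the restriction of $\Lambda-\Lambda'$ to $M$, so you cannot simply take $\sigma=1_{M}$. Instead choose $\sigma$ so that $\psi_{\Lambda'}^{\Lambda}\Ind_{MAN}^{G}(\sigma\otimes e^{\Lambda'+\rho}\otimes 1_{N})_{K}$ is the spherical principal series at $\Lambda$. This is harmless, because Lemma~\ref{lemma:genPS to Pi0-2} holds for every $\sigma$.
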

\begin{proof}
By \eqref{equation:psi and Pi}, the definition of Bernstein functor
$\Pi_{\lie{g},\{e\}}^{\gK}$ and Proposition~\ref{proposition:Osborne?}, 
\begin{equation}\label{equation:pf of main lemma-1}
\begin{split}
\psi_{\Lambda}^{\Lambda'}(\duWhL) 
&\simeq 
\Pi_{\lie{g},\{e\}}^{\gK}(\psi_{\Lambda}^{\Lambda'}(\indWhL)) 
\simeq 
\Pi_{\lie{g},\{e\}}^{\gK}(\Ug\, V^{(0)}) 
\\
&\simeq 
R(\gK) \otimes_{\Zg \otimes \Un} V^{(0)} 
\simeq 
C(K) \otimes \Ug \otimes_{\Zg \otimes \Un} V^{(0)}
\\
& \simeq 
C(K) \otimes \Uk \otimes E \otimes \Zg \otimes \Un 
\otimes_{\Zg \otimes \Un} V^{(0)}
\\
&\simeq 
C(K) \otimes E \otimes V^{(0)}. 
\end{split}
\end{equation}
Define a non-zero space 
\[
V^{(1)} 
:=
\{v \in V^{(0)} \mid 
(z-\chi_{\Lambda'}(z)) v = 0 \ \forall z \in \Zg\}. 
\]
Let $d$ and $d'$ be the dimensions of $V^{(0)}$ and $V^{(1)}$,
respectively. 
Choose a basis $v_{1}, v_{2}, \dots, v_{d}$ of $V^{(0)}$ such that 
$v_{1}, v_{2}, \dots, v_{d'}$ is a basis of $V^{(1)}$. 

Assume that $u \in S$. 
Take a basis $\{p_{j}\}_{j}$ of $C(K) \otimes E$.  
Then by \eqref{equation:pf of main lemma-1}, 
there exist constants
$c_{i,j}$ ($i=1,2, \dots, d$, $j=1,2,\dots$) such that 
\begin{equation}\label{equation:pf of main lemma-2} 
u = \sum_{i=1}^{d} \sum_{j} c_{i,j} p_{j} v_{i}.
\end{equation}
Since $\Zg$ is central in $R(\gK)$, 
\begin{align*}
0 
&= (z-\chi_{\Lambda'}(z)) u 
= 
\sum_{i=1}^{d} \sum_{j\geq 1} c_{i,j} p_{j}(z-\chi_{\Lambda'}(z))v_{i}
\\
&=
\sum_{j\geq 1} p_{j} 
\left((z-\chi_{\Lambda'}(z)) \sum_{i=d'+1}^{d} c_{i,j} v_{i}\right). 
\end{align*}
Note that $v_{i}$ ($1 \leq i \leq d'$) are annihilated by 
$z-\chi_{\Lambda'}(z)$ since they are elements of $V^{(1)}$. 
By the linear independence of $\{p_{j}\}_{j}$, 
\[
(z-\chi_{\Lambda'}(z)) \sum_{i=d'+1}^{d} c_{i,j} v_{i}
= 0 
\qquad 
\mbox{for any } j \mbox{ and for any } z\in \Zg,  
\]
which implies that the sum in the left hand side is contained
$V^{(1)}$. 
By the choice of the basis $\{v_{i}\}$, we have 
\[\sum_{i=d'+1}^{d} c_{i,j} v_{i}
= 0 
\qquad 
(\mbox{for any } j),  
\]
and finally by the linear independence of $\{v_{i}\}$, 
\[
c_{i,j} = 0 \qquad \mbox{ for any $j$ and any $i=d'+1,\dots,d$.}
\]
Therefore, 
$u$ is contained in $(C(K) \otimes E) V^{(1)}$ 
so $S \simeq (C(K) \otimes E) V^{(1)}$,  
and we have 
\begin{align*}
S & \simeq  
\Pi_{\lie{g},\{e\}}^{\gK}(\Ug \otimes_{\Zg \otimes \Un} V^{(1)}) 
\\
& = 
\Pi_{\lie{g},\{e\}}^{\gK} 
\left(
\sum_{i=1}^{d'} 
\Ug \otimes_{\Zg \otimes \Un} \C v_{i} 
\right) 
\\
& \simeq 
\Pi_{\lie{g},\{e\}}^{\gK} 
\left(\bigoplus_{i=1}^{d'} \indWh{\Lambda'}{\eta}\right) 
\simeq 
\Pi_{\lie{g},\{e\}}^{\gK}(\indWh{\Lambda'}{\eta})^{\oplus d'} 
\end{align*}
since every $v_{i} \in V^{(1)}$ ($i=1,2,\dots,d'$) generates
$Y_{\Lambda',\eta}$ as a $\Ug$-module by Theorem~4.4 of \cite{Kos}. 
We have shown that 
$S 
\simeq 
\Pi_{\lie{g},\{e\}}^{\gK}(\indWh{\Lambda'}{\eta})^{\oplus d'}$. 
If we show $d'=1$, then the proof is done. 

Let $X_{\Lambda'}$ be a $\brgK$-module admitting the infinitesimal
character $\Lambda'$. 
Since the image of $X_{\Lambda'}$ by a $\brgK$-homomorphism 
in 
$\Hom_{\gK}(X_{\Lambda'}, 
\psi_{\Lambda}^{\Lambda'}(\duWhL))$ 
is
contained in the space $S$, we have 
\begin{align*}
\Hom_{\gK}&(\psi_{\Lambda'}^{\Lambda}(X_{\Lambda'}), \duWhL)
\\
&\simeq 
\Hom_{\gK}(X_{\Lambda'}, 
\psi_{\Lambda}^{\Lambda'}(\duWhL))
\\
&\simeq 
\Hom_{\gK}(X_{\Lambda'}, S)
\simeq 
\Hom_{\gK}(X_{\Lambda'}, 
\Pi_{\lie{g}, \{e\}}^{\gK}(\indWh{\Lambda'}{\eta})^{\oplus d'}). 
\end{align*}
By this, 
\begin{equation}
\label{eq:proof of Lem6.6-1}
\dim 
\Hom_{\gK}(\psi_{\Lambda'}^{\Lambda}(X_{\Lambda'}), \duWhL)
= 
d' \times 
\dim 
\Hom_{\gK}(X_{\Lambda'}, 
\Pi_{\lie{g}, \{ e\}}^{\gK}(\indWh{\Lambda'}{\eta})). 
\end{equation}
By the assumption $\Lambda \in \overline{C^{+}}$ 
and $\Lambda' \in C^{+}$ of this lemma, $\psi_{\Lambda'}^{\Lambda}$ is
a translation functor from non-singular to singular. 
We can choose $\sigma \in \widehat{M}$ so that 
\[
\psi_{\Lambda'}^{\Lambda} 
(\Ind_{MAN}^{G}(\sigma \otimes e^{\Lambda'+\rho} \otimes 1_{N})_{K}) 
\simeq 
\Ind_{MAN}^{G}(1_{M} \otimes e^{\Lambda+\rho} \otimes 1_{N})_{K}. 
\]
Put 
$X_{\Lambda'} 
:= \Ind_{MAN}^{G}(\sigma \otimes e^{\Lambda'+\rho} \otimes
1_{N})_{K}$. 
Then 
\begin{align}
\dim 
\Hom_{\gK}&(\psi_{\Lambda'}^{\Lambda}(X_{\Lambda'}), \duWhL)
\notag\\
&= 
\dim
\Hom_{\gK}
(\Ind_{MAN}^{G}(1_{M} \otimes e^{\Lambda+\rho} \otimes 1_{N})_{K}, 
\duWhL)
\notag\\
&\leq |W|
\label{eq:proof of Lem6.6-2}
\end{align}
by Lemma~\ref{lemma:sphePS to Pi0}, 
and 
\begin{align}
\dim 
\Hom_{\gK}&(X_{\Lambda'}, 
\Pi_{\lie{g}, \{e\}}^{\gK}(\indWh{\Lambda'}{\eta}))
\notag\\
&= 
\Hom_{\gK}
(\Ind_{MAN}^{G}(\sigma \otimes e^{\Lambda'+\rho} \otimes 1_{N})_{K}, 
\Pi_{\lie{g}, \{e\}}^{\gK}(\indWh{\Lambda'}{\eta}))
\notag\\
&= |W|
\label{eq:proof of Lem6.6-3}
\end{align}
by Lemma~\ref{lemma:genPS to Pi0-2}. 
It follows that 
\begin{align*}
|W| &\underset{\eqref{eq:proof of Lem6.6-2}}{\geq}
\dim 
\Hom_{\gK}(\psi_{\Lambda'}^{\Lambda}(X_{\Lambda'}), \duWhL)
\\
&\underset{\eqref{eq:proof of Lem6.6-1}}{=} 
d' \times 
\dim 
\Hom_{\gK}(X_{\Lambda'}, 
\Pi_{\lie{g}, \{e\}}^{\gK}(\indWh{\Lambda'}{\eta}))
\underset{\eqref{eq:proof of Lem6.6-3}}{=}
d' |W|, 
\end{align*}
and $d'$ must be one. This completes the proof. 
\end{proof}
\begin{corollary}\label{corollary:hom, translation from wall}
Suppose $G$ is a split group 
and $\Lambda \in \overline{C^{+}}$, $\Lambda' \in C^{+}$ 
satisfy $\Lambda' - \Lambda \in \mathcal{L}$. 
For $X \in \mathcal{H}_{G}[\Lambda']^{(1)}$, 
\[
\Hom_{\gK}(\psi_{\Lambda'}^{\Lambda}(X), 
\duWhL) 
\simeq 
\Hom_{\gK}(X, \duWh{\Lambda'}{\eta}). 
\]
\end{corollary}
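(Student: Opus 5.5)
The plan is to combine the adjunction for translation functors with the description of the $\Lambda'$-eigenspace $S$ obtained in Lemma~\ref{lemma:translation of duWhL from wall}. First, $\psi_{\Lambda'}^{\Lambda}$ and $\psi_{\Lambda}^{\Lambda'}$ are adjoint to each other on Harish-Chandra modules with generalized infinitesimal character, in both orders (cf. \cite[Chapter VII]{KV}); this is the adjunction already used in the proof of Theorem~\ref{theorem:behavior under the translation functors}. Hence
\[
\Hom_{\gK}(\psi_{\Lambda'}^{\Lambda}(X), \duWhL)
\simeq
\Hom_{\gK}(X, \psi_{\Lambda}^{\Lambda'}(\duWhL)).
\]
Here $\duWhL$ has infinitesimal character $\Lambda$ by \cite[Theorem~5.21(b)]{KV}, so it lies in $\mathcal{H}_{G}[\Lambda]$, and $\psi_{\Lambda}^{\Lambda'}(\duWhL)$ lies in $\mathcal{H}_{G}[\Lambda']$ but a priori only with generalized infinitesimal character $\Lambda'$.

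Next, since $X \in \mathcal{H}_{G}[\Lambda']^{(1)}$, every $z \in \Zg$ acts on $X$ by the scalar $\chi_{\Lambda'}(z)$. Any $\brgK$-homomorphism $\phi : X \to \psi_{\Lambda}^{\Lambda'}(\duWhL)$ commutes with $\Zg$, so for $x\in X$
\[
z\,\phi(x) = \phi(zx) = \chi_{\Lambda'}(z)\,\phi(x),
\]
and therefore $\phi(X) \subset S$. The subspace $S$ is a $\brgK$-submodule, because $\Zg$ is central in $R(\gK)$ and commutes with $K$. The inclusion $S \hookrightarrow \psi_{\Lambda}^{\Lambda'}(\duWhL)$ thus induces an isomorphism
\[
\Hom_{\gK}(X, S) \simeq \Hom_{\gK}(X, \psi_{\Lambda}^{\Lambda'}(\duWhL)).
\]
This is the same observation that appears in the proof of Lemma~\ref{lemma:translation of duWhL from wall}.

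Finally, Lemma~\ref{lemma:translation of duWhL from wall} gives $S \simeq \duWh{\Lambda'}{\eta}$. Composing the three isomorphisms yields the claim.

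The only point that needs care is the adjunction in the first step. One must check that the adjunction holds in the direction needed, with $\psi_{\Lambda'}^{\Lambda}$ on the left, and for objects that have only generalized infinitesimal character. This follows from the standard fact that $\psi$ is given by tensoring with $F$ or its dual followed by projection to a generalized infinitesimal character; since $F^{\ast}\simeq \overline{F}$ up to the appropriate extremal weight, the two functors are mutually adjoint. Everything else is formal.
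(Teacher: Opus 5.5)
Your proposal is correct and follows the paper's proof exactly. It uses the adjunction $\Hom_{\gK}(\psi_{\Lambda'}^{\Lambda}(X), \duWhL) \simeq \Hom_{\gK}(X, \psi_{\Lambda}^{\Lambda'}(\duWhL))$, observes that every homomorphism from $X$ lands in the $\chi_{\Lambda'}$-eigenspace $S$, and concludes with Lemma~\ref{lemma:translation of duWhL from wall}. Your closing justification of the adjunction is slightly garbled but points to the right standard fact, which the paper also uses: $\psi_{\Lambda'}^{\Lambda}$ is built from $F^{\ast}$, whose extremal weight is $\Lambda-\Lambda'$; $F\otimes(\cdot)$ and $F^{\ast}\otimes(\cdot)$ are mutually adjoint; and the projections onto generalized infinitesimal characters split off direct summands.
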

\begin{proof}
Since $X$ admits the infinitesimal character $\Lambda'$, 
the image of an element of 
$\Hom_{\gK}(X, 
\psi_{\Lambda}^{\Lambda'}(\duWhL)$ is contained in $S$ 
defined in Lemma~\ref{lemma:translation of duWhL from wall}. 
Therefore, 
\begin{align*}
\Hom_{\gK}(\psi_{\Lambda'}^{\Lambda}(X), 
\duWhL) 
&\simeq 
\Hom_{\gK}(X, 
\psi_{\Lambda}^{\Lambda'}(\duWhL)) 
\\
&\simeq 
\Hom_{\gK}(X, S) 
\\
&\simeq 
\Hom_{\gK}(X, \duWh{\Lambda'}{\eta}) 
\quad \mbox{by Lemma~\ref{lemma:translation of duWhL from wall}}. 
\end{align*} 
\end{proof}
\begin{proposition}\label{proposition:sub of duWhL} 
Suppose $G$ is a split group. 
\begin{enumerate}
\item
If an irreducible module $X \in \mathcal{H}_{G}[\Lambda]^{(1)}$ 
is a submodule of $\duWhL$, then $X$ is large. 
\item
Suppose $\Lambda' \in C^{+}$. 
Let $X' \in \mathcal{H}_{G}[\Lambda']^{(1)}$ be 
an irreducible large module 
and let $\{X'(\Lambda'+\mu)\}_{\mu}$ be the coherent family 
based on $X'$. 
Then as far as $\Lambda'+\mu$ is contained 
in $\overline{C^{+}}$, 
the multiplicity of the module 
$X'(\Lambda'+\mu)$ in the socle of $\duWh{\Lambda'+\mu}{\eta}$ 
does not depend on $\mu$. 
\end{enumerate}
\end{proposition}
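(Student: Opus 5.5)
For (1), I will pass to contragredients. By Proposition~\ref{proposition:realization of dual Whittaker gK module}(1), $\duWhL \simeq (\stWh{-\eta}{-\Lambda}{\circ})^{c}$. So an irreducible submodule $X \subset \duWhL$ gives a surjection $\stWh{-\eta}{-\Lambda}{\circ} \twoheadrightarrow X^{c}$. Hence $X^{c}$ is an irreducible quotient of $\stWh{-\eta}{-\Lambda}{\circ}$.

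By Corollary~\ref{corollary:WhL as injective module}(1), this module is a direct sum of injective envelopes $E(Y)$ with $Y$ large. So $X^{c}$ is an irreducible quotient of some $E(Y)$, and I need to show that irreducible quotients of such $E(Y)$ are large. I would try the following argument. Suppose $X^{c}$ is a quotient of $E(Y)$. Then $\Hom_{\gK}(E(Y), X^{c}) \neq 0$, so dually $\Hom_{\gK}(X, E(Y)^{c}) \neq 0$, and $E(Y)^{c}$ is a direct summand of $\duWhL$.

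Instead of pursuing that directly, I would use Whittaker vectors. Theorem~\ref{theorem:exactness of Wh vector}(1) gives a nonzero map $X \to \duWhL$. Applying the exact functor $V\mapsto \Wh_{\eta}(V^{\ast})$ turns the surjection $\stWh{-\eta}{-\Lambda}{\circ} \twoheadrightarrow X^{c}$ into an injection $\Wh(X^{c\ast}) \hookrightarrow \Wh((\stWh{-\eta}{-\Lambda}{\circ})^{\ast})$. That alone does not suffice.

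A cleaner route is Gelfand--Kirillov dimension. $\duWhL$ is a Bernstein-functor image of $\indWhL$, and by \eqref{equation:pf of main lemma-1} it is $C(K)\otimes E\otimes\C$ as a $K$-module. I would show every nonzero submodule of $\duWhL$ has GK dimension $\dim\lie{n}$ and positive Bernstein degree, via the following steps:
\begin{itemize}
\item An irreducible submodule $X$ of $\duWhL$ gives, by the adjunction $\Hom_{\gK}(X,\Pi\indWhL)$, and hence by Proposition~\ref{proposition:realization of dual Whittaker gK module}(2) and duality, a nonzero element of $\Hom_{\gK}(\stWh{-\eta}{-\Lambda}{\circ},X^{c})$.
\item Composing with the realization $\stWh{-\eta}{-\Lambda}{\circ}\simeq\Gamma(\Hom_{\Zg\otimes\Un}(\Ug,\C_{-\Lambda,-\eta}))$ yields, by Zuckerman adjunction, a nonzero $\Ug$-map $X \to \Hom_{\Zg\otimes\Un}(\Ug,\C_{-\Lambda,-\eta})$.
\item Such a map is a nonzero vector in $\Wh_{-\eta}(X^{\ast})$ (evaluate at $1$).
\item Kostant's theorem then gives that $X$ is large, since a module with a nonzero algebraic Whittaker functional for nondegenerate $\eta$ has maximal GK dimension (Theorem~\ref{theorem:irred sub of WhL} combined with the Goodman--Wallach/Matumoto embedding $\Psi_{v^{\ast}}$).
\end{itemize}
This is the plan for (1). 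The main care point is getting the adjunction directions right. Via $\Pi$ we have $\Hom(\Pi Y, X')=\Hom_{\Ug}(Y,X')$, so the natural statement is that quotients of $\duWhL$ receive Whittaker vectors. For submodules I will use the contragredient form above, where submodules of $\duWhL$ are duals of quotients of $\stWh{-\eta}{-\Lambda}{\circ}$, and Zuckerman's right adjointness.

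For (2), I would use Corollary~\ref{corollary:hom, translation from wall} with $X = X'$ irreducible. This gives
\[
\Hom_{\gK}(\psi_{\Lambda'}^{\Lambda'+\mu}X', \duWh{\Lambda'+\mu}{\eta}) \simeq \Hom_{\gK}(X', \duWh{\Lambda'}{\eta}).
\]
Since $X'$ is large, $\psi_{\Lambda'}^{\Lambda'+\mu}X' = X'(\Lambda'+\mu)$ is irreducible and nonzero, by Theorem~7.229 of \cite{KV} and the argument in Theorem~\ref{theorem:behavior under the translation functors}. The multiplicity of an irreducible $Z$ in a socle equals $\dim\Hom(Z,\cdot)$ (up to endomorphism division, trivial here by Schur). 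The two multiplicities therefore coincide, and this settles (2).

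The main obstacle is the adjunction bookkeeping in (1).
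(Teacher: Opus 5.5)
Your part (2) is correct and is the paper's argument: Corollary~\ref{corollary:hom, translation from wall} applied to $X=X'$, together with the fact that the socle multiplicity of an irreducible $Z$ equals $\dim\Hom_{\gK}(Z,\cdot\,)$.

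Part (1) has a genuine gap, located in your second bullet. The Zuckerman functor $\Gamma_{\lie{g},\{e\}}^{\gK}$ is a \emph{right} adjoint of the forgetful functor. With $H=\Hom_{\Zg\otimes\Un}(\Ug,\C_{-\Lambda,-\eta})$, the adjunction only gives
$\Hom_{\Ug}(X,H)\simeq\Hom_{\gK}(X,\Gamma H)=\Hom_{\gK}(X,\stWh{-\eta}{-\Lambda}{\circ})$.
A surjection $\Gamma H\twoheadrightarrow X^{c}$ therefore yields no map $X\to H$. Such a map is in fact generally zero, since $X$ has infinitesimal character $\chi_{\Lambda}$ while $H$ has $\chi_{-\Lambda}$.

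This cannot be repaired by better adjunction bookkeeping. The left adjointness of $\Pi_{\lie{g},\{e\}}^{\gK}$ and the right adjointness of $\Gamma_{\lie{g},\{e\}}^{\gK}$ only control maps \emph{out of} $\duWhL$, respectively \emph{into} $\stWh{-\eta}{-\Lambda}{\circ}$. Both describe the same end: the duals of the socle of $\stWh{-\eta}{-\Lambda}{\circ}$, which are already known to be large by Theorem~\ref{theorem:irred sub of WhL}.

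Statement (1) is about the other end, namely the irreducible quotients of the injective summands $E(Y)$ of $\stWh{-\eta}{-\Lambda}{\circ}$. For these, no Whittaker functional is available a priori; as you noticed, exactness of $V\mapsto\Wh_{\eta}(V^{\ast})$ only gives an injection. Your GK-dimension claim is thus asserted but never proved.

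The missing idea is the translation-functor argument, using the same corollary you already invoke in (2).

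\emph{Regular case, $\Lambda\in C^{+}$.} Suppose $X$ is not large. Since $G$ is split, its $\tau$-invariant is nonempty. Hence there is $\mu\in\mathcal{L}$ with $\Lambda+\mu\in\overline{C^{+}}$ and $\psi_{\Lambda}^{\Lambda+\mu}X=0$. Corollary~\ref{corollary:hom, translation from wall} then gives
$\Hom_{\gK}(X,\duWhL)\simeq\Hom_{\gK}(\psi_{\Lambda}^{\Lambda+\mu}X,\duWh{\Lambda+\mu}{\eta})=0$.

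\emph{Singular case, $\Lambda\in\overline{C^{+}}\setminus C^{+}$.} Choose $\mu$ with $\Lambda+\mu\in C^{+}$. By \cite[Theorem~6.18]{Speh-Vogan}, $X=\psi_{\Lambda+\mu}^{\Lambda}X'$ for some irreducible $X'$, and $X'$ is large if and only if $X$ is. One more application of the corollary reduces this to the regular case.

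The real content sits in Lemma~\ref{lemma:translation of duWhL from wall}: the $\chi_{\Lambda'}$-eigenspace of $\psi_{\Lambda}^{\Lambda'}\duWhL$ is a single copy of $\duWh{\Lambda'}{\eta}$. That lemma is what substitutes for the Whittaker functional your argument lacks.
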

\begin{proof}
(1) We may assume $\Lambda \in \overline{C^{+}}$ 
(cf. Remark~\ref{remark:may assume dominant}). 
First, we show (1) when $\Lambda \in C^{+}$. 
Since $G$ is a split group, 
$X$ is large if and only if the $\tau$-invariant of it 
is empty. 
If $X$ is not large, 
then there exists $\mu \in \mathcal{L}$ such that 
$\Lambda + \mu \in \overline{C^{+}}$ 
and $\psi_{\Lambda}^{\Lambda+\mu}(X) = 0$. 
Apply Corollary~\ref{corollary:hom, translation from wall} 
for the case when $\Lambda$ is $\Lambda+\mu$ 
and $\Lambda'$ is $\Lambda$. 
Then 
$\Hom_{\gK}(X, \duWhL) 
\simeq \Hom_{\gK}
(\psi_{\Lambda}^{\Lambda+\mu}(X), \duWh{\Lambda+\mu}{\eta})
= 0$. 
This means that an irreducible submodule $\duWhL$ must be large. 

Consider the case when $\Lambda \in \overline{C^{+}} - C^{+}$. 
Choose $\mu \in \mathcal{L}$ so that $\Lambda+\mu \in C^{+}$. 
By Theorem~6.18 of \cite{Speh-Vogan}, 
there exists an irreducible module 
$X' \in \mathcal{H}_{G}[\Lambda+\mu]$ 
such that $\psi_{\Lambda+\mu}^{\Lambda}(X')=X$. 
Note that $X'$ is large if and only if $X$ is large. 
Since 
\begin{align*}
\Hom_{\gK}(X, \duWhL)
&\simeq 
\Hom_{\gK}(\psi_{\Lambda+\mu}^{\Lambda}(X'), 
\duWhL) 
\\
&\simeq 
\Hom_{\gK}(X', \duWh{\Lambda+\mu}{\eta})
\end{align*}
by Corollary~\ref{corollary:hom, translation from wall}, 
the claim (1) for this $\Lambda$ follows from the first part of this
proof.

(2) 
This is clear from 
Corollary~\ref{corollary:hom, translation from wall}. 
\end{proof}


\section{Main theorems}

In this section, we show the main theorems on the self-adjoint
properties of our modules $\WhL$, $\WhLmg$ and $E(X)$ for an
irreducible large module $X$.

For $\Lambda \in \overline{C^{+}}$, 
let $R(\Lambda)$ be the integral root system 
(cf. \cite[Definition~7.2.16]{Vogan Green}). 
Let $R^{+}(\Lambda) = R(\Lambda) \cap \Delta^{+}$ be 
its positive system compatible with $\Delta^{+}$ which defines 
$C^{+}$ (cf. Definition~\ref{definition:Weyl chamber}) 
and let $\Phi(\Lambda)$ be the corresponding set of simple roots. 
For an irreducible module $X$ of 
$\mathcal{H}_{G}[\Lambda]$, 
its $\tau$-invariant $\tau(X)$ is a subset of $\Phi(\Lambda)$ and $X$
is large if and only if $\tau(X) = \emptyset$.

\begin{proposition}\label{proposition:dim of emb to duWhL is |W|}
Let $G$ be a split group and assume $G = G_{\max}$. 
Suppose $\Lambda \in \overline{C^{+}}$. 
If $X \in \mathcal{H}_{G}[\Lambda]$ is an irreducible large
module, then 
\begin{equation}
\dim \Hom_{\gK}(X, \duWhL) = |W|. 
\end{equation}
\end{proposition}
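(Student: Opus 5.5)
The plan is to reduce first to a regular infinitesimal character, and then to obtain $|W|$ from two matching inequalities: a lower bound built from Lemma~\ref{lemma:genPS to Pi0-2} and an upper bound from Lemma~\ref{lemma:sphePS to Pi0}.

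\emph{Reduction to $\Lambda \in C^{+}$.} By Remark~\ref{remark:may assume dominant} we may take $\Lambda \in \overline{C^{+}}$. If $\Lambda$ is singular, choose $\mu \in \mathcal{L}$ with $\Lambda+\mu \in C^{+}$. By Theorem~6.18 of \cite{Speh-Vogan}, choose an irreducible $X'$ with $\psi_{\Lambda+\mu}^{\Lambda}(X') = X$; it is large because $X$ is. Corollary~\ref{corollary:hom, translation from wall} then gives
\[
\Hom_{\gK}(X,\duWhL) \simeq \Hom_{\gK}(X', \duWh{\Lambda+\mu}{\eta}),
\]
exactly as in Proposition~\ref{proposition:sub of duWhL}. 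So it suffices to treat $\Lambda \in C^{+}$. Conversely, translating from a regular $\Lambda$ to a singular one does not change the dimension, as long as $X$ stays large (Proposition~\ref{proposition:sub of duWhL}(2)). I will use this freedom in both directions.

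\emph{Upper bound.} Fix $\Lambda \in C^{+}$ and $X$ large. Let $\Lambda_{0} \in \overline{C^{+}}$, with $\Lambda_{0}-\Lambda \in \mathcal{L}$, be maximally singular, so that every root of $R(\Lambda)$ vanishes on $\Lambda_{0}$. Then $\psi_{\Lambda}^{\Lambda_{0}}X$ is nonzero, irreducible and large. The hom space is unchanged by the translation, by Corollary~\ref{corollary:hom, translation from wall}. At $\Lambda_{0}$, I expect that for $G = G_{\max}$ the unique large irreducible module in the relevant block is a quotient of the spherical principal series $\Ind_{MAN}^{G}(1_{M} \otimes e^{\Lambda_{0}+\rho} \otimes 1_{N})_{K}$, up to twisting by a character of the component group, which is harmless. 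Since $\Hom_{\gK}(\,\cdot\,, \duWh{\Lambda_{0}}{\eta})$ is left exact contravariant, Lemma~\ref{lemma:sphePS to Pi0} then gives $\dim \le |W|$. The cyclicity argument in that lemma should apply verbatim to any character twist. Identifying the large module at $\Lambda_{0}$ with such a quotient is where $G=G_{\max}$ enters: by \eqref{equation:socles, in case G=Gmax} there is a single $G^{+}$-orbit of $\eta$, and each large module appears in $\soc \stWh{\eta}{\Lambda_0}{}$ exactly once.

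\emph{Lower bound.} Return to $\Lambda \in C^{+}$ and dualize with Proposition~\ref{proposition:realization of dual Whittaker gK module}(1):
\[
\Hom_{\gK}(X,\duWhL) \simeq \Hom_{\gK}(\stWh{-\eta}{-\Lambda}{\circ}, X^{c}).
\]
For $G=G_{\max}$, \eqref{equation:socles, in case G=Gmax} and Corollary~\ref{corollary:WhL as injective module} give
\[
\stWh{-\eta}{-\Lambda}{\circ} \simeq \bigl(\stWh{-\eta}{-\Lambda}{}\bigr)^{\oplus |W|},
\]
so it suffices to show $\Hom_{\gK}(\stWh{-\eta}{-\Lambda}{}, X^{c}) \neq 0$. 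Choose a principal series $\Ind_{MAN}^{G}(\sigma \otimes e^{\Lambda+\rho}\otimes 1_{N})_{K}$ whose Langlands quotient is $X$. For $X$ large such a standard module exists: since $\Lambda \in C^{+}$ and $G$ is split, large modules have Langlands data on the minimal parabolic. Its contragredient then has $X^{c}$ as unique irreducible submodule. As in the proof of Lemma~\ref{lemma:genPS to Pi0-2}, the map $\beta \circ J_{\sigma^{c},\Lambda} = A_{w_{0}}(\sigma^{c},\Lambda)$ has image equal to that Langlands submodule $X^{c}$. Hence $\beta$ restricted to $\stWh{\eta'}{-\Lambda}{}$ gives a nonzero map onto $X^{c}$, and for $G_{\max}$ we may take $\eta'=-\eta$. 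This yields $\dim \ge |W|$.

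The step I expect to be the main obstacle is the upper bound, namely identifying $\psi_{\Lambda}^{\Lambda_{0}}X$ with a quotient of a (twisted) spherical principal series at the maximally singular parameter. If that fails, I would instead bound from above using Lemma~\ref{lemma:genPS to Pi0-2}, whose proof shows $\dim \le$ the multiplicity $|W|$ of the Langlands submodule in $\WhL$. To do so, I would realize $X^{c}$ as the unique irreducible submodule of $\Ind_{MAN}^{G}(\sigma^{c}\otimes e^{-\Lambda+\rho}\otimes 1_{N})_{K}$, and bound $\dim \Hom(\stWh{-\eta}{-\Lambda}{\circ},X^{c})$ by the multiplicity of $X^{c}$ in $\stWh{-\eta}{-\Lambda}{\circ}$, which is $|W|$ by Corollary~\ref{corollary:multi of E(Xj) in WhL}.
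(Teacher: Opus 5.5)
Your reduction to $\Lambda\in C^{+}$ is correct. So is your observation that translating to a maximally singular $\Lambda_{0}$ leaves $\Hom_{\gK}(X,\duWhL)$ unchanged. But both of your bounds then rest on a false identification.

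\textbf{Lower bound and fallback upper bound.} Both assume that a large $X$ is the Langlands quotient of some $\Ind_{MAN}^{G}(\sigma\otimes e^{\Lambda+\rho}\otimes 1_{N})_{K}$. For non-generic $\Lambda$ this fails. Such Langlands quotients are generally far from large (for $\sigma=1_{M}$ and $\Lambda=\rho$ it is the trivial representation), and the large constituents sit at the bottom. In Section~\ref{section:examples}, for the $G_{\max}$ groups $SL(3,\R)$ and $G_{2}$, the large module $\overline{X}(\gamma_{0})$ has Langlands data on the fundamental Cartan; for $G_{2}$ it is a discrete series. So $X^{c}$ is not the image of $A_{w_{0}}(\sigma^{c},\Lambda)$. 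Even if it were, $\beta$ would only give a nonzero map into the principal series, not onto $X^{c}$.

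Nor does the count behind Corollary~\ref{corollary:multi of E(Xj) in WhL} apply to $X$. A Langlands quotient of a minimal principal series lies in exactly one such series, but a large module usually lies in several. For example, $\overline{X}(\gamma_{0})$ of $SL(3,\R)$ occurs in $X(\gamma_{3})$, $X(\gamma_{4})$ and $X(\gamma_{5})$, so its multiplicity in $\WhL$ is $3|W|$. The bound your fallback would produce is therefore $3|W|$, not $|W|$.

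\textbf{Spherical upper bound.} This fails for a related reason. Translation to $\Lambda_{0}$ is injective on the irreducibles it does not kill. The spherical series at $\Lambda_{0}$ is cyclic on a $K$-type of multiplicity one, so it has a unique irreducible quotient. Hence at most one large module of $\mathcal{H}_{G}[\Lambda]$, up to twists by characters of $G/G_{0}$, can translate to a quotient of it. But $SL(3,\R)$ is connected and has two large modules, $\overline{X}(\gamma_{0})$ and $\overline{X}(\gamma_{0'})$. For a non-spherical minimal $K$-type $\delta$, the argument of Lemma~\ref{lemma:sphePS to Pi0} only gives $\dim\delta\cdot|W|$. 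Note that Lemma~\ref{lemma:translation of duWhL from wall} needs only \emph{one} $\sigma$ whose series translates to the spherical one, which is why it gets away with the spherical lemma.

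\textbf{The missing idea.} You never need to locate $X$ inside a principal series. Take any $I=\Ind_{MAN}^{G}(\sigma\otimes e^{\Lambda+\rho}\otimes 1_{N})_{K}$ having $X$ as a composition factor, which exists by the subquotient theorem.
\begin{enumerate}
\item At $\Lambda_{0}$ every irreducible module is large, since it is the translate of an irreducible whose $\tau$-invariant avoids $\Phi(\Lambda)$.
\item For $G=G_{\max}$ a principal series has a unique large constituent, of multiplicity one \cite[Corollary~6.7]{Vogan GK-dim}.
\item Hence $\psi_{\Lambda}^{\Lambda_{0}}I\simeq\psi_{\Lambda}^{\Lambda_{0}}X$.
\end{enumerate}
Applying Corollary~\ref{corollary:hom, translation from wall} to both $X$ and $I$ gives
\[
\Hom_{\gK}(X,\duWhL)
\simeq\Hom_{\gK}(\psi_{\Lambda}^{\Lambda_{0}}X,\duWh{\Lambda_{0}}{\eta})
=\Hom_{\gK}(\psi_{\Lambda}^{\Lambda_{0}}I,\duWh{\Lambda_{0}}{\eta})
\simeq\Hom_{\gK}(I,\duWhL).
\]
The last space has dimension $|W|$ by Lemma~\ref{lemma:genPS to Pi0-2}. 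That lemma concerns the whole principal series, not its Langlands quotient, so both inequalities come at once. This is the paper's argument.
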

\begin{proof}
Because of Proposition~\ref{proposition:sub of duWhL} (2), 
it suffices to show this proposition when $\Lambda$ is contained in 
$C^{+}$, so we assume it. 

If the integral root system $R(\Lambda)$ is empty, 
then any principal series modules are irreducible 
(\cite[Theorem~1.1]{Speh-Vogan})
and $X$ is a principal series module. 
In this case, this proposition is nothing but 
Lemma~\ref{lemma:genPS to Pi0-2}. 

Consider the case when $R(\Lambda)$ is not empty. 
By the Harish-Chandra's subquotient theorem, 
there exists $\sigma \in \widehat{M}$ such that 
$X$ is a composition factor of 
$\Ind_{MAN}^{G}(\sigma \otimes e^{\Lambda+\rho} \otimes 1_{N})_{K}$. 
Choose an infinitesimal character 
$\Lambda+\mu$ from $(\Lambda+\mathcal{L}) \cap \overline{C^{+}}$ 
so that $\Lambda+\mu$ is
most singular in it, that is, 
$\langle \alpha, \Lambda+\mu \rangle = 0$ 
for any $\alpha \in \Phi(\Lambda)$. 
We know that 
\[U:=
\psi_{\Lambda}^{\Lambda+\mu}
(\Ind_{MAN}^{G}(\sigma \otimes e^{\Lambda+\rho} \otimes 1_{N})_{K})
\]
is again a principal series since $\psi_{\Lambda}^{\Lambda+\mu}$ is a
translation functor from non-singular to singular. 

By Theorem~7.229 of \cite{KV}, $\psi_{\Lambda}^{\Lambda+\mu}(X)$ is
irreducible, and it is large. 
But by the choice of $\Lambda+\mu$, 
the composition factors of the principal series module 
$U$ are all large. 
On the other had, since $G = G_{\max}$, 
every principal series module 
has a unique large composition factor and the multiplicity 
of it in this principal series is one 
(\cite[Corollary~6.7]{Vogan GK-dim}). 
Therefore, 
$U$ consists of just one irreducible large module 
$\psi_{\Lambda}^{\Lambda+\mu}(X)$. 
Therefore, 
\begin{align*}
\Hom_{\gK}&(X, \duWhL) 
\\
&\simeq 
\Hom_{\gK}
(X, \psi_{\Lambda+\mu}^{\Lambda}(\duWh{\Lambda+\mu}{\eta}) 
\quad 
\mbox{by Lemma~\ref{lemma:translation of duWhL from wall}}
\\
& \simeq 
\Hom_{\gK}
(\psi_{\Lambda}^{\Lambda+\mu}(X), \duWh{\Lambda+\mu}{\eta}) 
\\
& =
\Hom_{\gK}
(U, 
\duWh{\Lambda+\mu}{\eta})
\\
& =
\Hom_{\gK}
(\psi_{\Lambda}^{\Lambda+\mu}
(\Ind_{MAN}^{G}(\sigma \otimes e^{\Lambda+\rho} \otimes 1_{N})_{K}), 
\duWh{\Lambda+\mu}{\eta})
\\
& \simeq 
\Hom_{\gK}(
\Ind_{MAN}^{G}(\sigma \otimes e^{\Lambda+\rho} \otimes 1_{N})_{K}, 
\duWhL) 
\end{align*}
by Corollary~\ref{corollary:hom, translation from wall}. 
The dimension of the last space is $|W|$ 
by Lemma~\ref{lemma:genPS to Pi0-2}. 
\end{proof}

Assume $G = G_{\max}$. 
Suppose  $X \in \mathcal{H}_{G}[\Lambda]$ is irreducible large. 
By the subquotient theorem, $X$ is a composition factor of 
a principal series module, say $U'$. 
Since $G = G_{\max}$, $X$ is the unique large composition factor of
$U'$ by \cite[Corollary~6.7]{Vogan GK-dim}. 
On the other hand, the dimension of the space of
intertwining operators from $U'$ to $\WhL$ is
$|W|$ by Theorem~I of \cite{Kos}. 
Therefore, the constant $c_{\dim N}(X)$ 
in Corollary~\ref{corollary:WhL as injective module} (1) is $|W|$, 
that is 
\begin{equation}
\label{eq:Cor2.8(1) again}
\WhL 
\simeq 
\bigoplus_{X \in \mathcal{H}_{G}[\Lambda]_{\mathrm{irr}}, 
\Dim(X) = \dim \lie{n}} 
E(X)^{\oplus |W|}, 
\end{equation}
so we have 
\begin{equation}\label{equation:decomposition of the dual-1}
\duWh{-\Lambda}{-\eta} 
\simeq 
(\WhL)^{c} 
\simeq 
\bigoplus_{X \in \mathcal{H}_{G}[\Lambda]_{\mathrm{irr}}, 
\Dim(X) = \dim \lie{n}} 
(E(X)^{c})^{\oplus |W|}
\end{equation}
by 
Proposition
~\ref{proposition:realization of dual Whittaker gK module}(1). 
On the other hand, 
\begin{equation}\label{equation:decomposition of the dual-2}
\soc \duWh{-\Lambda}{-\eta} 
\simeq 
\bigoplus_{X' \in \mathcal{H}_{G}[-\Lambda]_{\mathrm{irr}}, 
\Dim(X') = \dim \lie{n}} 
(X')^{\oplus |W|}
\end{equation}
by Propositions~\ref{proposition:sub of duWhL}(1) and 
\ref{proposition:dim of emb to duWhL is |W|}. 
By the map 
$\mathcal{H}_{G}[\Lambda] 
\ni 
V \mapsto V^{c} 
\in 
\mathcal{H}_{G}[-\Lambda]$ 
of taking contragredient module, 
the number of the irreducible large modules in 
$\mathcal{H}_{G}[\Lambda]$ and that in 
$\mathcal{H}_{G}[-\Lambda]$ are the same. 
By counting the number of irreducible factors in 
the socle of \eqref{equation:decomposition of the dual-1} and 
\eqref{equation:decomposition of the dual-2}, 
we know that the socle of every $E(X)^{c}$ in the right hand side of 
\eqref{equation:decomposition of the dual-1} consists of 
a unique irreducible large module. 

Let $X''$ be the socle of $E(X)^{c}$. 
By the injectivity of $E(X'')$, there exists a $\brgK$-homomorphism
$\varphi_{E(X)} : E(X)^{c} \rightarrow E(X'')$ such that 
\begin{equation}
\label{eq:injectivity discussion on X''}
\begin{xy}
(0,0)*{0}="A0", 
(15,0)*{X''}="A1",
(30,0)*{E(X)^{c}}="A2", 
(22.5,-12)*{E(X'')}="B1", 
\ar "A0";"A1"
\ar "A1";"A2"
\ar "A1";"B1"
\ar "A2";"B1"^{\ \varphi_{E(X)}}
\end{xy}
\end{equation}
is commutative. 
This homomorphism is injective since $X''$ is the unique irreducible
submodule of both $E(X)^{c}$ and $E(X'')$. 
We get an injective $\brgK$-homomorphism 
\[
\bigoplus_{X \in \mathcal{H}_{G}[\Lambda]_{\mathrm{irr}}, 
\Dim(X) = \dim \lie{n}} 
(E(X)^{c})^{\oplus |W|}
\rightarrow 
\bigoplus_{X'' \in \mathcal{H}_{G}[-\Lambda]_{\mathrm{irr}}, 
\Dim(X'') = \dim \lie{n}} 
E(X'')^{\oplus |W|}.
\]
Both hand sides of this are equivalent as $K$-representations 
by Corollary~\ref{corollary:character of WhL}, 
so this map is an isomorphism. 
Thus the map $\varphi_{E(X)}$ 
in \eqref{eq:injectivity discussion on X''} is isomorphism: 
\begin{equation}\label{equation:duality for G=Gmax}
\varphi_{E(X)} : E(X)^{c} \simeq E(X'') 
\end{equation}
when $G = G_{\max}$. 

Finally, consider the case when $G$ does not satisfy $G=G_{\max}$. 
We use the group $G^{+}$ defined 
in \S~\ref{section:global characters}. 
Let $X$ be an irreducible $(\lie{g}, K^{+})$-module and 
\begin{equation}
\label{eq:res of X to gK}
\res_{\lie{g}, K^{+}}^{\gK}(X) 
\simeq 
\bigoplus_{i=1}^{l'} X_{i} 
\end{equation}
be its irreducible decomposition as an $\brgK$-module. 
For $G^{\ast} = G$ or $G^{+}$, 
we denote the injective envelope of 
$Y \in \mathcal{H}_{G^{\ast}}[\Lambda]^{(1)}$ by 
$E_{G^{\ast}}(Y)$. 
By the injectivity, 
$\Hom_{\lie{g}, K^{+}}(\, \ast\, , E_{G^{+}}(X))$ is an exact
functor on $\mathcal{H}_{G^{+}}[\Lambda]^{(1)}$. 
Since 
\[
\Hom_{\gK}(\, \ast\, , \res_{\lie{g}, K^{+}}^{\gK}(E_{G^{+}}(X)))
\simeq 
\Hom_{\lie{g}, K^{+}}
(\mathrm{induced}_{\gK}^{\lie{g}, K^{+}}(\, \ast\, ), 
E_{G^{+}}(X))
\]
and 
$\mathrm{induced}_{\gK}^{\lie{g}, K^{+}}$ is an exact functor from 
$\mathcal{H}_{G}[\Lambda]$ to $\mathcal{H}_{G^{+}}[\Lambda]$ which 
preserves the infinitesimal character 
(see Propostion~2.77 and (2.74c) of \cite{KV}), 
$\res_{\lie{g}, K^{+}}^{\gK}(E(X))$ is an injective module 
in the category $\mathcal{H}_{G}[\Lambda]^{(1)}$. 
Therefore by Proposition~\ref{proposition:decomp of injective}(2),
\begin{equation}
\label{eq:res of E(X) to gK} 
\res_{\lie{g}, K^{+}}^{\gK}(E_{G^{+}}(X)) 
\simeq 
\bigoplus_{i=1}^{l'} E_{G}(X_{i}).  
\end{equation}
Since $E_{G}(X_{i})$ is indecomposable, 
\eqref{equation:duality for G=Gmax} and 
\eqref{eq:res of E(X) to gK} 
imply that, even if $G$ does not satisfy $G=G_{\max}$, 
for every $X \in \mathcal{H}_{G}[\Lambda]$, 
there exists a unique irreducible large module 
$X'' \in \mathcal{H}_{G}[-\Lambda]$ 
such that 
$E_{G}(X)^{c}$ is isomorphic to $E_{G}(X'')$. 

Note that the correspondnce $X \mapsto X''$ is injective. 
In fact, if $E(X'') \simeq E(Y_{i})^{c}$ ($i=1,2$) for 
$Y_{1}, Y_{2} \in \mathcal{H}_{G}[\Lambda]$, 
then 
$E(Y_{1})^{c} \simeq E(Y_{2})^{c}$ so 
$E(Y_{1}) \simeq E(Y_{2})$. 
The socles of both sides are $Y_{1}$ and $Y_{2}$. 

Moreover, 
as is stated below \eqref{equation:decomposition of the dual-2}, 
the map $\mathcal{H}_{G}[-\Lambda] 
\ni X'' \mapsto (X'')^{c} 
\in \mathcal{H}_{G}[\Lambda]$ of taking contragredient is a one to
one correspondence between the set of large irreducible modules in 
$\mathcal{H}_{G}[-\Lambda]$ and that in $\mathcal{H}_{G}[\Lambda]$. 
Put $\widetilde{X} := (X'')^{c}$. 
Then the correspondence 
$\mathcal{H}_{G}[\Lambda] 
\ni X \mapsto \widetilde{X} \in 
\mathcal{H}_{G}[\Lambda]$ given by 
$\varphi_{E(X)} : 
E(X)^{c} \simeq E(\widetilde{X}^{c})$ is a permutation of
the set of large irreducible modules in $\mathcal{H}_{G}[\Lambda]$.

\begin{theorem}\label{theorem:1st duality theorem}
Suppose $G$ is a split group. 
If $X \in \mathcal{H}_{G}[\Lambda]^{(1)}$ is an irreducible large module, 
then its injective envelope $E(X)$ in $\mathcal{H}_{G}[\Lambda]^{(1)}$ 
has a unique irreducible quotient module $\widetilde{X}$, which is large. 
It satisfies 
\[
E(X)^{c} \simeq E(\widetilde{X}^{c}) 
\]
and the correspondence $X \mapsto \widetilde{X}$ is a permutation of
the set of large irreducible modules in
$\mathcal{H}_{G}[\Lambda]^{(1)}$. 
Moreover, if $G = G_{\max}$, then 
$\widetilde{X} \simeq X$. 
\end{theorem}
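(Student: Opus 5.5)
Most of the theorem is already contained in the discussion just before it, so the proof will assemble those pieces and add the one genuinely new statement, namely $\widetilde{X}\simeq X$ when $G=G_{\max}$.

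\textbf{Step 1 (the isomorphism and the permutation).} For a large irreducible $X$ I take $X''\in\mathcal{H}_{G}[-\Lambda]$ from \eqref{equation:duality for G=Gmax} when $G=G_{\max}$. For general split $G$ I take it from the restriction argument \eqref{eq:res of E(X) to gK}. This gives $E(X)^{c}\simeq E(X'')$. Setting $\widetilde{X}:=(X'')^{c}$, the injectivity of $X\mapsto X''$ shown above and the contragredient bijection between large irreducibles of $\mathcal{H}_{G}[-\Lambda]$ and of $\mathcal{H}_{G}[\Lambda]$ show that $X\mapsto\widetilde{X}$ is a permutation.

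\textbf{Step 2 (unique irreducible quotient).} Since $E(X'')$ has simple socle $X''$, dualizing shows that $E(X)\simeq (E(X)^{c})^{c}\simeq E(X'')^{c}$ has a unique maximal submodule. Its quotient is $(X'')^{c}=\widetilde{X}$, because taking $K$-finite contragredients is an exact involution on Harish-Chandra modules that exchanges submodules and quotients. The module $\widetilde{X}$ is large, since $X''$ is large and Gelfand--Kirillov dimension is preserved under contragredients. Since $E(X)^{c}$ is injective in $\mathcal{H}_{G}[-\Lambda]^{(1)}$, the duality also shows that $E(X)$ is projective in $\mathcal{H}_{G}[\Lambda]^{(1)}$ with top $\widetilde{X}$, so $E(X)=P(\widetilde{X})$.

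\textbf{Step 3 ($G=G_{\max}$ implies $\widetilde{X}\simeq X$).} This is the main obstacle. I would first try a composition-factor count. By Lemma~\ref{lemma:multi of Langlands quotients of PS} and Corollary~\ref{corollary:multi of E(Xj) in WhL}, in the $G_{\max}$ case each $c_{\dim\lie{n}}=|W|$. Hence each Langlands quotient $V$ of a minimal principal series with parameter $\Lambda\in C^{+}$ lies in exactly one $E(X_{j})$, and with multiplicity one. For $G_{\max}$ the unique large factor $X$ of a principal series $U'$ has multiplicity one in $U'$ (\cite[Corollary~6.7]{Vogan GK-dim}). Combined with $\ch\WhLmg=\ch\Ind_{AN}^{G}(e^{\Lambda+\rho}\otimes1_{N})_{K}$ (Theorem~\ref{theorem:character of WhLmg}), this means each large $X$ appears in $\WhLmg$ with multiplicity exactly the number of minimal principal series containing it. I would then argue that the number of large factors in $E(X)$ forces $[E(X):Y]\neq0$ for a large $Y\neq X$ only in a way incompatible with a permutation having $\widetilde{X}\neq X$.

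If this count is not conclusive, the fallback is to reduce to singular $\Lambda$ via translation. For $\Lambda\in\overline{C^{+}}$ most singular (all integral simple roots on walls), the argument in Proposition~\ref{proposition:dim of emb to duWhL is |W|} shows that each relevant principal series $U$ is irreducible, equal to its large factor, and injective-like. There $E(X)=X=U$, so $\widetilde{X}=X$ holds trivially.

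I would transfer this to regular $\Lambda$ through Corollary~\ref{corollary:hom, translation from wall} and Lemma~\ref{lemma:translation of duWhL to wall}. Translation to the wall sends $E(X)$ to an injective module by Theorem~\ref{theorem:behavior under the translation functors}(1). It also commutes with contragredients, after replacing $\Lambda$ by $-\Lambda$. I would compare top and socle after translating to the wall and back, using that $\psi$ sends large irreducibles to nonzero irreducibles bijectively within a coherent family. The delicate point is to check that translating back does not mix distinct large $X$. Proposition~\ref{proposition:sub of duWhL}(2) provides the needed constancy of socle multiplicities along the coherent family.
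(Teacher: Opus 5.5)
Your Steps~1 and~2 are fine. They reassemble the discussion preceding the theorem: the isomorphisms $\varphi_{E(X)}$, the restriction argument for $G\neq G_{\max}$, injectivity of $X\mapsto X''$, and dualizing to get the unique top and $E(X)=P(\widetilde X)$. The gap is in Step~3, which is the only genuinely new assertion, and neither of your routes proves it.

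\textbf{The counting route.} It stops at ``I would then argue'', and its inputs cannot locate $\widetilde X$. Lemma~\ref{lemma:multi of Langlands quotients of PS} and Corollary~\ref{corollary:multi of E(Xj) in WhL} only concern Langlands quotients of minimal principal series, which for integral $\Lambda$ are typically not large. Composition multiplicities also do not distinguish the socle of $E(X)$ from its top. Compare the $Sp(2,\R)$ example in Section~\ref{section:examples}: there $E(\overline X(\gamma_0))$ contains the second large module $\overline X(\gamma_1)$, and that module is the top. So some input must rule this out for $G_{\max}$.

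\textbf{The fallback route.} It rests on a false claim: at a most singular $\Lambda'$ one does not have $E(X)=X$. Indeed $\ch\stWh{\eta}{\Lambda'}{}=\sum_{\sigma\in\widehat{M}}\ch\Ind(\sigma\otimes e^{\Lambda'+\rho}\otimes 1_N)_K$, and the principal series for $\sigma$ and $w\sigma$ have the same character when $w\Lambda'=\Lambda'$. On the other hand, the socle is multiplicity free by Theorem~\ref{theorem:irred sub of WhLmg}(2). Take $G=GL(2,\R)$, which is a $G_{\max}$ group, and $\Lambda'=(\tfrac12,\tfrac12)$, reached by translation from $(\tfrac12,-\tfrac12)$. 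The limit of discrete series occurs twice in $\stWh{\eta}{\Lambda'}{}$ but only once in its socle, so its injective envelope is a nonsplit self-extension. More fundamentally, even at the wall you would still have to show that $E(X')$ has no large composition factor other than $X'$. That is the original problem again, and translating back and forth cannot supply it.

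\textbf{The missing ingredient} is block separation, which is what the paper uses.
\begin{enumerate}
\item By \cite[Corollary~7.3.19]{Vogan Green}, every irreducible other than $V$ in the cone $\mathcal{C}(V)$ over a large irreducible $V$ has nonempty $\tau$-invariant. So each big cone contains exactly one large module.
\item For $G=G_{\max}$, \cite[Proposition~4.4.5]{M2} says the number of blocks equals the number of big cones. Hence each block contains exactly one large irreducible.
\item Since $E(X)$ is indecomposable, all its composition factors lie in the block of $X$.
\item $\widetilde X$ is a large composition factor of $E(X)$, so $\widetilde X\simeq X$.
\end{enumerate}
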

\begin{proof}
We have already shown this theorem except for the last statement. 
To prove the last statement, we recall the notion of cones of
Harish-Chandra modules. 

On the complexified Grothendieck group 
$K(\mathcal{H}_{G}[\Lambda]) \otimes_{\Z} \C$ of 
$\mathcal{H}_{G}[\Lambda]$, 
we can define the coherent continuation representation 
of the integral Weyl group $W(\Lambda)$ (\cite{Vogan Green}). 
Let $V$ be an irreducible module in $\mathcal{H}_{G}[\Lambda]$. 
The $W(\Lambda)$-submodule of 
$K(\mathcal{H}_{G}[\Lambda]) \otimes_{\Z} \C$ 
generated by the irreducible modules appearing in $w \cdot V$ 
($w \in W(\Lambda)$) is called the cone over $V$ 
and we denote it by $\mathcal{C}(V)$. 
If $V$ is large, we call $\mathcal{C}(V)$ a big cone. 
Suppose $V$ is irreducible large. 
Then by Corollary~7.3.19 of \cite{Vogan Green}, 
every irreducible modules appearing in $\mathcal{C}(V)$ other than $V$
has non-empty $\tau$-invariant. 
Since we are assuming $G$ to be split, 
$V$ is the unique irreducible large module appearing in 
$\mathcal{C}(V)$. 
On the other hand, if $G = G_{\max}$, 
it is known that the number of blocks 
(\cite[Definition~9.2.1]{Vogan Green}) is equal to the number of big
cones in $K(\mathcal{H}_{G}[\Lambda]) \otimes_{\Z} \C$ 
(\cite[Proposition~4.4.5]{M2}). 
It follows that there is just one irreducible large $\brgK$-module in
each block. 

Since $E(X)$ is indecomposable and by the definition of blocks, 
every irreducible composition factor of $E(X)$ 
is contained in the same block as $X$. 
We know that the unique irreducible quotient module $\widetilde{X}$ 
of $E(X)$ is large. 
This must be isomorphic to $X$. 
\end{proof}

For a module $Y \in \mathcal{H}_{G}[\Lambda]^{(1)}$, 
we denote its projective cover in $\mathcal{H}_{G}[\Lambda]^{(1)}$ by $P(Y)$. 
\begin{corollary}\label{corollary:proj hull is inj env}
In the setting of Theorem~\ref{theorem:1st duality theorem}, 
\[
E(X) = P(\widetilde{X}), 
\]
and 
$\WhL$, $\WhLmg$ are projective modules in the category 
$\mathcal{H}_{G}[\Lambda]^{(1)}$. 
\end{corollary}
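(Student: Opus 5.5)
We must show $E(X)\simeq P(\widetilde{X})$, and then that $\WhL$ and $\WhLmg$ are projective. The plan is to transport everything through the contragredient functor $V\mapsto V^{c}$.

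This functor is an exact contravariant equivalence between $\mathcal{H}_{G}[\Lambda]^{(1)}$ and $\mathcal{H}_{G}[-\Lambda]^{(1)}$: Harish-Chandra modules are admissible of finite length, so $(V^{c})^{c}\simeq V$. Consequently it exchanges injective and projective objects, and essential monomorphisms with superfluous epimorphisms. In particular it sends an injective envelope $0\to Y\to E(Y)$ in $\mathcal{H}_{G}[-\Lambda]^{(1)}$ to a projective cover $E(Y)^{c}\to Y^{c}\to 0$ in $\mathcal{H}_{G}[\Lambda]^{(1)}$. Hence $P(Y^{c})\simeq E(Y)^{c}$, and projective covers exist for every finite-length object of $\mathcal{H}_{G}[\Lambda]^{(1)}$.

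The key step is to apply this with $Y=\widetilde{X}^{c}\in\mathcal{H}_{G}[-\Lambda]^{(1)}$, giving $P(\widetilde{X})\simeq E(\widetilde{X}^{c})^{c}$. Theorem~\ref{theorem:1st duality theorem} gives $E(X)^{c}\simeq E(\widetilde{X}^{c})$, and taking contragredients once more yields $E(X)\simeq E(X)^{cc}\simeq E(\widetilde{X}^{c})^{c}\simeq P(\widetilde{X})$. As a consistency check, $E(\widetilde{X}^{c})$ has socle $\widetilde{X}^{c}$, so its contragredient has unique irreducible quotient $\widetilde{X}$, which matches the description of $\widetilde{X}$ in the theorem.

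For the last assertion we use Corollary~\ref{corollary:WhL as injective module}: $\WhL$ and $\WhLmg$ are finite direct sums of modules $E(V)$, where $V$ is irreducible with $\Dim(V)=\dim\lie{n}$ (respectively $\eta\in\WF(V)$). Each such $V$ is large; for $\WhLmg$ this holds because $\soc\WhLmg\subset\soc\WhL$ forces $\Dim V=\dim\lie{n}$. Theorem~\ref{theorem:1st duality theorem} therefore applies to each summand, so each $E(V)\simeq P(\widetilde{V})$ is projective. A finite direct sum of projectives is projective, so both modules are projective.

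The only point needing care is justifying that $c$ is an anti-equivalence that preserves the relevant categories: it preserves admissibility, finite length, and the infinitesimal character up to $\Lambda\mapsto-\Lambda$. This is where I would spend a sentence checking that $\mathcal{H}_{G}[\Lambda]^{(1)}$ is sent to $\mathcal{H}_{G}[-\Lambda]^{(1)}$ (with the usual convention $\chi_{-\Lambda}$ for the dual infinitesimal character), and that projectivity of $E(Y)^{c}$ follows from $\Hom(E(Y)^{c},Z)\simeq\Hom(Z^{c},E(Y))$ being exact in $Z$.
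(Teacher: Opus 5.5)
Your proposal is correct and is essentially the argument the paper intends. The paper gives no separate proof: it treats the corollary as an immediate rephrasing (``in other words'') of $E(X)^{c}\simeq E(\widetilde{X}^{c})$ under the contragredient anti-equivalence $\mathcal{H}_{G}[\Lambda]^{(1)}\to\mathcal{H}_{G}[-\Lambda]^{(1)}$, which exchanges injective envelopes and projective covers. Projectivity of $\WhL$ and $\WhLmg$ then follows exactly as you say, from their decomposition in Corollary~\ref{corollary:WhL as injective module} into finitely many envelopes $E(V)$ with $V$ irreducible large.
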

\begin{theorem}\label{theorem:2nd duality theorem}
Suppose $G$ is a split group. 
Then 
\begin{equation}
\label{eq:2nd duality theorem-1}
(\WhL)^{c} 
\simeq 
\Pi_{\lie{g}, \{e\}}^{\gK}(Y_{-\Lambda, -\eta}) 
\simeq 
\stWh{\eta}{-\Lambda}{\circ}. 
\end{equation}
If moreover $G=G_{\max}$, then 
\[
(\WhLmg)^{c} 
\simeq 
\stWh{\eta}{-\Lambda}{}. 
\]
\end{theorem}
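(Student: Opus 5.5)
The first isomorphism in \eqref{eq:2nd duality theorem-1} is Proposition~\ref{proposition:realization of dual Whittaker gK module}(1) with $(\Lambda,\eta)$ replaced by $(-\Lambda,-\eta)$. So the work is to identify $(\WhL)^{c}$ with $\stWh{\eta}{-\Lambda}{\circ}$, and I will do this by comparing decompositions into indecomposable injectives.

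\emph{Step 1.} By Corollary~\ref{corollary:WhL as injective module}(1),
\[
(\WhL)^{c}\simeq\bigoplus_{X}(E(X)^{c})^{\oplus c_{\dim\lie{n}}(X)},
\]
with $X$ running over the irreducible large modules in $\mathcal{H}_{G}[\Lambda]$. Theorem~\ref{theorem:1st duality theorem} gives $E(X)^{c}\simeq E(\widetilde{X}^{c})$. Hence $(\WhL)^{c}$ is injective in $\mathcal{H}_{G}[-\Lambda]^{(1)}$, and its socle is $\bigoplus_{X}(\widetilde{X}^{c})^{\oplus c_{\dim\lie{n}}(X)}$.

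\emph{Step 2.} Corollary~\ref{corollary:WhL as injective module}(1) applied at $-\Lambda$ gives
\[
\stWh{\eta}{-\Lambda}{\circ}\simeq\bigoplus_{Y}E(Y)^{\oplus c_{\dim\lie{n}}(Y)},
\]
with $Y$ running over the irreducible large modules in $\mathcal{H}_{G}[-\Lambda]$. By Proposition~\ref{proposition:decomp of injective} it suffices to show, for every large irreducible $X$,
\[
c_{\dim\lie{n}}(X)=c_{\dim\lie{n}}(\widetilde{X}^{c}).
\]
The Bernstein degree is defined through $K$-finite associated graded data, so $c_{\dim\lie{n}}(V^{c})=c_{\dim\lie{n}}(V)$. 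The required equality therefore reduces to $c_{\dim\lie{n}}(X)=c_{\dim\lie{n}}(\widetilde{X})$. Note that the character $\eta$ does not enter the left-hand side of \eqref{eq:2nd duality theorem-1} after dualizing, because $\stWh{\eta}{-\Lambda}{\circ}$ and $\stWh{-\eta}{-\Lambda}{\circ}$ have the same socle data by Theorem~\ref{theorem:irred sub of WhL}.

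\emph{Step 3 (main obstacle).} I must show that the permutation $X\mapsto\widetilde{X}$ preserves Bernstein degrees. I would restrict from $G^{+}$ as in \eqref{eq:res of E(X) to gK}. For $G=G_{\max}$ we have $\widetilde{X}=X$, so there is nothing to prove. For general $G$, restrict a $(\lie{g},K^{+})$-irreducible $X^{+}$ as in \eqref{eq:res of X to gK}. The summands $X_{i}$ are conjugate under $K^{+}$ (via the representatives $y_{i}\in M^{+}$), so they share a Bernstein degree. The isomorphism $E_{G^{+}}(X^{+})^{c}\simeq E_{G^{+}}(X^{+c})$ restricts to $\bigoplus_{i} E(X_{i})^{c}\simeq\bigoplus_{i} E(X_{i}^{c})$. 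By Krull--Schmidt, $\widetilde{X_{i}}$ is then some $X_{j}$ in the same $K^{+}$-orbit, and so has the same degree. This gives $(\WhL)^{c}\simeq\stWh{\eta}{-\Lambda}{\circ}$.

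\emph{Step 4.} Assume $G=G_{\max}$. By Corollary~\ref{corollary:WhL as injective module}(3), $\WhLmg\simeq\bigoplus E(X)$, where $X$ runs over the large irreducibles with $\eta\in\WF(X)$, each appearing once. Dualizing and using $\widetilde{X}=X$ gives $(\WhLmg)^{c}\simeq\bigoplus E(X^{c})$. It remains to check that $\eta\in\WF(X)$ if and only if $\eta\in\WF(X^{c})$. Since $G=G_{\max}$ has a single principal nilpotent orbit, every large module contains $\eta$ in its wave front set, and $X^{c}$ is large. So both sides are the sum of $E(Y)$ over all large $Y$ in $\mathcal{H}_{G}[-\Lambda]$, which is $\stWh{\eta}{-\Lambda}{}$.
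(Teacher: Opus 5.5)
Your argument is sound in outline, but it takes a different route from the paper, and its key reduction rests on a fact you assert rather than prove. (A minor slip: the first isomorphism in \eqref{eq:2nd duality theorem-1} is Proposition~\ref{proposition:realization of dual Whittaker gK module}(1) verbatim, so no substitution $(\Lambda,\eta)\mapsto(-\Lambda,-\eta)$ is involved.)

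The paper never compares $c_{\dim\lie{n}}(X)$ with $c_{\dim\lie{n}}(\widetilde{X}^{c})$. For $G=G_{\max}$ it has already shown, just before Theorem~\ref{theorem:1st duality theorem}, that every large irreducible module has Bernstein degree $|W|$; this is \eqref{eq:Cor2.8(1) again}, obtained from Kostant's Theorem~I and the uniqueness of the large factor of a principal series. Dualizing and using the bijection $X\mapsto X''$ then gives $\stWh{\eta}{-\Lambda}{\circ}$ at once, without needing $\widetilde{X}=X$. For general $G$ the paper works with the whole module rather than with socles. It uses $\res_{\lie{g},K^{+}}^{\gK}\stWh{\eta}{\Lambda}{G^{+},\circ}\simeq\bigoplus_{i}\stWh{y_{i}\cdot\eta}{\Lambda}{G,\circ}\simeq(\stWh{\eta}{\Lambda}{G,\circ})^{\oplus l}$, where the last step holds because Corollary~\ref{corollary:WhL as injective module}(1) is independent of $\eta$. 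The $G_{\max}$ isomorphism plus Krull--Schmidt cancellation of the $l$ copies then finishes. Your route instead uses the full permutation statement of Theorem~\ref{theorem:1st duality theorem} and needs $X\mapsto\widetilde{X}^{c}$ to preserve Bernstein degrees. It is more work, though it does yield that extra information.

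The weak step is $c_{\dim\lie{n}}(V^{c})=c_{\dim\lie{n}}(V)$. The phrase ``defined through $K$-finite associated graded data'' does not prove it. A good filtration on $V$ induces no good filtration on $V^{c}$, so there is no direct comparison of Hilbert polynomials. The equality is true, but it needs an argument or a reference, and in the present setting you can avoid it.

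\begin{itemize}
\item For $G_{\max}$, both $c_{\dim\lie{n}}(X)$ and $c_{\dim\lie{n}}(X^{c})$ equal $|W|$ by \eqref{eq:Cor2.8(1) again}, applied at $\Lambda$ and at $-\Lambda$.
\item For general $G$, your own conjugacy argument gives $c_{\dim\lie{n}}(X_{i})=|W|/l'=c_{\dim\lie{n}}(X_{i}^{c})$ directly. This is because $\res X^{+}$ and $\res (X^{+})^{c}$ each consist of $l'$ mutually conjugate constituents of total degree $|W|$.
\end{itemize}

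Be aware that this conjugacy is Clifford theory, which needs $K$ to be normal in $K^{+}$ (for instance $G$ normal in $G^{+}$). The paper never assumes this, and its whole-module restriction sidesteps the issue.
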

\begin{proof}
First, suppose $G=G_{\max}$. 
We use the notation used 
in Theorem~\ref{theorem:1st duality theorem} and the discussion above
it. 
By \eqref{eq:Cor2.8(1) again} 
and \eqref{equation:decomposition of the dual-1}, 
\begin{align*}
\duWh{-\eta}{-\Lambda}
\simeq 
(\WhL)^{c} 
&
\underset{\eqref{equation:decomposition of the dual-1}}{\simeq} 
\bigoplus_{X \in \mathcal{H}_{G}[\Lambda]_{\mathrm{irr}}, 
\Dim(X) = \dim \lie{n}} 
(E(X)^{c})^{\oplus |W|}
\\
&\simeq 
\bigoplus_{X'' \in \mathcal{H}_{G}[-\Lambda]_{\mathrm{irr}}, 
\Dim(X'') = \dim \lie{n}} 
E(X'')^{\oplus |W|}
\\
&
\underset{\eqref{eq:Cor2.8(1) again}}{\simeq} 
\stWh{\eta}{-\Lambda}{\circ}. 
\end{align*}

Next, since $G=G_{\max}$, 
the number of principal nilpotent $G$-orbits in $\lier{g}$ is one. 
See \cite[p205]{M2}. There, the condition $G=G_{\max}$ is called 
$G$ is of type II. 
It follows that, if $\eta$ is non-degenerate and $X$ is large, 
the condition $\eta \in \WF(X)$ in 
Corollary~\ref{corollary:WhL as injective module}(3) 
is automatically satisfied. 
Therefore, by Corollary~\ref{corollary:WhL as injective module} and 
Theorem~\ref{theorem:1st duality theorem}, 
\begin{align*}
(\WhLmg)^{c} 
& \simeq 
\bigoplus_{X \in \mathcal{H}_{G}[\Lambda]_{\mathrm{irr}}, 
\mathrm{large}}
E(X)^{c}
\simeq 
\bigoplus_{X'' \in \mathcal{H}_{G}[-\Lambda]_{\mathrm{irr}}, 
\mathrm{large}}
E(X'')
\simeq 
\stWh{\eta}{-\Lambda}{}. 
\end{align*}

Finally, let us consider the case when $G=G_{\max}$ is not satisfied. 
Recall the proof of Theorem~\ref{theorem:character of WhLmg}. 
Just in the same way 
as \eqref{equation:decomposition of WhL for Gmax to G}, 
we have 
\[
\res_{\lie{g}, K^{+}}^{\gK} 
\stWh{\eta}{\Lambda}{G^{+}, \circ}
\simeq 
\bigoplus_{i=1}^{l} 
\stWh{y_{i} \cdot \eta}{\Lambda}{G, \circ}, 
\qquad 
G\backslash G^{+} \simeq \{y_{1}, y_{2}, \dots, y_{l}\} 
\subset M^{+}.  
\]
Since Corollary~\ref{corollary:WhL as injective module}(1) 
does not depend on the choice of the non-degenerate unitary chracter
$\eta$, 
$\res_{\lie{g}, K^{+}}^{\gK} 
\stWh{\eta}{\Lambda}{G^{+}, \circ}
\simeq 
(\stWh{\eta}{\Lambda}{G, \circ})^{\oplus l}$. 
Thus the isomorphism \eqref{eq:2nd duality theorem-1} for $G$ 
follows from that for the $G=G_{\max}$ case. 
\end{proof}
\begin{remark}
If $G=G_{\max}$ does not hold, 
$\WhLmg$ {\it does} depend on the choice of $\eta$. 
The problem whether 
$(\WhLmg)^{c}
\simeq 
\stWh{\eta}{-\Lambda}{}$ holds or not is open. 
We will see that this holds for $G=Sp(n,\R)$ 
in Remark~\ref{remark:self-duality in Sp(n,R) case}. 
\end{remark}


\section{Examples}
\label{section:examples}

The composition factors of standard modules are known from 
the Kazhdan-Lusztig-Vogan conjecture. 
By using the results in this paper and some other methods, 
we can determine the socle filtration of the injective envelope $E(X)$
of an irreducible large $\brgK$-module $X$ 
if $G$ is a small split group. 
In this section, we present it 
in the case when $G$ is a real rank two connected split linear group 
and the infinitesimal character $\Lambda$ is non-singular integral.

Before going ahead, we introduce some notation used in this
section. 

We use the Langlands classification to identify irreducible
$\brgK$-modules. 
Let $\gamma$ be a regular character of a Cartan subgroup $H$ of $G$ 
with non-singular infinitesimal character 
(cf. \cite[\S~6.6]{Vogan Green}). 
The corresponding infinitesimal character is denoted by
$\overline{\gamma}$ and 
the (integral) length of $\gamma$ (\cite[Definition~8.1.4]{Vogan Green}) is 
denoted by $l^{I}(\gamma)$. 
We denote the standard $\brgK$-module with parameter $\gamma$ by
$X(\gamma)$, 
and its Langlands subquotient module by $\overline{X}(\gamma)$. 
When we realize the standard module $X(\gamma)$ as a generalized
principal series 
$\Ind_{P'}^{G}(\sigma \otimes e^{\nu+\rho_{P'}} \otimes 1)_{K}$ 
induced from a discrete series $\sigma$ and the character $e^{\nu}$ of
the vector part, 
we set $\nu$ to be positive with respect to the parabolic 
subgroup $P'$. 
So the Langlands subquotient $\overline{X}(\gamma)$ is the unique
irreducible quotient module of $X(\gamma)$. 
We call such $X(\gamma)$ {\it positively induced}.

Let $X$ be a $\brgK$-module. 
A diagram 
\[
X 
\simeq 
\begin{matrix}
V_{1} 
\\
V_{2} \oplus V_{3}
\\
V_{4}
\end{matrix} 
\]
means that the socle of $X$ is $V_{4}$ and 
the socle of $X/V_{4}$ is $V_{2} \oplus V_{3}$ and so on. 
In this section, 
we use such diagrammatic expression of the socle filtration of a
$\brgK$-module. 

Next, we introduce some methods used in this section. 

\subsection{Parity condition.} 
The first one is the parity condition. 
As a result of Kazhdan-Lusztig-Vogan conjecture and 
Theorem~9.5.1 of \cite{Vogan Green}, the following proposition holds: 
\begin{proposition}{\rm (Parity condition \cite[Corollary~4.5]{HTY})}
\label{proposition:parity condition}
Suppose $V$ is a $\brgK$-module of finite length 
which admits a non-singular infinitesimal character. 
If the integral lengths of the irreducible factors in the 
$k$-th floor of the socle filtration of $V$ 
are all even (resp. odd), 
then those of the factors in $(k+1)$-st floor are all odd (resp. even). 
\end{proposition}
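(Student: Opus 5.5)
The parity condition is a consequence of the Kazhdan--Lusztig--Vogan theory: the $\Ext^1$ groups between irreducibles are controlled by the degree-one coefficients of the KLV polynomials. The plan is to reduce the statement to the following claim. If $Y$ and $Z$ are irreducible modules with the same non-singular infinitesimal character and $\Ext^{1}_{\gK}(Y,Z)\neq 0$, then $l^{I}(Y)$ and $l^{I}(Z)$ have opposite parity. Here $l^{I}$ is the integral length of the Langlands parameter.

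First we make the reduction. Let $\soc^{k} V$ denote the socle filtration of $V$, so that the $k$-th floor is $F_{k}:=\soc^{k}V/\soc^{k-1}V$. By definition, $F_{k+1}$ is the socle of $V/\soc^{k}V$. Take an irreducible summand $Y$ of $F_{k+1}$ and its preimage $\widetilde{Y}\subset V/\soc^{k-1}V$. Then $\widetilde{Y}$ is an extension of $Y$ by a submodule of $F_{k}$, and it is a non-split one on every summand. Indeed, if $Y$ split off from $\widetilde{Y}/Z'$ for some quotient $Z'$ of the $F_{k}$-part, then $Y$ would lie in the socle of $V/\soc^{k-1}V$, i.e. in $F_{k}$, contradicting maximality. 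So there is an irreducible $Z\subset F_{k}$ with $\Ext^{1}(Y,Z)\neq 0$. If all factors of $F_k$ have even length, the claim forces $l^{I}(Y)$ to be odd, and symmetrically in the odd case. This gives the proposition.

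Second, we prove the claim. By translation we may pass to regular integral infinitesimal character with respect to $R(\Lambda)$. Translation within a chamber is an equivalence of categories preserving $\Ext$ and lengths. We then invoke Vogan's Theorem~9.5.1 of \cite{Vogan Green}: for regular infinitesimal character, $\dim\Ext^{1}(\overline{X}(\gamma),\overline{X}(\delta))$ equals the coefficient $\mu(\gamma,\delta)$ of $q^{(l^{I}(\gamma)-l^{I}(\delta)-1)/2}$ in the KLV polynomial $P_{\delta,\gamma}$ (or in $P_{\gamma,\delta}$, depending on which length is larger), provided the Kazhdan--Lusztig--Vogan conjecture holds. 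The conjecture is now a theorem of Beilinson--Bernstein and Vogan. The polynomials $P$ are polynomials in $q$, so such a coefficient can be nonzero only when $l^{I}(\gamma)-l^{I}(\delta)$ is odd. The theorem also shows $\Ext^{1}$ vanishes when the lengths are equal. This gives the claim.

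The main obstacle is bookkeeping in the second step. We must ensure the $\Ext^{1}$ used is taken in the same category as the socle filtration. $V$ need not have a fixed infinitesimal character, only a generalized one, so we work in the category $\mathcal{H}_{G}[\Lambda]$ of modules with generalized infinitesimal character. Vogan's result is stated there, and self-extensions with the same parameter contribute only equal-length pairs, which are excluded by $\Ext^1(Y,Y)$ vanishing at regular character. A second point of care is that integral length must be used consistently when $\Lambda$ is non-integral, by working with the integral root system $R(\Lambda)$ throughout. Modulo these points the argument is exactly \cite[Corollary~4.5]{HTY}.
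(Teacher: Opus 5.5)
Your overall route matches what the paper relies on. A non-split extension between consecutive floors gives a non-zero $\Ext^{1}$ between irreducibles, and by the KLV conjecture and \cite[Theorem~9.5.1]{Vogan Green} such an $\Ext^{1}$ vanishes unless the integral lengths have opposite parity. This is the content of \cite[Corollary~4.5]{HTY}, which is all the paper quotes.

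\textbf{The gap.} Your last paragraph makes a false claim, and the argument as written rests on it. You say $V$ need only have a \emph{generalized} infinitesimal character, that you work in $\mathcal{H}_{G}[\Lambda]$, and that $\Ext^{1}(Y,Y)=0$ there. Neither that vanishing nor the proposition holds in that category. Here is a counterexample.
\begin{itemize}
\item Take an irreducible principal series $X_{\nu}$ with non-singular infinitesimal character, e.g.\ for $SL(2,\R)$ with $\nu\neq 0$ generic.
\item In the compact picture, the action $\pi_{\nu+t}$ of $\lie{g}$ on $X_{\nu+t}$ is polynomial in $t$.
\item Let $\lie{g}$ act on $X_{\nu}\oplus X_{\nu}$ by $\left(\begin{smallmatrix}\pi_{\nu} & \pi'_{\nu}\\ 0 & \pi_{\nu}\end{smallmatrix}\right)$, with $\pi'_{\nu}$ the $t$-derivative at $t=0$, and let $K$ act diagonally.
\item This gives an extension $0\to X_{\nu}\to E\to X_{\nu}\to 0$ on which the Casimir element acts by a non-trivial Jordan block.
\item Hence $E$ is non-split, $\soc E=X_{\nu}$, and both floors of $E$ have the same integral length.
\end{itemize}

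So the hypothesis ``admits an infinitesimal character'' must be read literally as $V\in\mathcal{H}_{G}[\Lambda]^{(1)}$. This is how the paper applies it, to subquotients of $\WhLmg$. The $\Ext^{1}$ must then be taken in that category. Your $\widetilde{Y}$ is a subquotient of $V$, so it has infinitesimal character $\Lambda$. It therefore defines a non-zero class in the Yoneda $\Ext^{1}$ of $\mathcal{H}_{G}[\Lambda]^{(1)}$, which is the group Vogan's theorem controls. It is also the group the paper uses in Lemma~\ref{lemma:Ext of DS and its Cayley transf} when it asserts $\Ext^{1}_{\gK}(\overline{X}(\gamma^{\alpha}),\overline{X}(\gamma^{\alpha}))=0$. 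With this correction your proof goes through.

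\textbf{Two smaller inaccuracies.}
\begin{itemize}
\item Translation functors only move $\Lambda$ inside $\Lambda+\mathcal{L}$, so they cannot make a non-integral $\Lambda$ integral. No translation is needed anyway, since Vogan's theory is formulated directly in terms of $R(\Lambda)$ and $l^{I}$ for any non-singular $\Lambda$.
\item In the reduction step, the extension $0\to F_{k}\to\widetilde{Y}\to Y\to 0$ need not stay non-split after pushing out to \emph{every} summand $Z_{i}$ of $F_{k}$. What is true, and all you need, is that its class in $\Ext^{1}(Y,F_{k})=\bigoplus_{i}\Ext^{1}(Y,Z_{i})$ is non-zero, so at least one component is.
\end{itemize}
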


\subsection{Multiplicity in the second floor.}
The second one is on the extension group for a discrete series and its
Cayley transform. 

\begin{lemma}
\label{lemma:Ext of DS and its Cayley transf}
Suppose $G$ has a compact Cartan subgroup $H_{c}$. 
Let $\gamma$ be a regular character of $H_{c}$ such that 
$X(\gamma) = \overline{X}(\gamma)$ is a large discrete
series module with non-singular infinitesimal character. 
Choose a positive system $\Delta^{+}$ of the root system 
$\Delta(\lie{g}, \lie{h}_{c})$ so that $\overline{\gamma}$ is
dominant for it. 
For a simple non-compact imaginary root $\alpha \in \Delta^{+}$, 
let $\gamma^{\alpha}$ be the Cayley transform of $\gamma$ by
$\alpha$. 
Let $\gamma'$ be a regular character whose $\tau$-invariant
contains $\alpha$ but $\gamma'$ is not $K$-conjugate to
$\gamma^{\alpha}$. 
Then 
\begin{align*}
& 
\Ext_{\gK}^{1}
(\overline{X}(\gamma^{\alpha}), \overline{X}(\gamma)) 
\simeq \C
& 
& \mbox{and} 
& 
& 
\Ext_{\gK}^{1}
(\overline{X}(\gamma'), \overline{X}(\gamma)) 
= 0.  
\end{align*}
Especially, the multiplicity of $\overline{X}(\gamma^{\alpha})$ 
in the second floor of the socle filtration of 
$E(\overline{X}(\gamma))$ is at most one. 
\end{lemma}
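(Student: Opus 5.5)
The plan is to compute $\Ext^{1}$ into the discrete series via the Kazhdan--Lusztig--Vogan theory. Then the last statement will follow from the structure of the injective envelope.

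\textbf{Step 1: reduce to a KLV coefficient.} For non-singular infinitesimal character, Theorem~9.5.1 of \cite{Vogan Green} together with the Kazhdan--Lusztig--Vogan conjecture identifies
\[
\dim \Ext_{\gK}^{1}(\overline{X}(\gamma''), \overline{X}(\gamma))
\]
with a coefficient of a Kazhdan--Lusztig--Vogan polynomial. Concretely, it is the $\mu$-function $\mu(\gamma'',\gamma)$ when $l^{I}(\gamma'')$ and $l^{I}(\gamma)$ differ by one, and it is zero otherwise. This uses the parity/purity behind Proposition~\ref{proposition:parity condition}. Since $X(\gamma)$ is a discrete series, it is the standard module of minimal length in its block. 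Thus the nonzero $\mu(\gamma'',\gamma)$ that can occur come from parameters $\gamma''$ of length $l^{I}(\gamma)+1$, that is, from a single Cayley transform of $\gamma$ through a simple noncompact imaginary root.

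\textbf{Step 2: use the $\tau$-invariant.} Take $\gamma''$ with $\alpha\in\tau(\overline{X}(\gamma''))$. Since $\gamma$ is large, $\alpha\notin\tau(\overline{X}(\gamma))$. Vogan's $\tau$-invariant calculus (the ``$\mu$ for a descent'' rule, \cite[Ch.~7--9]{Vogan Green}) then applies to $\Ext^{1}$ between a module whose $\tau$-invariant contains $\alpha$ and one whose $\tau$-invariant does not. This calculus, via the wall-crossing functor $\psi$ to the $\alpha$-wall and back, shows such an extension is nonzero only if $\overline{X}(\gamma'')$ occurs in $U_{\alpha}(\overline{X}(\gamma))$, the $\alpha$-wall-crossing, in the ``$\mu$'' part. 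I would compute $U_{\alpha}\overline{X}(\gamma)$ directly from the Hecht--Schmid character identity. The result should be that translating the discrete series to the $\alpha$-wall and back gives a module with socle and top $\overline{X}(\gamma)$ and middle layer $\overline{X}(\gamma^{\alpha})$ (all constituents of $X(\gamma^{\alpha})$ other than $\overline{X}(\gamma)$, with the correct $\tau$-invariant). If $\gamma^{\alpha}$ has two $K$-conjugacy classes (type II), we take the appropriate one. It follows that $\Ext^{1}(\overline{X}(\gamma^{\alpha}),\overline{X}(\gamma))\simeq\C$ and $\Ext^{1}(\overline{X}(\gamma'),\overline{X}(\gamma))=0$ for every other $\gamma'$ with $\alpha\in\tau(\gamma')$. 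The multiplicity one comes from the Hecht--Schmid identity $\Theta(X(\gamma))+\Theta(X(s_{\alpha}\gamma))=\Theta(X(\gamma^{\alpha}))$, in which $\gamma^{\alpha}$ appears once.

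\textbf{Step 3: the consequence for $E(\overline{X}(\gamma))$.} The second floor of the socle filtration of an injective envelope $E(Y)$ of an irreducible $Y$ is $\soc(E(Y)/Y)$. The multiplicity of an irreducible $Z$ in this layer equals $\dim\Hom(Z,E(Y)/Y)$. By injectivity of $E(Y)$ this is $\dim\Ext^{1}(Z,Y)$, computed in $\mathcal{H}_{G}[\Lambda]^{(1)}$. The $\Ext^{1}$ in this category injects into the $\gK$-Ext, since extensions with an infinitesimal character are in particular $\gK$-extensions. Hence the multiplicity is at most one.

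The main obstacle is Step 2. The hard part is pinning down the wall-crossing structure precisely enough to exclude the other $\gamma'$ (and the non-conjugate Cayley transform), rather than just bounding the dimension.
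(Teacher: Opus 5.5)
Your Step 2 is the paper's argument, your Step 1 contains a false claim that you should drop, and your Step 3 takes a different and cleaner route than the paper.

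\textbf{Step 2.} The paper uses the same two ingredients. The first is Vogan's isomorphism $\Ext^{1}_{\gK}(Y,\overline{X}(\gamma))\simeq\Hom_{\gK}(Y,U_{\alpha}(\overline{X}(\gamma)))$ for irreducible $Y$ with $\alpha\in\tau(Y)$ (Corollary~9.5.7 of \cite{Vogan Green}). You need this isomorphism, not just the ``only if'' you state, to get the bound $\dim\Ext^{1}\le 1$. The second is the structure of $U_{\alpha}$ of the discrete series, which the paper quotes from Theorem~8.5.18 of \cite{Vogan Green}.

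If you derive that structure yourself, do not use $\Theta(X(\gamma))+\Theta(X(s_{\alpha}\times\gamma))=\Theta(X(\gamma^{\alpha}))$. That is the Hecht--Schmid identity for limits on the $\alpha$-wall. At regular infinitesimal character it is false, because $X(\gamma^{\alpha})$ also contains $\overline{X}(\gamma^{\alpha})$.

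What you want is the coherent continuation formula for a type I noncompact imaginary root, combined with $X(\gamma^{\alpha})=\overline{X}(\gamma)+\overline{X}(s_{\alpha}\times\gamma)+\overline{X}(\gamma^{\alpha})$ in the Grothendieck group. It shows that translating $\overline{X}(\gamma)$ to the $\alpha$-wall and back gives character $2\Theta(\overline{X}(\gamma))+\Theta(\overline{X}(\gamma^{\alpha}))$. Hence $U_{\alpha}(\overline{X}(\gamma))\simeq\overline{X}(\gamma^{\alpha})$. This gives the upper bound. Through the simple socle of that module it also gives the non-split extension, which the paper instead exhibits inside $X(\gamma^{\alpha})$. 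In type II both Cayley transforms occur in $U_{\alpha}$, so there is no ``appropriate one'' to choose.

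\textbf{Step 1.} As stated it is wrong. Under Theorem~9.5.1 of \cite{Vogan Green} and the KLV conjecture, $\dim\Ext^{1}$ between irreducibles is the $\mu$-function. This is the top allowed coefficient of the relevant KLV polynomial, and it can be nonzero for any odd length difference, not only for difference one. Nothing depends on it, because Step 2 handles every $\gamma'$ with $\alpha\in\tau(\gamma')$ regardless of length.

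\textbf{Step 3.} The paper takes a longer route. It proves $\Ext^{1}_{\gK}(\overline{X}(\gamma^{\alpha}),E)=0$ for the non-split extension $E$, using a long exact sequence and $\Ext^{1}_{\gK}(\overline{X}(\gamma^{\alpha}),\overline{X}(\gamma^{\alpha}))=0$. From this it rules out a subquotient with $\overline{X}(\gamma^{\alpha})^{\oplus 2}$ over $\overline{X}(\gamma)$.

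You instead use $\soc E(Y)=Y$ and the injectivity of $E(Y)$ to get $\Hom(Z,E(Y)/Y)\simeq\Ext^{1}_{\mathcal{H}_{G}[\Lambda]^{(1)}}(Z,Y)$ for irreducible $Z\not\simeq Y$. Fullness of the subcategory then embeds this into $\Ext^{1}_{\gK}(Z,Y)$. This identifies the second-floor multiplicity with a dimension of $\Ext^{1}$ in the category. Since the non-split extension lies in $X(\gamma^{\alpha})$, which has infinitesimal character $\Lambda$, your argument even shows the multiplicity is exactly one.
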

\begin{proof}
Theorem~8.5.18 of \cite{Vogan Green} implies that 
$U_{\alpha}(\overline{X}(\gamma))$ 
(\cite[Definition~7.3.17]{Vogan Green}) contains
$\overline{X}(\gamma^{\alpha})$ with multiplicity one 
and does not contain $\overline{X}(\gamma')$. 
Then by Corollary~9.5.7 of \cite{Vogan Green}, 
\[
\dim \Ext_{\gK}^{1}
(\overline{X}(\gamma^{\alpha}), \overline{X}(\gamma)) 
= 
\dim \Hom_{\gK}
(\overline{X}(\gamma^{\alpha}), U_{\alpha}(\overline{X}(\gamma))) 
\leq 1. 
\]
Moreover, a non-splitting extension of
$\overline{X}(\gamma)$ by $\overline{X}(\gamma^{\alpha})$ 
is realized in $X(\gamma^{\alpha})$. 
Therefore, the equality holds in the above inequality. 
The proof of 
$\Ext_{\gK}^{1}
(\overline{X}(\gamma'), \overline{X}(\gamma)) 
= 0$ 
is the same. 

The final statement is a consequence of this result. 
In fact, let 
\[
0 \to \overline{X}(\gamma) \to E \to \overline{X}(\gamma^{\alpha}) \to 0
\]
be a non-splitting exact sequence. 
It yields a long exact sequence 
\begin{align*}
0 &\to 
\Hom_{\gK}(\overline{X}(\gamma^{\alpha}), \overline{X}(\gamma)) 
\to 
\Hom_{\gK}(\overline{X}(\gamma^{\alpha}), E) 
\to 
\Hom_{\gK}
(\overline{X}(\gamma^{\alpha}), \overline{X}(\gamma^{\alpha})) 
\\
& 
\to 
\Ext_{\gK}^{1}(\overline{X}(\gamma^{\alpha}), \overline{X}(\gamma)) 
\to 
\Ext_{\gK}^{1}(\overline{X}(\gamma^{\alpha}), E) 
\to 
\Ext_{\gK}^{1}
(\overline{X}(\gamma^{\alpha}), \overline{X}(\gamma^{\alpha})). 
\end{align*}
The final term is zero by Theorem~9.5.1 of \cite{Vogan Green}. 
Therefore, from the first part of this lemma, 
we obtain an exact sequence 
\[
0 
\to 
0 
\to 
0 
\to 
\C 
\to 
\C 
\to 
\Ext_{\gK}^{1}(\overline{X}(\gamma^{\alpha}), E) 
\to
0. 
\]
This implies that 
$\Ext_{\gK}^{1}(\overline{X}(\gamma^{\alpha}), E)=0$, 
so there is no non-splitting extension of $E$ 
by $\overline{X}(\gamma^{\alpha})$. 
This means that there is no module which contains a subquotient module 
isomorphic to 
$\begin{matrix}
\overline{X}(\gamma^{\alpha}) \oplus \overline{X}(\gamma^{\alpha}) 
\\
\overline{X}(\gamma) 
\end{matrix}$. 
The final statement of this lemma follows from this. 
\end{proof}

Hereafter, we present examples of the socle filtrations of $E(X)$. 
Among the rank two cases, the case when $G=Sp(2,\R)$ and $X$ is a
discrete series seems to require the most technical ingenuity. 
So we explain this case in detail and for other cases we only present
the final results. 

\


\noindent
\textbf{I. $G=Sp(2,\R)$, block $PSO(3,2)$. }

In the group $Sp(2,\R)$, 
there are four conjugacy classes of Cartan subgroups. 
The fundamental Cartan subgroup $H_{f}$ is a compact Cartan subgroup. 
Between $H_{f}$ and the split Cartan subgroup $H_{s}$, 
there are two Cartan subgroups. 
One is the Cayley transform of the compact Cartan subgroup 
through a long non-compact imaginary root, 
which we denote by $H_{J} = T_{J} A_{J}$. 
The other is the Cayley transform of the compact Cartan subgroup
through a short non-compact imaginary root, 
which we denote by $H_{S} = T_{S} A_{S}$ 
(``J'' stands for Jacobi and ``S'' for Siegel). 

There are four $K$-conjugacy classes of regular characters of $H_{f}$, 
which we call $\gamma_{j}$ ($j=0,1,2,3$). 
These are discrete series modules, and they are contained in the block
$PSO(3,2)$. 
There are eight $K$-conjugacy classes of regular characters 
of $H_{J}$, 
which we denote by $\gamma_{5}$, $\gamma_{6}$, $\gamma_{7}$,
$\gamma_{8}$ 
and $\gamma_{0'}$, $\gamma_{1'}$, $\gamma_{2'}$, $\gamma_{3'}$. 
The regular characters $\gamma_{j}$ ($j=5,6,7,8$) are contained in
the block $PSO(3,2)$ and 
$\gamma_{j'}$ ($j=0,1,2,3$) are contained in the block $PSO(4,1)$. 
There are two $K$-conjugacy classes of regular characters 
of $H_{S}$, 
which we denote by $\gamma_{4}$ and $\gamma_{9}$. 
These are contained in the block $PSO(3,2)$. 
There are four $K$-conjugacy classes of regular characters 
of $H_{s}$, 
which we denote by $\gamma_{10}$, $\gamma_{11}$, $\gamma_{4'}$ 
and $\gamma_{0''}$. 
$\gamma_{10}$ and $\gamma_{11}$ are contained in the block $PSO(3,2)$, 
$\gamma_{4'}$ in the block $PSO(4,1)$ and the block $PSO(5)$ consists
of $\gamma_{0''}$ only. 
The regular characters which correspond to irreducible large 
modules are $\gamma_{0}$, $\gamma_{1}$, $\gamma_{0'}$, $\gamma_{1'}$ 
and $\gamma_{0''}$. 
$\overline{X}(\gamma_{0})$ and $\overline{X}(\gamma_{1})$ 
are the large discrete series modules. 

Since the block $PSO(5)$ consists of $\overline{X}(\gamma_{0''})$, 
its injective envelope is isomorphic to itself; 
$E(\overline{X}(\gamma_{0''})) \simeq \overline{X}(\gamma_{0''})$.

Let $\alpha_{1}$, $\alpha_{2}$ be the long and short simple root,
respectively. 
Table~\ref{table:PSO(3,2)} is the data on the regular characters in
the block $PSO(3,2)$ 
(cf. \cite{Vogan PM40}).  
\begin{table}[h]
\centering
\caption{Data on the regular characters in the block $PSO(3,2)$}
\label{table:PSO(3,2)}
\begin{tblr}{|c|c|c|c|c|c|c|c|c|}
\hline
\SetCell[r=2]{c} 
& \SetCell[r=2]{c} CSG & \SetCell[c=2]{c} simple root 
& & 
\SetCell[c=2]{c} cross action 
& & 
\SetCell[c=2]{c} Cayley transf.  
&
& \SetCell[r=2]{c} length 
\\ \hline
& & $\alpha_{1}$ & $\alpha_{2}$ 
& $s_{1} \times *$ & $s_{2} \times *$ & 
$\alpha_{1}$ & $\alpha_{2}$ & 
\\ \hline 
$\gamma_{0}$ & $H_{f}$ & ncI & ncI & 
$\gamma_{2}$ & $\gamma_{1}$ & $\gamma_{5}$ & $\gamma_{4}$ & $0$
\\ \hline 
$\gamma_{1}$ & $H_{f}$ & ncI & ncI & 
$\gamma_{3}$ & $\gamma_{0}$ & $\gamma_{6}$ & $\gamma_{4}$ & $0$
\\ \hline 
$\gamma_{2}$ & $H_{f}$ & ncI & cpt 
& $\gamma_{0}$ & $\gamma_{2}$ & $\gamma_{5}$ & -- 
& $0$
\\ \hline 
$\gamma_{3}$ & $H_{f}$ & ncI & cpt 
& $\gamma_{1}$ & $\gamma_{3}$ & $\gamma_{6}$ & -- 
& $0$
\\ \hline 
$\gamma_{4}$ & $H_{S}$ & $C^{+}$ & rI 
& $\gamma_{9}$ & $\gamma_{4}$ & -- & $\gamma_{0}$, $\gamma_{1}$ 
& $1$
\\ \hline 
$\gamma_{5}$ & $H_{J}$ & rI & $C^{+}$ 
& $\gamma_{5}$ & $\gamma_{7}$ & $\gamma_{0}$, $\gamma_{2}$ & --
& $1$
\\ \hline 
$\gamma_{6}$ & $H_{J}$ & rI & $C^{+}$ 
& $\gamma_{6}$ & $\gamma_{8}$ & $\gamma_{1}$, $\gamma_{3}$ & --
& $1$
\\ \hline 
$\gamma_{7}$ & $H_{J}$ & ncI & $C^{-}$ 
& $\gamma_{8}$ & $\gamma_{5}$ & $\gamma_{10}$ & --
& $2$
\\ \hline 
$\gamma_{8}$ & $H_{J}$ & ncI & $C^{-}$ 
& $\gamma_{7}$ & $\gamma_{6}$ & $\gamma_{10}$ & --
& $2$
\\ \hline 
$\gamma_{9}$ & $H_{S}$ & $C^{-}$ & ncII 
& $\gamma_{4}$ & $\gamma_{9}$ & -- & $\gamma_{10}$, $\gamma_{11}$ 
& $2$
\\ \hline 
$\gamma_{10}$ & $H_{s}$ & rI & rII 
& $\gamma_{10}$ & $\gamma_{11}$ & $\gamma_{7}$, $\gamma_{8}$ 
& $\gamma_{9}$ 
& $3$
\\ \hline 
$\gamma_{11}$ & $H_{s}$ & rn & rII
& $\gamma_{11}$ & $\gamma_{10}$ & -- & $\gamma_{9}$ 
& $3$
\\ \hline 
\end{tblr}
\end{table}

Here, $C^{+}$ (resp. $C^{-}$) means a complex root $\alpha$ with 
$\theta \alpha$ positive (resp. negative), 
where $\theta$ is the Cartan involution. 
Also, cpt means a compact imaginary root, 
ncI, ncII (resp. rI, rII) mean noncompact imaginary roots 
(resp. real roots) of type I and II, 
and rn means a real root not in the range of Cayley transforms.

The composition factors of $X(\gamma)$ is known by the
Kazhdan-Lusztig-Vogan (KLV) conjecture. 
In the Grothendieck group, 
\begin{align}
&
X(\gamma_{i}) = \overline{X}(\gamma_{i}), 
\quad i=0,1,2,3, 
\notag\\
& 
X(\gamma_{4}) = 
\overline{X}(\gamma_{0}) + \overline{X}(\gamma_{1}) 
+ \overline{X}(\gamma_{4}), 
\notag\\
&
X(\gamma_{5}) = 
\overline{X}(\gamma_{0}) + \overline{X}(\gamma_{2}) 
+ \overline{X}(\gamma_{5}), 
\qquad
X(\gamma_{6}) = 
\overline{X}(\gamma_{1}) + \overline{X}(\gamma_{3}) 
+ \overline{X}(\gamma_{6}), 
\notag\\
&
X(\gamma_{7}) = 
\overline{X}(\gamma_{0}) + \overline{X}(\gamma_{4})
+\overline{X}(\gamma_{5}) + \overline{X}(\gamma_{7}), 
\notag\\
&
X(\gamma_{8}) 
= 
\overline{X}(\gamma_{1})+\overline{X}(\gamma_{4})
+\overline{X}(\gamma_{6})+\overline{X}(\gamma_{8}), 
\notag\\
&
X(\gamma_{9})
= 
\overline{X}(\gamma_{0}) 
+ \overline{X}(\gamma_{1}) + \overline{X}(\gamma_{4})
+ \overline{X}(\gamma_{5}) + \overline{X}(\gamma_{6})
+ \overline{X}(\gamma_{9}), 
\notag\\
&
X(\gamma_{10})
= 
\overline{X}(\gamma_{0}) + \overline{X}(\gamma_{1}) 
+ 2 * \overline{X}(\gamma_{4}) + \overline{X}(\gamma_{5}) 
+ \overline{X}(\gamma_{6}) 
\notag\\
& \hspace{20mm}
+ \overline{X}(\gamma_{7}) 
+ \overline{X}(\gamma_{8}) + \overline{X}(\gamma_{9}) 
+ \overline{X}(\gamma_{10}), 
\notag\\
&
X(\gamma_{11})
= 
\overline{X}(\gamma_{0}) + \overline{X}(\gamma_{1}) 
+ \overline{X}(\gamma_{2}) + \overline{X}(\gamma_{3}) 
\notag\\
& \hspace{20mm}
+ \overline{X}(\gamma_{4}) + \overline{X}(\gamma_{5}) 
+ \overline{X}(\gamma_{6}) + \overline{X}(\gamma_{9}) 
+ \overline{X}(\gamma_{11}). 
\notag
\end{align}

It is known that 
$\overline{X}(\gamma_{0})^{c} 
\simeq 
\overline{X}(\gamma_{1})$, 
$\overline{X}(\gamma_{2})^{c} 
\simeq 
\overline{X}(\gamma_{3})$, 
$\overline{X}(\gamma_{5})^{c} 
\simeq 
\overline{X}(\gamma_{6})$, 
$\overline{X}(\gamma_{7})^{c} 
\simeq 
\overline{X}(\gamma_{8})$, 
and $\overline{X}(\gamma_{j})$ ($j=4,9,10,11$) are self-adjoint. 

Consider the case when $\eta$ is in the wave front set of
$\overline{X}(\gamma_{0})$, 
so this module is the unique irreducible submodule of $\WhLmg$. 
The socle filtrations of positively induced standard modules 
$X(\gamma_{4})$ and $X(\gamma_{5})$ are 
\begin{align*}
& 
X(\gamma_{4}) 
\simeq 
\begin{matrix} 
\overline{X}(\gamma_{4}) 
\\
\overline{X}(\gamma_{0})
\oplus 
\overline{X}(\gamma_{1}) 
\end{matrix}
&
& 
\mbox{and}  
&
& 
X(\gamma_{5}) 
\simeq 
\begin{matrix} 
\overline{X}(\gamma_{5}) 
\\
\overline{X}(\gamma_{0})
\oplus 
\overline{X}(\gamma_{2}) 
\end{matrix}, 
\end{align*}
respectively. 
Since $\eta \in \WF(\overline{X}(\gamma_{0}))$, 
Theorems~\ref{theorem:irred sub of WhLmg}(2) 
and \ref{theorem:exactness of Wh vector}(2) imply that there are
non-zero $\brgK$-homomorphisms from $X(\gamma_{4})$ and
$X(\gamma_{5})$ to $\WhLmg$. 
By the last statement of 
Theorem~\ref{theorem:holomorphy of Jacquet}, 
these homomorphisms are realized by some Jacquet integrals. 
Then by Proposition~\ref{proposition:analy. conti of Jacquet}, there
are $\brgK$-submodules of $\WhLmg$ whose characters are 
$\ch(X(\gamma_{4}))$ and $\ch(X(\gamma_{5}))$. 
Since $\overline{X}(\gamma_{0})$ is the unique irreducible submodule
of $\WhLmg$, 
the parity condition implies that 
the next diagram is contained in the bottom of the socle filtration 
of $E(\overline{X}(\gamma_{0}))$. 
\begin{equation}
\label{eq:Sp2 PSO(3,2)-1}
\begin{matrix}
\overline{X}(\gamma_{1}) 
\oplus 
\overline{X}(\gamma_{2}) 
\\
\overline{X}(\gamma_{4}) 
\oplus 
\overline{X}(\gamma_{5}) 
\\
\overline{X}(\gamma_{0})
\end{matrix}. 
\end{equation}

Consider the standard module $X(\gamma_{10})$ realized by a principal
series positively induced from the character 
$\gamma_{10} 
= (\sigma, \Lambda) \in \widehat{M} \times \lie{a}^{\ast}$ of $H_{s}$. 
Let $P_{J} = M_{J} A_{J} N_{J}$ be the parabolic subgroup of $G$ 
such that the Cartan subgroup $H_{J}=T_{J} A_{J}$ is a Cartan subgroup
of $M_{J} A_{J}$ and it contains the minimal parabolic $P$. 
By induction by stages, 
\begin{equation}
\label{eq:induced expr of X(gamma_10)}
X(\gamma_{10}) 
\simeq 
\Ind_{P_{J}}^{G}
(\Ind_{P \cap M_{J}}^{M_{J}}
(\sigma \otimes e^{\Lambda|_{A \cap M_{J}}+\rho_{1}}
\otimes 1_{N \cap M_{J}})_{M_{J} \cap K} 
\otimes e^{\Lambda|_{A_{J}}+\rho_{1}} 
\otimes 1_{N_{J}})_{K}. 
\end{equation}
Here, $\rho_{1}$ and $\rho_{2}$ are appropriately defined
$\rho$-shift. 
$M_{J}$ is isomorphic to $SL(2,\R) \times \{\pm 1\}$ and then 
$\Ind_{P \cap M_{J}}^{M_{J}}
(\sigma \otimes e^{\Lambda|_{A \cap M_{J}}+\rho_{1}}
\otimes 1_{N \cap M_{J}})_{M_{J} \cap K}$ 
is a reducible principal series of
$SL(2,\R) \times \{\pm 1\}$, 
whose socle consists of two discrete series. 
Since $\gamma_{7}$ is a Cayley transform of $\gamma_{10}$ by the
long simple root, $X(\gamma_{7})$ is the generalized principal series 
induced from one of the two discrete series modules
of $M_{J}$ in the same way as \eqref{eq:induced expr of X(gamma_10)}. 
It follows that $X(\gamma_{7})$ is a submodule of $X(\gamma_{10})$ in
the positively induced picture. 
In the same way, we can check that $X(\gamma_{9})$ is a submodule of
$X(\gamma_{10})$ and $X(\gamma_{11})$. 

By the Langlands classification, $\overline{X}(\gamma_{10})$ is the
unique irreducible quotient of $X(\gamma_{10})$, 
$\overline{X}(\gamma_{11})$ that of $X(\gamma_{11})$, 
$\overline{X}(\gamma_{7})$ that of $X(\gamma_{7})$ and  
$\overline{X}(\gamma_{9})$ that of $X(\gamma_{9})$. 
On the other hand, 
by Theorem~6.2(e) and Corollary~6.7 of \cite{Vogan GK-dim}, 
the socles of $X(\gamma_{10})$ and $X(\gamma_{11})$ are 
$\overline{X}(\gamma_{0}) \oplus \overline{X}(\gamma_{1})$. 
Therefore, the information of the composition factors of
$X(\gamma_{7})$ and $X(\gamma_{9})$ implies that 
the socle of $X(\gamma_{7})$ consists of $\overline{X}(\gamma_{0})$
alone and that of $X(\gamma_{9})$ is 
$\overline{X}(\gamma_{0}) \oplus \overline{X}(\gamma_{1})$. 
So by the parity condition, the socle filtrations of $X(\gamma_{7})$
and $X(\gamma_{9})$ are determined: 
\begin{align*}
& 
X(\gamma_{7}) 
\simeq 
\begin{matrix}
\overline{X}(\gamma_{7}) 
\\
\overline{X}(\gamma_{4}) 
\oplus 
\overline{X}(\gamma_{5}) 
\\
\overline{X}(\gamma_{0}) 
\end{matrix}, 
&
&
\mbox{and} 
&
&
X(\gamma_{9}) 
\simeq 
\begin{matrix}
\overline{X}(\gamma_{9}) 
\\
\overline{X}(\gamma_{4}) 
\oplus 
\overline{X}(\gamma_{5}) 
\oplus 
\overline{X}(\gamma_{6}) 
\\
\overline{X}(\gamma_{0}) 
\oplus 
\overline{X}(\gamma_{1}) 
\end{matrix}. 
\end{align*}
Bearing the followings in mind; 
(i) the above informations on the structures of 
$X(\gamma_{10})$ and $X(\gamma_{11})$, 
especially 
$X(\gamma_{7}), X(\gamma_{9}) \hookrightarrow X(\gamma_{10})$ and 
$X(\gamma_{9}) \hookrightarrow X(\gamma_{11})$, 
(ii) the informations of composition factors of $X(\gamma_{j})$ 
($j=10,11$), 
(iii) the images of Jacquet integrals $X(\gamma_{10}) \to \WhLmg$
and $X(\gamma_{11}) \to \WhLmg$ are 
compatible with \eqref{eq:Sp2 PSO(3,2)-1} 
and (iv) the parity condition, 
we know that the following diagrams are contained in the socle
filtration of $X(\gamma_{10})$ and $X(\gamma_{11})$ respectively: 
\begin{align*}
& 
\begin{matrix}
\overline{X}(\gamma_{10}) 
\\
\overline{X}(\gamma_{7}) 
\oplus 
\overline{X}(\gamma_{9}) 
\\
\overline{X}(\gamma_{4}) 
\oplus 
\overline{X}(\gamma_{5}) 
\\
\overline{X}(\gamma_{0}) 
\oplus 
\overline{X}(\gamma_{1}) 
\end{matrix},
&
& 
\begin{matrix}
\overline{X}(\gamma_{11}) 
\\
\overline{X}(\gamma_{2}) 
\oplus 
\overline{X}(\gamma_{3}) 
\oplus 
\overline{X}(\gamma_{9}) 
\\
\overline{X}(\gamma_{4}) 
\oplus 
\overline{X}(\gamma_{5}) 
\oplus 
\overline{X}(\gamma_{6}) 
\\
\overline{X}(\gamma_{0}) 
\oplus 
\overline{X}(\gamma_{1}) 
\end{matrix}.
\end{align*}

Consider the images of Jacquet integrals from $X(\gamma_{10})$ and 
$X(\gamma_{11})$ to $E(\overline{X}(\gamma_{0})) \subset \WhLmg$, 
which are non-zero on the $\overline{X}(\gamma_{0})$ in the socles of
them. 
We can check that the following diagram is contained in the socle 
filtration of $E(\overline{X}(\gamma_{0}))$: 
\begin{equation}
\label{eq:half of E(X_0)}
\begin{matrix}
\overline{X}(\gamma_{10}) 
\oplus 
\overline{X}(\gamma_{11}) 
\\
\overline{X}(\gamma_{1}) 
\oplus 
\overline{X}(\gamma_{2}) 
\oplus 
\overline{X}(\gamma_{9}) 
\oplus 
\overline{X}(\gamma_{7}) 
\\
\overline{X}(\gamma_{4}) 
\oplus 
\overline{X}(\gamma_{5}) 
\\
\overline{X}(\gamma_{0})
\end{matrix}. 
\end{equation}

Next, we use Theorem~\ref{theorem:1st duality theorem}.
In the Grothendieck group, 
\begin{align*}
E(\overline{X}(\gamma_{0})) 
&= \WhLmg - E(X(\gamma_{0'})) - E(X(\gamma_{0''})) 
= X(\gamma_{10}) + X(\gamma_{11}) 
\\
&= 
2 * \overline{X}(\gamma_{0}) 
+ 
2 * \overline{X}(\gamma_{1}) 
+ 
\overline{X}(\gamma_{2}) 
+ 
\overline{X}(\gamma_{3}) 
\\
& \quad 
+ 
3 * \overline{X}(\gamma_{4}) 
+ 
2 * \overline{X}(\gamma_{5}) 
+ 
2 * \overline{X}(\gamma_{6}) 
\\
& \quad 
+ 
\overline{X}(\gamma_{7}) 
+ 
\overline{X}(\gamma_{8}) 
+ 
2 * \overline{X}(\gamma_{9}) 
+ 
\overline{X}(\gamma_{10}) 
+ 
\overline{X}(\gamma_{11}). 
\end{align*}
Now, the multiplicity of the finite dimensional representation 
$\overline{X}(\gamma_{10})$ in $E(\overline{X}(\gamma_{0}))$ 
is one and it is self-adjoint. 
Then, by Theorem~\ref{theorem:1st duality theorem}, 
$\overline{X}(\gamma_{10})$ lies in the middle floor of 
$E(\overline{X}(\gamma_{0}))$. 
Moreover, note that the multiplicities of $\overline{X}(\gamma_{2})$
and $\overline{X}(\gamma_{3})$ in 
$E(\overline{X}(\gamma_{0}))$ are both one 
and $\overline{X}(\gamma_{2})^{c} = \overline{X}(\gamma_{3})$. 
Then by \eqref{eq:half of E(X_0)} and 
Theorem~\ref{theorem:1st duality theorem}, 
$\overline{X}(\gamma_{3})$ must be located above
$\overline{X}(\gamma_{11})$, so 
$E(\overline{X}(\gamma_{0}))^{c} 
\simeq 
E(\overline{X}(\gamma_{0}))$. 
Therefore, the following diagram is contained in the socle filtration
of $E(\overline{X}(\gamma_{0}))$: 
\[
\begin{matrix}
\overline{X}(\gamma_{1})
\\
\overline{X}(\gamma_{4}) 
\oplus 
\overline{X}(\gamma_{6}) 
\\
\overline{X}(\gamma_{0}) 
\oplus 
\overline{X}(\gamma_{3}) 
\oplus 
\overline{X}(\gamma_{9}) 
\oplus 
\overline{X}(\gamma_{8}) 
\\
\overline{X}(\gamma_{10}) 
\oplus 
\overline{X}(\gamma_{11}) 
\\
\overline{X}(\gamma_{1}) 
\oplus 
\overline{X}(\gamma_{2}) 
\oplus 
\overline{X}(\gamma_{9}) 
\oplus 
\overline{X}(\gamma_{7}) 
\\
\overline{X}(\gamma_{4}) 
\oplus 
\overline{X}(\gamma_{5}) 
\\
\overline{X}(\gamma_{0})
\end{matrix}. 
\]
Finally, compare this with the character 
of $E(\overline{X}(\gamma_{0}))$. 
Then one $\overline{X}(\gamma_{4})$, one $\overline{X}(\gamma_{5})$ 
and one $\overline{X}(\gamma_{6})$ are remaining. 
Since $\overline{X}(\gamma_{4})$ is self-dual, 
it must be located in the middle floor. 
By the parity condition, $\overline{X}(\gamma_{5})$ 
and $\overline{X}(\gamma_{6})$ must be located in the second, middle or
sixth floor, but by Lemma~\ref{lemma:Ext of DS and its Cayley transf}, 
they cannot be in the second floor. 
The self-duality forces them to be located in the middle floor. 

We obtain the socle filtration of 
$E(\overline{X}(\gamma_{0}))$: 
\begin{equation}
\label{eq:socle filtration of E(X(gamma_0)), PSO(3,2)}
E(\overline{X}(\gamma_{0})) 
\simeq 
\begin{matrix}
\overline{X}(\gamma_{1})
\\
\overline{X}(\gamma_{4}) 
\oplus 
\overline{X}(\gamma_{6}) 
\\
\overline{X}(\gamma_{0}) 
\oplus 
\overline{X}(\gamma_{3}) 
\oplus 
\overline{X}(\gamma_{9}) 
\oplus 
\overline{X}(\gamma_{8}) 
\\
\overline{X}(\gamma_{4}) 
\oplus 
\overline{X}(\gamma_{10}) 
\oplus 
\overline{X}(\gamma_{11}) 
\oplus 
\overline{X}(\gamma_{5}) 
\oplus 
\overline{X}(\gamma_{6}) 
\\
\overline{X}(\gamma_{1}) 
\oplus 
\overline{X}(\gamma_{2}) 
\oplus 
\overline{X}(\gamma_{9}) 
\oplus 
\overline{X}(\gamma_{7}) 
\\
\overline{X}(\gamma_{4}) 
\oplus 
\overline{X}(\gamma_{5}) 
\\
\overline{X}(\gamma_{0})
\end{matrix}. 
\end{equation}
The socle filtration of $E(\overline{X}(\gamma_{1}))$ is obtained by
turning this diagram upside down. 
\begin{remark}
\label{remark:self-duality in Sp(n,R) case}
In the above consideration, the facts that
$E(\overline{X}(\gamma_{0}))$ contains 
$\overline{X}(\gamma_{2})$ and $\overline{X}(\gamma_{3})$ with
multiplicity one and that 
$\overline{X}(\gamma_{2})^{c} \simeq
\overline{X}(\gamma_{3})$ 
forced 
$E(\overline{X}(\gamma_{0}))^{c} 
\simeq 
E(\overline{X}(\gamma_{0}))$. 
Just in the same way, we can show that, if $G = Sp(n,\R)$ (which does
not satisfy $G=G_{\max}$) and $X$ is a large irreducible module, then 
$E(X)^{c} \simeq E(X)$. Compare this with 
Theorems~\ref{theorem:1st duality theorem} and 
\ref{theorem:2nd duality theorem}. 
\end{remark}


\noindent
\textbf{II. $G=Sp(2,\R)$, block $PSO(4,1)$.} 

We use the notation of the block $PSO(3,2)$ case. 
Table~\ref{table:PSO(4,1)} is the data
on the regular characters in the block $PSO(4,1)$. 
\begin{table}[h]
\centering
\caption{Data on the regular characters in the block $PSO(4,1)$}
\label{table:PSO(4,1)}
\begin{tblr}{|c|c|c|c|c|c|c|c|c|}
\hline
\SetCell[r=2]{c} 
& \SetCell[r=2]{c} CSG & \SetCell[c=2]{c} simple root 
& & 
\SetCell[c=2]{c} cross action 
& & 
\SetCell[c=2]{c} Cayley transf.  
&
& \SetCell[r=2]{c} length 
\\ \hline
& & $\alpha_{1}$ & $\alpha_{2}$ 
& $s_{1} \times *$ & $s_{2} \times *$ & 
$\alpha_{1}$ & $\alpha_{2}$ & 
\\ \hline 
$\gamma_{0'}$ & $H_{J}$ & rn & $C^{+}$ & 
$\gamma_{0'}$ & $\gamma_{2'}$ & -- & -- & $1$
\\ \hline 
$\gamma_{1'}$ & $H_{J}$ & rn & $C^{+}$ & 
$\gamma_{1'}$ & $\gamma_{3'}$ & -- & -- & $1$
\\ \hline 
$\gamma_{2'}$ & $H_{J}$ & ncI & $C^{-}$ 
& $\gamma_{3'}$ & $\gamma_{0'}$ & $\gamma_{4'}$ & -- 
& $2$
\\ \hline 
$\gamma_{3'}$ & $H_{J}$ & ncI & $C^{-}$ 
& $\gamma_{2'}$ & $\gamma_{1'}$ & $\gamma_{4'}$ & -- 
& $2$
\\ \hline 
$\gamma_{4'}$ & $H_{s}$ & rI & rn 
& $\gamma_{4'}$ & $\gamma_{4'}$ & $\gamma_{2'}$, $\gamma_{3'}$ 
& -- & $3$
\\ \hline 
\end{tblr}
\end{table}

In the Grothendieck group, 
\begin{align*}
&
X(\gamma_{i'}) = \overline{X}(\gamma_{i'}), 
\quad i=0, 1, 
\qquad
X(\gamma_{j'}) 
= 
\overline{X}(\gamma_{(j-2)'}) + \overline{X}(\gamma_{j'}), 
\quad j=2,3 
\\
&
X(\gamma_{4'}) 
= 
\overline{X}(\gamma_{0'}) + \overline{X}(\gamma_{1'})
+
\overline{X}(\gamma_{2'}) + \overline{X}(\gamma_{3'}) 
+ \overline{X}(\gamma_{4'}). 
\end{align*}
Note that irreducible large modules are $\overline{X}(\gamma_{0'})$
and $\overline{X}(\gamma_{1'})$. 

The socle filtration of $E(\overline{X}(\gamma_{0'}))$ is given by 
\begin{align}
&
E(\overline{X}(\gamma_{0'})) 
\simeq 
\begin{matrix}
\overline{X}(\gamma_{1'})
\\
\overline{X}(\gamma_{3'}) 
\\
\overline{X}(\gamma_{4'})
\\
\overline{X}(\gamma_{2'})
\\
\overline{X}(\gamma_{0'})
\end{matrix}, 
\label{eq:socle filtration of E(X(gamma_0')), PSO(4,1)}
\end{align}
The socle filtration of $E(\overline{X}(\gamma_{1'}))$ is obtained by
turning this diagram upside down.


\

\noindent
\textbf{III. $G = SL(3,\R)$, block $PSU(2,1)$.}

In the group $SL(3,\R)$,
there are two conjugacy classes of Cartan subgroups. 
One is fundamental and the other is split. 
We denote them by $H_{f}$ and $H_{s}$, respectively. 
There are three $K$-conjugacy classes of regular characters of $H_{f}$, 
which we call $\gamma_{0}$, $\gamma_{1}$ and $\gamma_{2}$. 
There are four $K$-conjugacy classes of regular characters of $H_{s}$, 
which we call $\gamma_{3}$, $\gamma_{4}$, $\gamma_{5}$ and 
$\gamma_{0'}$. 
The regular characters $\gamma_{j}$ ($j=0,1,\dots,5$) are contained in
the block $PSU(2,1)$ and the block $PSU(3)$ consists of
$\gamma_{0'}$. 
$X(\gamma_{0'})$ is an irreducible principal series. 
Table~\ref{table:PSU(2,1)} is the data
on the regular characters in the block $PSU(2,1)$. 
\begin{table}[h]
\centering
\caption{Data on the regular characters in the block $PSU(2,1)$}
\label{table:PSU(2,1)}
\begin{tblr}{|c|c|c|c|c|c|c|c|c|}
\hline
\SetCell[r=2]{c} 
& \SetCell[r=2]{c} CSG & \SetCell[c=2]{c} simple root 
& & 
\SetCell[c=2]{c} cross action 
& & 
\SetCell[c=2]{c} Cayley transf.  
&
& \SetCell[r=2]{c} length 
\\ \hline
& & $\alpha_{1}$ & $\alpha_{2}$ 
& $s_{1} \times *$ & $s_{2} \times *$ & 
$\alpha_{1}$ & $\alpha_{2}$ & 
\\ \hline 
$\gamma_{0}$ & $H_{f}$ & $C^{+}$ & $C^{+}$ & 
$\gamma_{2}$ & $\gamma_{1}$ & -- & -- & $0$
\\ \hline 
$\gamma_{1}$ & $H_{f}$ & ncII & $C^{-}$ 
& $\gamma_{1}$ & $\gamma_{0}$ & $\gamma_{3}$, $\gamma_{4}$ & -- 
& $1$
\\ \hline 
$\gamma_{2}$ & $H_{f}$ & $C^{-}$ & ncII
& $\gamma_{0}$ & $\gamma_{2}$ & -- & $\gamma_{3}$, $\gamma_{5}$ 
& $1$
\\ \hline 
$\gamma_{3}$ & $H_{s}$ & rII & rII
& $\gamma_{4}$ & $\gamma_{5}$ & $\gamma_{1}$ & $\gamma_{2}$ 
& $2$
\\ \hline 
$\gamma_{4}$ & $H_{s}$ & rII & rn 
& $\gamma_{3}$ & $\gamma_{4}$ & $\gamma_{1}$ &  -- 
& $2$
\\ \hline 
$\gamma_{5}$ & $H_{s}$ & rn & rII
& $\gamma_{5}$ & $\gamma_{3}$ & -- & $\gamma_{2}$ 
& $2$
\\ \hline 
\end{tblr}
\end{table}

In the Grothendieck group, 
\begin{align*}
&
X(\gamma_{0}) = \overline{X}(\gamma_{0}), 
\qquad
X(\gamma_{1}) = \overline{X}(\gamma_{0}) + \overline{X}(\gamma_{1}), 
\qquad
X(\gamma_{2}) = \overline{X}(\gamma_{0}) + \overline{X}(\gamma_{2}), 
\\
&
X(\gamma_{3}) 
= 
\overline{X}(\gamma_{0})+\overline{X}(\gamma_{1})
+\overline{X}(\gamma_{2})+\overline{X}(\gamma_{3}), 
\\
&
X(\gamma_{4})
= 
\overline{X}(\gamma_{0})+\overline{X}(\gamma_{1})+\overline{X}(\gamma_{4}), 
\qquad
X(\gamma_{5})
= 
\overline{X}(\gamma_{0})+\overline{X}(\gamma_{2})+\overline{X}(\gamma_{5}).
\end{align*}
Note that large irreducible modules are $\overline{X}(\gamma_{0})$ and 
$\overline{X}(\gamma_{0'})$. 
Since the block $PSU(3)$ consists of $\overline{X}(\gamma_{0'})$, 
its injective envelope is isomorphic to itself; 
$E(\overline{X}(\gamma_{0'})) \simeq \overline{X}(\gamma_{0'})$. 

The socle filtration of $E(\overline{X}(\gamma_{0}))$ is given by 
\begin{equation}
\label{eq:socle filtration of E(X(gamma_0)), PSU(2,1)}
E(\overline{X}(\gamma_{0})) 
\simeq 
\begin{matrix}
\overline{X}(\gamma_{0})
\\
\overline{X}(\gamma_{1}) 
\oplus \overline{X}(\gamma_{2}) 
\\
\overline{X}(\gamma_{0})
\oplus 
\overline{X}(\gamma_{3}) 
\oplus \overline{X}(\gamma_{4}) \oplus \overline{X}(\gamma_{5}) 
\\
\overline{X}(\gamma_{1})
\oplus \overline{X}(\gamma_{2}) 
\\
\overline{X}(\gamma_{0})
\end{matrix}. 
\end{equation}


\noindent
\textbf{IV. $G = G_{2}$ (split, linear).}

In the group split $G_{2}$, 
there are four conjugacy classes of Cartan subgroups. 
The fundamental Cartan subgroup $H_{f}$ is a compact Cartan subgroup. 
Between $H_{f}$ and the split Cartan subgroup $H_{s}$, 
there are two Cartan subgroups. 
One is the Cayley transform of the compact Cartan subgroup 
through a short non-compact imaginary root. 
We call it $H_{1} = T_{1} A_{1}$. 
The other intermediate Cartan subgroup is the Cayley transform of 
the compact Cartan subgroup through a long non-compact imaginary
root. 
We call it $H_{2} = T_{2} A_{2}$. 

There are three $K$-conjugacy classes of regular characters of $H_{f}$, 
which we call $\gamma_{0}$, $\gamma_{1}$ and $\gamma_{2}$. 
There are three $K$-conjugacy classes of regular characters 
of $H_{1}$, 
which we call $\gamma_{3}$, $\gamma_{6}$ and $\gamma_{7}$. 
There are three $K$-conjugacy classes of regular characters 
of $H_{2}$, 
which we call $\gamma_{4}$, $\gamma_{5}$ and $\gamma_{8}$. 
There are four $K$-conjugacy classes of regular characters 
of $H_{s}$, 
which we call $\gamma_{9}$, $\gamma_{10}$, $\gamma_{11}$ and 
$\gamma_{0'}$. 
$\gamma_{0'}$ is contained in the block $G_{2}$(compact) 
and other regular characters are all contained in the block 
$G_{2}$(split). 
Among them, the regular characters which correspond to large
irreducible modules are $\gamma_{0}$ and $\gamma_{0'}$. 
$\overline{X}(\gamma_{0'})$
is an irreducible principal series, so 
$E(\overline{X}(\gamma_{0'})) \simeq \overline{X}(\gamma_{0'})$. 

Let $\alpha_{1}$ be the short simple root and $\alpha_{2}$ the
long simple root. 
Table~\ref{table:G2} is the data
on the regular characters in the block $G_{2}$(split). 
\begin{table}[h]
\centering
\caption{Data on the regular characters in the block $G_{2}$(split)}
\label{table:G2}
\begin{tblr}{|c|c|c|c|c|c|c|c|c|}
\hline
\SetCell[r=2]{c} 
& \SetCell[r=2]{c} CSG & \SetCell[c=2]{c} simple root 
& & 
\SetCell[c=2]{c} cross action 
& & 
\SetCell[c=2]{c} Cayley transf.  
&
& \SetCell[r=2]{c} length 
\\ \hline
& & $\alpha_{1}$ & $\alpha_{2}$ 
& $s_{1} \times *$ & $s_{2} \times *$ & 
$\alpha_{1}$ & $\alpha_{2}$ & 
\\ \hline 
$\gamma_{0}$ & $H_{f}$ & ncI & ncI & 
$\gamma_{1}$ & $\gamma_{2}$ & $\gamma_{3}$ & $\gamma_{4}$ & $0$
\\ \hline 
$\gamma_{1}$ & $H_{f}$ & ncI & cpt & 
$\gamma_{0}$ & $\gamma_{1}$ & $\gamma_{3}$ & -- & $0$
\\ \hline 
$\gamma_{2}$ & $H_{f}$ & cpt & ncI 
& $\gamma_{2}$ & $\gamma_{0}$ & -- & $\gamma_{4}$ 
& $0$
\\ \hline 
$\gamma_{3}$ & $H_{1}$ & rI & $C^{+}$ 
& $\gamma_{3}$ & $\gamma_{6}$ & $\gamma_{0}$, $\gamma_{1}$ & -- 
& $1$
\\ \hline 
$\gamma_{4}$ & $H_{2}$ & $C^{+}$ & rI 
& $\gamma_{5}$ & $\gamma_{4}$ & -- & $\gamma_{0}$, $\gamma_{2}$ 
& $1$
\\ \hline 
$\gamma_{5}$ & $H_{2}$ & $C^{-}$ & $C^{+}$ 
& $\gamma_{4}$ & $\gamma_{8}$ & -- & --
& $2$
\\ \hline 
$\gamma_{6}$ & $H_{1}$ & $C^{+}$ & $C^{-}$ 
& $\gamma_{7}$ & $\gamma_{3}$ & -- & --
& $2$
\\ \hline 
$\gamma_{7}$ & $H_{1}$ & $C^{-}$ & ncII 
& $\gamma_{6}$ & $\gamma_{7}$ & -- & $\gamma_{9}$, $\gamma_{11}$ 
& $3$
\\ \hline 
$\gamma_{8}$ & $H_{2}$ & ncII & $C^{-}$ 
& $\gamma_{8}$ & $\gamma_{5}$ & $\gamma_{9}$, $\gamma_{10}$ & --
& $3$
\\ \hline 
$\gamma_{9}$ & $H_{s}$ & rII & rII 
& $\gamma_{10}$ & $\gamma_{11}$ & $\gamma_{8}$ & $\gamma_{7}$ 
& $4$
\\ \hline 
$\gamma_{10}$ & $H_{s}$ & rII & rn 
& $\gamma_{9}$ & $\gamma_{10}$ & $\gamma_{8}$ & -- 
& $4$
\\ \hline 
$\gamma_{11}$ & $H_{s}$ & rn & rII
& $\gamma_{11}$ & $\gamma_{9}$ & -- & $\gamma_{7}$ 
& $4$
\\ \hline 
\end{tblr}
\end{table}

In the Grothendieck group, 
\begin{align*}
&
X(\gamma_{j}) = \overline{X}(\gamma_{j}), \ (j=0,1,2)
\\
&
X(\gamma_{3}) 
= \overline{X}(\gamma_{0}) + \overline{X}(\gamma_{1}) 
+ \overline{X}(\gamma_{3}), 
\qquad
X(\gamma_{4}) 
= \overline{X}(\gamma_{0}) + \overline{X}(\gamma_{2}) 
+ \overline{X}(\gamma_{4}), 
\\
&
X(\gamma_{5}) 
= 
\overline{X}(\gamma_{0})+\overline{X}(\gamma_{3})
+\overline{X}(\gamma_{4})+\overline{X}(\gamma_{5}), 
\\
&
X(\gamma_{6}) 
= 
\overline{X}(\gamma_{0})+\overline{X}(\gamma_{3})
+\overline{X}(\gamma_{4})+\overline{X}(\gamma_{6}), 
\\
&
X(\gamma_{7})
= 
\sum_{j=0, \not=1}^{7} \overline{X}(\gamma_{j}), 
\quad
X(\gamma_{8})
= 
\sum_{j=0, \not= 2, 7}^{8}\overline{X}(\gamma_{j}),
\quad 
X(\gamma_{9}) 
= 
\sum_{j=0}^{9} \overline{X}(\gamma_{j}), 
\\
& 
X(\gamma_{10}) 
= 
\overline{X}(\gamma_{0})+\overline{X}(\gamma_{1})
+2*\overline{X}(\gamma_{3})
+
\sum_{j=4}^{6} \overline{X}(\gamma_{j})
+\overline{X}(\gamma_{8}) 
+\overline{X}(\gamma_{10}),
\\
& 
X(\gamma_{11}) 
= 
\overline{X}(\gamma_{0})+\overline{X}(\gamma_{2})
+\overline{X}(\gamma_{3})+2*\overline{X}(\gamma_{4})
+
\sum_{j=5}^{7} \overline{X}(\gamma_{j})
+\overline{X}(\gamma_{11}). 
\end{align*} 
The socle filtration of $E(\overline{X}(\gamma_{0}))$ is given by 
\begin{equation}
\label{eq:socle filtration of E(X(gamma_0)), G2(split)}
E(\overline{X}(\gamma_{0})) 
\simeq 
\begin{matrix}
\overline{X}(\gamma_{0})
\\
\overline{X}(\gamma_{3}) 
\oplus \overline{X}(\gamma_{4}) 
\\
\overline{X}(\gamma_{1})
\oplus \overline{X}(\gamma_{2}) 
\oplus 
\overline{X}(\gamma_{5})
\oplus 
\overline{X}(\gamma_{6})
\\
\overline{X}(\gamma_{3}) 
\oplus 
\overline{X}(\gamma_{4}) 
\oplus 
\overline{X}(\gamma_{7}) 
\oplus 
\overline{X}(\gamma_{8}) 
\\
\overline{X}(\gamma_{0})
\oplus 
\overline{X}(\gamma_{5}) 
\oplus 
\overline{X}(\gamma_{6}) 
\oplus 
\overline{X}(\gamma_{9}) 
\oplus 
\overline{X}(\gamma_{10}) 
\oplus 
\overline{X}(\gamma_{11}) 
\\
\overline{X}(\gamma_{3}) 
\oplus 
\overline{X}(\gamma_{4}) 
\oplus 
\overline{X}(\gamma_{7}) 
\oplus 
\overline{X}(\gamma_{8}) 
\\
\overline{X}(\gamma_{1})
\oplus \overline{X}(\gamma_{2}) 
\oplus 
\overline{X}(\gamma_{5})
\oplus 
\overline{X}(\gamma_{6})
\\
\overline{X}(\gamma_{3}) 
\oplus \overline{X}(\gamma_{4}) 
\\
\overline{X}(\gamma_{0})
\end{matrix}. 
\end{equation}



\begin{thebibliography}{99} 


\bibitem{van den Ban} 
van den Ban, E. P.: 
Uniform temperedness of Whittaker integrals for a real reductive
group. 
Preprint. 



\bibitem{Casselman}
Casselman, W.: 
Canonical extensions of Harish-Chandra modules to representations of G.
Canad. J. Math. \textbf{41} (1989), no. 3, 385--438. 
MR1013462 (90j:22013)

\bibitem{CHM} Casselman,W. ; Hecht, H. ; Mili\v{c}i\'{c}, D.  : 
Bruhat filtrations and Whittaker vectors for real groups. 
Proceedings of Symposia in Pure Mathematics 
\textbf{68} (2000), 151--190. MR1767896 (2002b:22023)

\bibitem{GW} Goodman, R.; Wallach, N. R.:
Whittaker Vectors and Conical Vectors. 
J. Funct. Analysis \textbf{39} (1980), 199--279. 
MR0597811 (82i:22018)

\bibitem{HTY} 
Hashimoto, N.; Taniguchi, K.; Yamanaka, G.: 
The socle filtrations of principal series representations 
of $\mathrm{SL}(3,\mathbf{R})$ and $\mathrm{Sp}(2,\mathbf{R})$. 
Tokyo J. Math. \textbf{47} (2024), no. 1, 189--243. 
MR4787891

\bibitem{Jac} Jacquet, H.: 
Fonctions de Whittaker associ\'{e}es aux groupes de Chevalley. 
Bull. Soc. Math. France \textbf{95} (1967), 243--309. 
MR0271275 (42 \#6158)

\bibitem{Jan} Jantzen, J. C. : 
{\it Representations of algebraic groups}.
Second edition
Math. Surveys Monogr., 107
American Mathematical Society, Providence, RI, 2003. xiv+576 pp.
MR2015057 (2004h:20061)

\bibitem{Kashiwara-Oshima} 
Kashiwara, M. ; Oshima, T. : 
Systems of differential equations with regular singularities and their
boundary value problems. 
Ann. of Math. \textbf{106} (1977), 145--200. 
MR0482870 (58 \#2914)

\bibitem{KV} 
Knapp, W. A. ; Vogan Jr, D. A. : 
{\it Cohomological Induction and Unitary Representations.} 
Princeton Mathematical Series, 45. 
 Princeton University Press, Princeton, NJ, 1995, xvii +948 pp. 
MR1330919 (96c:22023)

\bibitem{Kos} Kostant, B. : 
On Whittaker Vectors and Representation Theory.
Invent. Math. \textbf{48} (1978), no. 2, 101--184.
MR507800 (80b:22020)

\bibitem{Lynch} 
Lynch, T. E.: 
Generalized Whittaker vectors and representation theory. 
Thesis, MIT, 1979. 


\bibitem{M0}
Matumoto, H. : 
Boundary value problems for Whittaker functions on real split
semisimple Lie groups. 
Duke Math. J. \textbf{53} (1986), no. 3, 635--676. 
MR0860664 (88b:22010)

\bibitem{M1}
Matumoto, H. : 
Whittaker vectors and the Goodman-Wallach operators, 
Acta math. \textbf{161} (1988), 183--241. 
MR0971796 (90d:22018)

\bibitem{M2}
Matumoto, H. :
$C^{-\infty}$-Whittaker vectors corresponding to a principal nilpotent
orbit of a real reductive linear Lie group, and wave front set, 
Compositio Math. \textbf{82} (1992), 189--244. 
MR1157939 (93c:22026)

\bibitem{Oshima ASPM1984}
Oshima, T. : 
Boundary value problems for systems of linear partial differential
equations with regular singularities. 
Group representations and systems of differential equations 
(Tokyo, 1982), 391--432, Adv. Stud. Pure Math. \textbf{4}, North-Holland,
Amsterdam, 1984. MR0810637 (87c:58121)

\bibitem{OS}
Oshima, T.; Sekiguchi, J.: Eigenspaces of invariant differential
operators on an affine symmetric spaces. 
Invent. Math. \textbf{57} (1980), 1--81. 
MR0564184 (81k:43014)

\bibitem{Speh-Vogan}
Speh, B.; Vogan, D. A. Jr.:
Reducibility of generalized principal series representations.
Acta Math. \textbf{145} (1980), no. 3-4, 227--299. 
MR0590291 (82c:22018)

\bibitem{Taniguchi 2013} 
Taniguchi, K.: 
On the Composition Series of the Standard Whittaker $\brgK$-modules. 
Trans. A.M.S. \textbf{365} (2013), no. 7, 3899--3922. 
MR3042608

\bibitem{Taniguchi 2015}
Taniguchi, K.: 
Socle filtrations of the standard Whittaker $\brgK$-modules of
$\mathrm{Spin}(r,1)$. 
Kyoto J. Math. \textbf{55} (2015), no. 1, 43--61.
MR3323527

\bibitem{Vogan GK-dim} 
Vogan, D. A.: 
Gelfand-Kirillov Dimension for Harish-Chandra Modules. 
Inventiones mathematicae \textbf{48}(1978), 75--98. 
MR0506503 (58 \#22205)

\bibitem{Vogan Green} 
Vogan, D. A.: 
{\it Representations of real reductive Lie groups}. 
Progress in Mathematics, 15. Birkha\"{u}ser, Boston, Mass., 1981.
MR0632407 (83c:22022)

\bibitem{Vogan PM40} 
Vogan, D. A.: 
The Kazhdan-Lusztig conjecture for real reductive groups. 
Representation theory of reductive groups 
(Park City, Utah, 1982), 
223--264.
Progress in Mathematics, 40. 
Birkha\"{u}ser, Boston, Mass., 1983. 
MR0733817 (85g:22028)

\bibitem{W} 
Wallach, N. R. : 
Asymptotic expansions of generalized matrix entries of representations
of real reductive groups, 
Lie group representations, I, 287--369, Lecture Notes in
Math., \textbf{1024}, Springer Verlag, Berlin, 1983. 
MR0727854 (85g:22029)

\bibitem{Wallach real reductive I} 
Wallach, N. R. : 
{\it Real Reductive Groups I}. 
Pure Appl. Math., 132. 
Academic Press, Inc., Boston, MA, 1988. xx+412 pp. 
MR0929683 (89i:22029)

\bibitem{Wallach real reductive II} 
Wallach, N. R. : 
{\it Real Reductive Groups II}. 
Pure Appl. Math., 132-II. 
Academic Press, Inc., Boston, MA, 1992. xiv+454 pp.
MR1170566 (93m:22018)

\end{thebibliography}
\end{document}